\newtheorem{thm}{Theorem}[section]
\newtheorem{prop}[thm]{Proposition}
\newtheorem{lem}[thm]{Lemma}
\newtheorem{cor}[thm]{Corollary}
\newtheorem*{asm*}{Assumptions}
\newtheorem{op}[thm]{Question}
\theoremstyle{remark}
\newtheorem{rem}[thm]{Remark}
\newtheorem*{rem*}{Remark}
\theoremstyle{definition}
\newtheorem{ex}[thm]{Example}
\newcommand{\ra}{\rightarrow}
\newcommand{\Ra}{\Rightarrow}
\newcommand{\N}{\mathbb N}     
\newcommand{\R}{\mathbb R}     
\newcommand{\Z}{\mathbb Z}     
\newcommand{\cal}{\mathcal}
\renewcommand{\a}{\alpha}
\renewcommand{\b}{\beta}
\renewcommand{\d}{\delta}
\newcommand{\e}{\varepsilon}
\renewcommand{\l}{\lambda}
\newcommand{\s}{\sigma}
\renewcommand{\epsilon}{\varepsilon}
\newcommand{\bigo}{\mathcal{O}}
\newcommand{\fl}[1]{\lfloor #1 \rfloor}  
\newcommand{\ind}[1]{ \mathbf{1}_{ \{ #1 \} } } 
\DeclarePairedDelimiter{\ceil}{\lceil}{\rceil}
\DeclarePairedDelimiter{\floor}{\lfloor}{\rfloor}
\newcommand{\w}{\omega}              
\renewcommand{\P}{\mathbb{P}}        
\newcommand{\E}{\mathbb{E}}          
\newcommand{\vp}{v_0}                
\DeclareMathOperator{\Var}{Var}
\DeclareMathOperator{\Cov}{Cov}
\DeclareMathOperator{\sgn}{sign}
\newcommand{\bp}{\mathbf{p}}
\newcommand{\br}{\mathbf{r}}
\newcommand{\one}{\mathbf{1}}
\newcommand{\ve}[1]{\mathbf{#1}}
\title[Excited random walks]{Excited random walks with Markovian cookie stacks}
\author{Elena Kosygina}
\address{Elena Kosygina\\One Bernard Baruch Way \\ Department of Mathematics, Box B6-230 \\ Baruch College \\ New York, NY 10010 \\ USA}
\email{elena.kosygina@baruch.cuny.edu}
\urladdr{www.baruch.cuny.edu/math/facultystaff/ekosygina/kosygina.html}
\thanks{E.\,Kosygina was partially supported by the Simons Foundation through a Collaboration Grant for Mathematicians \#209493 and Simons Fellowship in Mathematics, 2014-2015. She also thanks Institut Mittag-Leffler for support and stimulating research environment.}
\author{Jonathon Peterson}
\address{Jonathon Peterson\\Purdue University\\Department of Mathematics\\150 N University Street\\West Lafayette, IN  47907\\USA}
\email{peterson@purdue.edu}
\urladdr{www.math.purdue.edu/~peterson}
\thanks{J. Peterson was partially supported by NSA grant H98230-13-1-0266.}
\subjclass[2010]{Primary 60K37; Secondary 60F05, 60J10, 60J15, 60K35}
\keywords{Excited random walk, diffusion approximation, stable limit laws, random environment, branching-like processes}
\begin{document}

\begin{abstract}
We consider a nearest-neighbor random walk on $\mathbb{Z}$
whose probability $\omega_x(j)$ to jump to the right from site $x$
depends not only on $x$ but also on the number of prior visits $j$ to
$x$. The collection $(\omega_x(j))_{x\in\mathbb{Z},j\ge 1}$ is
sometimes called the ``cookie environment'' due to the following
informal interpretation. Upon each visit to a site the walker eats a cookie
from the cookie stack at that site and chooses the transition
probabilities according to the ``strength'' of the cookie eaten.
We assume that the cookie stacks are i.i.d.\ and that the cookie
``strengths'' within the stack $(\omega_x(j))_{j\ge 1}$ at site $x$
follow a finite state Markov chain. Thus, the
environment at each site is dynamic, but it evolves according to the
local time of the walk at each site rather than the original random
walk time.  

The model admits two different regimes, critical or non-critical,
depending on whether the expected probability to jump to the right (or
left) under the invariant measure for the Markov chain is equal to
$1/2$ or not. We show that in the non-critical regime the walk is
always transient, has non-zero linear speed, and satisfies the
classical central limit theorem. The critical regime allows for a much
more diverse behavior. We give necessary and sufficient conditions for
recurrence/transience and ballisticity of the walk in the critical
regime as well as a complete characterization of limit laws under the
averaged measure in the transient case.

The setting considered in this paper generalizes the previously
studied model with periodic cookie stacks \cite{kosERWPC}. Our results
on ballisticity and limit theorems are new even for the periodic
model.
\end{abstract}

\maketitle

\section{Introduction}

Excited random walks (ERWs, also called cookie random walks) are a
model for a self-interacting random
motion where the self-interaction is such that the transition
probabilities for the next step of the random walk depend on the local
time at the current location.  To make this precise, a \emph{cookie environment}
$\w = \{ \w_x(j) \}_{x\in \Z, \, j\geq 1}$ is an element of
$\Omega = [0,1]^{\Z\times \N}$, and given a cookie environment
$\w \in \Omega$ and $x \in \Z$, the ERW in the cookie
environment $\w$ started at $x$ is a stochastic process
$\{X_n\}_{n\geq 0}$ with law $P_\w^x$ such that $P_\w^x(X_0 = x)=1$ and
\begin{align*}
 P_\w^x\left( X_{n+1} = X_n+1 \, | \, X_0,X_1,\dots,X_n \right)
&= 1 - P_\w^x\left( X_{n+1} = X_n - 1 \, | \, X_0,X_1,\dots,X_n \right) \\
&= \w_{X_n}( \#\{k\leq n:\, X_k = X_n\} ).
\end{align*}
The ``cookie'' terminology in the above description of ERWs comes from the following interpretation. One imagines a stack of cookies at each site $x\in \Z$. Upon the $j$-th visit to the site $x$ the random walker eats the $j$-th cookie at that site which induces a step to the right with probability $\w_x(j)$ and to the left with probability $1-\w_x(j)$. For this reason we will refer to $\w_x(j)$ as the \emph{strength} of the $j$-th cookie at site $x$. 
 
Many results for ERWs in one dimension were obtained
under the assumption that there is an $M<\infty$ such that
$\w_x(j) = 1/2$ for all $j>M$ and $x\in\Z$
\cite{bwERW,mpvCRWspeed,bsCRWspeed,bsRGCRW,kzPNERW,kmLLCRW,dkSLRERW,pERWLDP,pCRWMnt,aoEM,pXSDERW}.
We will refer to this as the case of ``boundedly many cookies per
site''.  
Much less is known if the number of cookies per site is not bounded (in particular, if there are infinitely many cookies at each site).   
Notable exceptions are that Zerner
\cite{zMERW} proved a criterion for recurrence/transience and
Dolgopyat \cite{dCLTERW} proved the scaling limits for the recurrent
case under the assumption that $\w_x(j) \geq 1/2$ for all $x\in \Z$
and $j\geq 1$ (i.e., all cookies have non-negative drift).  For a
review (prior to 2012) of ERWs in one and more dimensions see
\cite{kzERWsurvey}.

Until recently, very little was known for ERWs which had infinitely
many cookies per site with both positive and negative drift
cookies.\footnote{With the exception of one-dimensional random walk in
  random environment which can be seen as a particular case of ERW
  where $\w_x(j) = \w_x(1)$ for all $j\geq 1$ at each $x\in \Z$.}
However, \cite{kosERWPC} gave an explicit criterion for
recurrence/transience of ERWs in cookie environments such that the
cookie sequence $\{\w_x(j)\}_{j\geq 1}$ is the same for all $x$ and is
periodic in $j$.  The results in the current paper cover more general cookie environments where the cookie
stack at each site is given by a finite state Markov chain. 
Moreover, we add to
the results of \cite{kosERWPC} by also proving a criterion for
ballisticity (non-zero limiting velocity) and limiting distributions
in the transient case.  We also note that our model is general enough
to include the case of periodic cookie 
stacks as in \cite{kosERWPC} as well as some cases of
boundedly many cookies per site.

\subsection{Description of the model}

Let $\mathcal{R} = \{1,2,\ldots,N\}$ denote a finite state space, and
let $\{R_j\}_{j\geq 1}$ be a Markov chain on $\mathcal{R}$ with
transition probabilities given by an $N\times N$ matrix
$K=(K_{i,i'})_{i,i' \in \mathcal{R}}$.  We will assume that this
Markov chain has a unique closed irreducible subset
$\mathcal{R}_0 \subseteq \mathcal{R}$.  Thus, there is a unique
stationary distribution $\mu$ for the Markov chain
$\{R_j\}_{j\geq 1}$, and $\mu$ is supported on
  ${\cal R}_0$.  Throughout the paper we will often
represent probability distributions
$\eta$ on $\mathcal{R}$ as row vectors
$\eta = (\eta(1),\eta(2),\ldots,\eta(N))$, so that for the stationary distribution $\mu$ we have
$\mu K = \mu$.

\begin{asm*}[i.i.d., elliptic, Markovian cookie stacks]
  Let $\{\mathbf{R}^x\}_{x\in\Z}$ be an i.i.d.\ family of Mar\-kov
  chains $\mathbf{R}^x = \{R_j^x\}_{j\geq 1}$ with transition matrix
  $K$. We assume that the cookie environment $\w$ is given by
  $\w_x(j) = p(R_j^x)$ for some fixed function
  $p:\mathcal{R} \ra (0,1)$. The requirement that $p$ is strictly
  between $0$ and $1$ will be referred to as ellipticity throughout
  the paper.
\end{asm*}
For any distribution $\eta$ on $\mathcal{R}$, let $\P_\eta$ denote the
distribution of $\{R^x_j\}_{x\in \Z,j\ge 1}$ when each $R_1^x$,
$x\in\Z$, has distribution $\eta$. With a slight abuse of notation
$\P_\eta$ will also be used for the induced distribution on
environments $\w$ which is constructed as in the assumptions above.
If $\eta$ is concentrated on a single state $i\in{\cal R}$, i.e.\ if $\eta=\delta_i$, we shall use $\P_i$ instead of $\P_{\delta_i}$.
Expectations with respect to $\P_\eta$ or $\P_i$
will be denoted by $\E_\eta$ and $\E_i$, respectively.

As stated above, the law of the ERW in a
fixed cookie environment $\omega$ is denoted
$P_\w^x$. We will refer to this as the \emph{quenched} law. If the
cookie environment has distribution $\P_\eta$, then we can also define
the \emph{averaged} law of the ERW by
$P_\eta^x(\cdot) = \E_\eta[ P_\w^x(\cdot) ]$.  Expectations with
respect to the quenched and averaged measures will be denoted by
$E_\w^x$ and $E_\eta^x$, respectively.  Again, if $\eta$ is concentrated on
$i \in \mathcal{R}$ we shall set
$P_i^x(\cdot): = \E_i[ P_\w^x(\cdot)]$.  We will often be interested
only in ERWs started at $X_0 = 0$, and so we will use the notation
$P_\w$, $P_\eta$, or $P_i$ in place of $P_\w^0$, $P_\eta^0$ or
$P_i^0$, respectively.

\begin{ex}[Periodic cookie sequences]\label{ex:period1}
  The model above clearly generalizes the case of periodic cookie
  sequences at each site as in \cite{kosERWPC}.  To obtain periodic
  cookie sequences set $K_{i,i+1} = 1$ for $i=1,\ldots,N-1$,
  $K_{N,1} = 1$, and let $\eta = \d_1$.
\end{ex}

\begin{ex}[Cookies stacks of geometric height]\label{geom}
  Fix $\alpha\in(0,1]$, $p(1)>1/2$, and $p(2)= 1/2$. Let
  $K = \begin{pmatrix} 1-\a & \a \\ 0 & 1 \end{pmatrix}$. If
  $\eta = \d_1$ then the cookie environment is such that there are an
  i.i.d.\ Geometric($\a$) number of cookies of strength $p(1)>1/2$ at
  each site.
\end{ex}

\begin{ex}[Bounded cookie stacks]
The model above also generalizes the case of finitely many cookies per site. For instance, let the transition matrix $K$ be such that $K_{i,i+1} = 1$ for $i=1,\ldots,N-1$ and $K_{N,N} = 1$, and let the function $p:\mathcal{R}\ra (0,1)$ be such that $p(N) = 1/2$. In this case, if $\eta=\delta_1$ then  $\w_x(j) = p(j)$ if $j\leq N-1$ and $p(j) = 1/2$ if $j\geq N$. 
With a little more thought, one can also obtain random cookie environments that are i.i.d.\ spatially with a bounded number of cookies at each site as in \cite{kzPNERW} subject to the restrictions that there are only finitely many possible values for the cookie strengths $\w_x(j)$ and that $\w_x(j)\in(0,1)$. 
For instance, if 
\[
 K= \begin{pmatrix}
     0 & 1 & 0 & 0 & 0 \\
     0 & 0 & 0 & 0 & 1 \\
     0 & 0 & 0 & 1 & 0 \\
     0 & 0 & 0 & 0 & 1 \\
     0 & 0 & 0 & 0 & 1
    \end{pmatrix}, 
\quad
\mathbf{p} = \begin{pmatrix} p(1) \\ p(2) \\ p(3) \\ p(4) \\ 1/2 \end{pmatrix}, 
\quad\text{and}\quad
\eta = (\a,0,1-\a,0,0), 
\]
then the corresponding cookie environments under the distribution
$\P_\eta$ have 2 cookies at each site, having stack $(p(1),p(2))$ with
probability $\a$ and stack $(p(3),p(4))$ with probability $1-\a$. 
\end{ex}

Before stating our main results, we note two basic facts for ERWs that
are known to hold in much more generality than our model.  We will use
these facts as needed throughout the paper.

\begin{thm}[{\cite[Theorem 1.2]{ABO14} and \cite[Theorem 4.1]{kzERWsurvey}}]\label{th:facts}
\
 \begin{enumerate}
  \item \textbf{Zero-one law for transience.} \label{th:01law}
  For any initial distribution $\eta$ on
  ${\cal R}$ \[P_\eta\left(\lim_{n\to\infty}X_n=\infty\right),\
  P_\eta\left(\lim_{n\to\infty}X_n=-\infty\right)\in\{0,1\}.\]
 \item \textbf{Strong law of large numbers:} \label{th:slln}
   For any initial distribution $\eta$ on ${\cal R}$ there is a
  deterministic $\vp\in[-1,1]$ such
  that\[P_\eta\left(\lim_{n\to\infty}X_n/n=\vp \right)=1.\]
 \end{enumerate}

\end{thm}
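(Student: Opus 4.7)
The plan is to prove (i) and (ii) using the i.i.d.\ structure of the cookie stacks across sites, following the approach developed by Zerner for ERW with stationary ergodic environments.

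For the zero-one law, set $q_+ := P_\eta(\lim X_n = \infty)$ and $T_n := \inf\{k : X_k = n\}$. Conditioning on $\{T_n < \infty\}$, the strong Markov property expresses the walk from time $T_n$ onwards as an ERW in the shifted environment, in which the cookie stacks at sites $\ge n$ are still fresh and, by spatial i.i.d., independent of the trajectory up to $T_n$. On $\{\lim X_k = \infty\}$ the walk makes only finitely many visits to sites $\le n-1$ after $T_n$, so the partial depletion of the stacks there can be coupled out at negligible cost using ellipticity. This yields $P_\eta(\lim X_k = \infty \mid T_n < \infty) \to 1$ as $n \to \infty$ whenever $q_+ > 0$, which combined with the monotone inequality $q_+ \le P_\eta(T_n < \infty)$ forces $q_+ \in \{0,1\}$. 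The statement for $\lim X_n = -\infty$ is symmetric.

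For the SLLN, I would pass to hitting times: on $\{\lim X_n = \infty\}$, $X_n/n \to v$ iff $T_n/n \to 1/v$. Writing $T_n = \sum_{k=1}^n U_k$ with $U_k := T_k - T_{k-1}$, the increments are not i.i.d.\ (the cookie stack at level $k-1$ may have been partially consumed by backward excursions from higher levels), but by augmenting the state with both the spatial configuration of fresh stacks to the right of the current maximum and the Markov-chain state of each partially-depleted stack behind it, one obtains a process $(\Psi_k, U_{k+1})_{k \ge 0}$ that is stationary under $P_\eta$ and ergodic by virtue of the spatial i.i.d.\ assumption. Birkhoff's theorem gives $T_n/n \to \lambda := E_\eta[U_1] \in [1,\infty]$ a.s.\ on $\{q_+ = 1\}$, and inversion yields $X_n/n \to 1/\lambda$ (with $1/\infty := 0$). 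The case $q_- := P_\eta(\lim X_n = -\infty) = 1$ is symmetric, and in the remaining recurrent case a direct ergodic argument on the excursions from the origin shows $X_n/n \to 0$ a.s.

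The main obstacle is constructing the correct state space for the ``environment seen from the front'' so that the hitting-time increments form a stationary ergodic sequence. Because the cookie stacks evolve as finite-state Markov chains (not as i.i.d.\ sequences) within each site, tracking only the spatial configuration of fresh stacks is insufficient; one must also record the Markov-chain state at which each behind-the-front stack has been interrupted, since the walk may return to those sites on later backward excursions. The hypothesis that each per-site chain has a unique closed irreducible class, hence a unique stationary measure $\mu$, is precisely what makes this enlarged environment process admit a well-defined stationary law; ellipticity then ensures that all the relevant hitting times are a.s.\ finite on the events in question.
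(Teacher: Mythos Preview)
The paper does not prove this theorem at all: it is stated with attribution to \cite[Theorem~1.2]{ABO14} and \cite[Theorem~4.1]{kzERWsurvey} in the theorem header, and the surrounding text makes clear these are quoted as known facts holding in greater generality than the model at hand. So there is no ``paper's own proof'' to compare against.

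As to your sketch on its own merits, both parts have genuine gaps. For the zero-one law, the crucial assertion is that $P_\eta(\lim_k X_k=\infty\mid T_n<\infty)\to 1$ when $q_+>0$, but you have not justified this. At time $T_n$ the stacks at sites $\ge n$ are fresh, which gives you a uniform \emph{lower} bound (independent of $n$) on the conditional probability of escaping to $+\infty$; it does not by itself give convergence to $1$. The actual arguments in the cited literature (e.g.\ Amir--Berger--Orenshtein) proceed differently, typically through a comparison/monotonicity step or a levelwise renewal that you have not supplied, and ``coupling out the partially depleted stacks at negligible cost using ellipticity'' is precisely the non-trivial part you are glossing over.

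For the SLLN, you correctly identify the obstacle and then do not overcome it: the increments $U_k=T_k-T_{k-1}$ are not stationary under $P_\eta$, and your proposed augmented state (recording the interrupted Markov-chain state at every site behind the front) is an infinite-dimensional object whose stationarity and ergodicity you assert but do not establish. The standard route in \cite{kzERWsurvey} goes instead through regeneration times (times at which the walk hits a new maximum and never returns below it), which give genuinely i.i.d.\ blocks once transience is known; your ``environment seen from the front'' construction would need substantially more work to be made rigorous here.
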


\subsection{Main results}

The function $p:\mathcal{R} \ra (0,1)$ used in the construction of the cookie environment $\w$ corresponds to a column vector $\mathbf{p} = (p(1),p(2),\ldots,p(N))^t$ with $i$-th entry given by $p(i)$. 
Note that our assumptions on the Markov chains
$\{\mathbf{R}^x\}_{x\in\Z}$, imply that the limiting average cookie
strength at each site is
\[
 \lim_{k\ra\infty} \frac{1}{k} \sum_{j=1}^k \w_x(j) = \mu \cdot \mathbf{p} =: \bar{p},
\]
and that this limit does not depend on the distribution $\eta$ of $\w_x(1)$. 
It is natural to suspect that the ERW is transient to the right (resp.\ left) when $\bar{p} > 1/2$ (resp.\  $\bar{p}<1/2$). Our first main result below confirms this is the case. Moreover, we also establish that if $\bar{p}\neq 1/2$ the walk has non-zero linear speed and  a Gaussian limiting distribution. 


\begin{thm}\label{th:noncrit}
 Assume that $\bar{p} \neq 1/2$. 
\begin{description}
  \item[Transience] If $\bar{p} > 1/2$ then $P_\eta( \lim_{n\ra\infty} X_n = \infty) = 1$ for any initial distribution $\eta$ on $\mathcal{R}$. Similarly, if $\bar{p} < 1/2$ then $P_\eta( \lim_{n\ra\infty} X_n = -\infty) = 1$ for any $\eta$. 
  \item[Ballisticity] For any initial distribution $\eta$ on
    $\mathcal{R}$ the limiting velocity $\vp$ (see
    Theorem~\ref{th:facts}\ref{th:slln}) is non-zero: it is positive for
    $\bar{p}>1/2$ and negative for $\bar{p}<1/2$.
  \item[Gaussian limit] For any distribution $\eta$ on $\mathcal{R}$ there exists a constant $b = b(K,\mathbf{p},\eta) > 0$ such that 
  \[
   \lim_{n\ra\infty} P_\eta\left( \frac{X_n - n \vp}{b \sqrt{n}} \leq x \right) = \Phi(x) := \int_{-\infty}^x \frac{1}{\sqrt{2\pi}} e^{-z^2/2} \, dz. 
  \]
\end{description}
\end{thm}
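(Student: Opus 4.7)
Assume without loss of generality that $\bar p > 1/2$; the case $\bar p < 1/2$ follows by reflection. Let $T_n = \inf\{m \ge 0 : X_m = n\}$ be the first hitting time of level $n$. The strategy is to prove (i) $T_n < \infty$ $P_\eta$-a.s., (ii) $T_n/n \ra 1/\vp$ $P_\eta$-a.s.\ for some $\vp \in (0, 1]$, and (iii) a Gaussian CLT for $T_n$; transience, ballisticity, and the CLT for $X_n$ then follow by the standard time-inversion sandwich $T_{X_n} \le n < T_{X_n + 1}$ combined with the monotonicity of $n \mapsto T_n$.

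\textbf{Backward branching-like chain.} For $k \in \Z$ let $D_k^n$ denote the number of traversals of the edge $(k, k+1)$ from $k+1$ to $k$ during $[0, T_n]$. Counting left versus right steps gives the identity $T_n = n + 2 \sum_{k \in \Z} D_k^n$ on $\{T_n < \infty\}$, so it is enough to analyze $\sum_k D_k^n$. Because the cookie stacks $\mathbf{R}^k$ at distinct sites are independent under $\P_\eta$, the sequence $(D_k^n)_{k \le n-1}$, indexed by decreasing $k$ and started at $D_{n-1}^n = 0$, is a time-homogeneous Markov chain on $\Zp$ whose transition kernel does not depend on $n$: given $D_k^n = j$, the variable $D_{k-1}^n$ is distributed as the number of left-outcomes (``failures'') before the $(j+1)$-th right-outcome (``success'') in a sequence of Bernoulli trials with success probabilities $p(R_1^k), p(R_2^k), \ldots$, where $\mathbf{R}^k$ is a Markov chain with transition matrix $K$ and initial distribution $\eta$.

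\textbf{Ergodicity of the chain.} The key analytic step is to show that this chain is positive recurrent with a unique stationary distribution $\pi$ having exponential moments. The drift heuristic is that for large $j$ the cookie chain $\mathbf{R}^k$ mixes to $\mu$ long before the walker's $(j+1)$-th right-step, so the conditional distribution of $D_{k-1}^n$ given $D_k^n = j$ is well-approximated by a negative binomial with $j+1$ successes and success probability $\bar p$; in particular one expects $\E[D_{k-1}^n \mid D_k^n = j] = (j+1)(1-\bar p)/\bar p + O(1)$. The multiplicative drift $(1-\bar p)/\bar p$ is strictly less than one precisely when $\bar p > 1/2$, so a Foster--Lyapunov inequality with a geometric test function $V(j) = \beta^j$ for suitable $\beta > 1$ should yield geometric ergodicity and the required exponential tails for $\pi$. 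This is the step I expect to be hardest: the Markovian correlation of the cookies couples $D_{k-1}^n$ with the full trajectory of $\mathbf{R}^k$ over a random horizon whose length depends on the outcomes themselves, and quantifying the approximation error uniformly in $j$ requires a careful use of the geometric convergence of $\mathbf{R}^k$ to $\mu$.

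\textbf{From ergodicity to the limit theorems.} Once ergodicity and exponential moments are established, the remaining steps are standard. Coupling $(D_k^n)$ started at $0$ with the stationary version and applying the Birkhoff ergodic theorem gives $n^{-1} \sum_{k=0}^{n-1} D_k^n \ra \E_\pi[D]$ $P_\eta$-a.s.; the contribution from $k < 0$ is almost surely bounded since transience (which is a byproduct of the ergodicity) forces the walk to make only finitely many excursions below $0$. Hence $T_n/n \ra 1 + 2 \E_\pi[D] < \infty$ a.s., which proves (i) and (ii) with $\vp = (1 + 2 \E_\pi[D])^{-1}$. A Gaussian CLT for $T_n$ follows from the standard CLT for additive functionals of geometrically ergodic Markov chains with exponential moments (e.g.\ via a Gordin martingale-coboundary decomposition), and the time-inversion sandwich converts it to the stated Gaussian CLT for $X_n$ with the appropriate rescaling of the variance.
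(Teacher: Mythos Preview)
Your approach is viable and genuinely different from the paper's. The paper does not attack the subcritical backward BLP head-on; instead it \emph{reduces} the non-critical case to the critical machinery already built. Concretely (Section~\ref{sec:noncrit}), given $\bar p>1/2$ it picks $\mathbf{p}_0\le\mathbf{p}$ with $\mu\cdot\mathbf{p}_0=1/2$, enlarges the state space so that each cookie stack switches from $\mathbf{p}$-cookies to $\mathbf{p}_0$-cookies after an independent Geometric$(\e)$ time, and shows that the resulting \emph{critical} BLP has parameter $\hat\delta_\e\to\infty$ as $\e\downarrow0$ (Lemma~\ref{depslim}). A monotone coupling of $(U,V)$ with $(\hat U^\e,\hat V^\e)$ then lets the paper invoke Theorems~\ref{th:fbp}--\ref{th:bbp} with $\hat\delta_\e>4$ to get $E_\eta^{V,0}[(\s_0^V)^2]<\infty$ and $E_\eta^{V,0}\big[\big(\sum_{i<\s_0^V}V_i\big)^2\big]<\infty$ for free. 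Your route is more self-contained: the contraction $E[V_1\mid V_0=j]\sim j(1-\bar p)/\bar p$ does yield a geometric Lyapunov drift (a Perron-eigenvalue calculation on the matrix $\Psi(\beta)=(I-\beta D_{1-p}K)^{-1}D_pK$ gives $\rho'(1)=(1-\bar p)/\bar p<1$, so $E_\eta[\beta^{V_1}\mid V_0=j]\le C\rho(\beta)^j$ with $\rho(\beta)<\beta$ for some $\beta>1$), and from there the ergodic theorem and the Markov-chain CLT are indeed standard. The paper's route buys reuse of the delicate critical-case analysis; yours buys independence from it.

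There is, however, a gap in how you deduce transience. You call it ``a byproduct of the ergodicity'' of $V$, but the identification of the abstract chain $V$ with the edge-local times $D_k^n$ (Lemma~\ref{lem:bbpDxn}) is valid only on $\{T_n<\infty\}$, so you cannot bootstrap $T_n<\infty$ from $V$ alone; in particular, positive recurrence of $V$ does not by itself exclude $X_n\to-\infty$. The paper obtains transience via the \emph{forward} BLP and the coupling $U\ge\hat U^\e$ together with Theorem~\ref{th:fbp}. You can repair your argument with the same Lyapunov tool: the left-excursion forward BLP (Remark~\ref{rem:lfbp}) has the same subcritical drift $(1-\bar p)/\bar p$ as $V$ and hence dies out a.s., so every left-excursion from the origin is finite and $X_n\to-\infty$ is impossible; recurrence is then ruled out because on $\{T_n<\infty\}$ the backward-BLP correspondence forces $T_n/n$ to be tight around $1+2E_\pi[V]<\infty$, contradicting $\vp=0$. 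But as written this step is missing.
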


The case $\bar{p} = 1/2$ is more interesting (we will refer to this as the critical case). 
In fact, it will follow from our results below that ERWs in the critical case $\bar{p}=1/2$ can exhibit the full range of behaviors that are known for ERWs with boundedly many cookies per site (e.g., transience with sublinear speed and non-Gaussian limiting distributions). 
Before stating our results for the critical case, we
remark that in the case of boundedly many cookies per site
the classification of the long term behavior of the ERW
  is based 
  on a single parameter
  $\d = \E_\eta[ \sum_{j\geq 1} (2\w_x(j) - 1) ]$ which is the
  expected total drift contained in a cookie stack.  Our results below
  depend on two parameters $\d$ and $\tilde\delta$ which are also
  explicit but do not admit such simple interpretation. However, if we
  denote by ${\cal E}^n_0$ the 
number of upcrossings (steps to the right) the ERW makes from $0$ before the $n$-th downcrossing (step to the left) from $0$
 then
    \[\delta=\delta(\eta,K,\mathbf{p}):=\frac{2\rho}{\nu},\quad \text{where } \rho:=\lim\limits_{n\to\infty} (\E_\eta[{\cal E}^n_0] - n) ,\quad  \text{and}\quad \nu:=\lim\limits_{n\to\infty}\frac{\mathrm{Var}_\eta({\cal E}^n_0)}{n}.\] 
    The parameter $\tilde{\delta}$ is defined in a symmetric way using
    downcrossings instead of upcrossings and vice versa so that
    $\tilde{\delta}:=\delta(\eta,K,\mathbf{1}-\mathbf{p})$.  The
    explicit formulas for  the parameters are not
    intuitive and are given in \eqref{parforms} and \eqref{eq:del}.
In the special case of boundedly many cookies per site the parameter $\d$ agrees with the one used in previous papers, $\tilde\d = -\d$, and $\nu=2$. Proposition \ref{deleq} below shows that in general  $\d + \tilde\d = 1-2/\nu$ where $\nu$ need not be equal to $2$ (see Example~\ref{ex:period2}), and so two parameters are needed in the general model.

\begin{thm}\label{th:rectran}
  Fix an arbitrary initial distribution $\eta$ on ${\cal R}$ and let
  $\bar{p} = \mu \cdot \mathbf{p} = 1/2$.  Then, there exist constants
  $\delta$ and $\tilde{\delta}$ satisfying $\delta+\tilde{\delta}<1$
  (see \eqref{parforms} and \eqref{eq:del} for the explicit formulas in terms of $\eta$,
  $K$, and $\bp$) and such that
\begin{itemize}
 \item if $\delta> 1$, then $P_\eta( \lim_{n\ra\infty} X_n = \infty) = 1$; 
 \item if $\tilde{\delta}> 1$, then
   $P_\eta( \lim_{n\ra\infty} X_n = -\infty) = 1$;
 \item if $\delta\leq 1$ and $\tilde{\delta}\leq 1$ then $P_\eta( \liminf_{n\ra\infty} X_n = -\infty, \, \limsup_{n\ra\infty} X_n = \infty) = 1$. 
\end{itemize}
\end{thm}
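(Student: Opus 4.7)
The plan is to reduce the recurrence/transience dichotomy to a survival-versus-extinction question for a suitable branching-like process associated with the ERW, following the by-now-standard strategy for one-dimensional ERWs but adapted to the Markovian cookie stacks.

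First, I would introduce the forward branching-like process of upcrossings. For $k\ge 0$ let $U_k^n$ be the number of crossings from $k$ to $k+1$ made by $X$ before its $n$-th step from $0$ to $-1$ (and $U_{-1}^n=n$). The walk escapes to $+\infty$ with positive (hence full, by Theorem~\ref{th:facts}\ref{th:01law}) probability iff $U_k^n$ fails to reach $0$ as $n\to\infty$ for some/all $k$. Unlike the bounded-cookies case, here $U_k^n$ alone is not Markov in $k$; instead one must carry along the state of the cookie Markov chain at site $k$ at the moment $X$ leaves $k$ for the last time before the $n$-th downcrossing from $0$. The enlarged process $Y_k^n=(U_k^n, S_k^n)$ on $\Z_{\ge 0}\times\mathcal R$ is, under $\P_\eta$, a genuine Markov chain in $k$, because the stacks at different sites are independent and the number of cookies consumed at site $k$ is determined by $U_{k-1}^n$ and $U_k^n$. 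The analogous backward process built from downcrossings governs transience to $-\infty$; it has exactly the same structure but with $\mathbf p$ replaced by $\mathbf 1-\mathbf p$, which is why $\tilde\delta=\delta(\eta,K,\mathbf 1-\mathbf p)$.

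Second, I would exploit the critical condition $\bar p = 1/2$ to show that the branching-like process is critical and admits a diffusion approximation. Fix $k$ and condition on $U_{k-1}^n=u$; the number of visits to $k$ is $u$ plus an $O(1)$ boundary term, and $U_k^n$ is the number of ``successes'' in this many trials of a Markov-modulated Bernoulli sequence. By the Markov renewal / CLT for additive functionals of the chain $R$, the conditional mean is $u + \rho + o(1)$ and the conditional variance is $\nu u + O(1)$, where $\rho$ and $\nu$ are the constants appearing in the definition of $\delta$. Consequently the rescaled process $\{U_{\lfloor kt\rfloor}^{kn}/k\}_{t\ge 0}$ converges, under diffusive scaling, to a squared-Bessel-type diffusion whose dimension parameter is exactly $2(1-\delta)$. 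Hitting zero for such a diffusion is determined by the classical dichotomy: extinction is certain iff the dimension is $\le 2$, i.e.\ iff $\delta\le 1$.

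Third, transferring this back to the discrete process, standard arguments (comparison with a Lamperti-type Markov chain on $\Z_+$ with drift $\rho/u$ and variance $\nu/2$ at state $u$, in the spirit of Lamperti's criteria) yield that $P_\eta(U_k^n\ge 1\ \forall n)>0$ for some $k$ precisely when $\delta>1$, and equals $0$ when $\delta\le 1$. By Theorem~\ref{th:facts}\ref{th:01law} this gives transience to $+\infty$ iff $\delta>1$. The symmetric statement for $\tilde\delta$ handles $-\infty$. The remaining case $\delta\le 1$ and $\tilde\delta\le 1$ rules out transience to either side; since the walk is not transient, it must be recurrent, and by standard arguments for nearest-neighbor walks on $\Z$ one concludes $\liminf X_n=-\infty$ and $\limsup X_n=+\infty$ almost surely. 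The inequality $\delta+\tilde\delta<1$ (needed so that the three cases are genuinely exhaustive and not contradictory) should follow from the algebraic identity $\delta+\tilde\delta = 1-2/\nu$ and from $\nu>2$, both of which come out of the explicit formulas.

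The main obstacle is the diffusion approximation step, because the offspring distribution of the branching-like process depends on the hidden Markov state $S_k^n$ of the cookie stack at the moment the walk leaves site $k$ — a state that is itself coupled to the process via the total number of visits $U_{k-1}^n+U_k^n$. To compute the limiting drift $\rho$ and variance $\nu$ correctly one must identify the stationary distribution of $S_k^n$ conditional on large $U_k^n$ and apply a Markov-renewal CLT with a random number of steps. This is precisely where the explicit formulas \eqref{parforms} and \eqref{eq:del} for $\delta$ and $\tilde\delta$ are forced upon us and where the paper will have to do the real work; once the critical branching-like diffusion limit is established, the classification is a matter of applying Lamperti-type criteria together with Theorem~\ref{th:facts}.
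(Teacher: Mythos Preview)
Your overall strategy matches the paper's: reduce recurrence/transience to survival/extinction of a forward branching-like process (BLP), identify its squared-Bessel scaling limit, and invoke the zero-one law (Theorem~\ref{th:facts}). The paper packages the BLP analysis as Theorem~\ref{th:fbp} and then derives Theorem~\ref{th:rectran} from it in a few lines via the identity $P_{\w^+}(\gamma_n<\infty)=P_\w^{U,n}(\sigma_0^U<\infty)$.

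Three points where your write-up goes astray:

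\textbf{The BLP is already Markov.} You claim $U_k^n$ alone is not Markov and that one must track the residual cookie state $S_k^n$. This is wrong in the present model: the cookie stacks are i.i.d.\ across sites, each with initial law $\eta$, so the transition $U_{i-1}\mapsto U_i$ (number of successes before the $U_{i-1}$-th failure in the Bernoulli sequence at site $i$) depends only on $U_{i-1}$ and a fresh independent stack. Under the averaged measure $P_\eta^{U,y}$ the process $\{U_i\}$ is a time-homogeneous Markov chain on $\Z_{\ge 0}$ without any enlargement. The state $S_k^n$ you propose to carry is the cookie state \emph{at site $k$} after the walk is done there; it has no bearing on what happens at site $k+1$. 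Dropping this unnecessary component is exactly what makes the mean/variance computations in Sections~\ref{sec:meanvar}--\ref{sec:param} tractable.

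\textbf{Wrong Bessel dimension.} The forward BLP has drift $\eta\cdot\mathbf r$ and variance coefficient $\nu$, so its diffusion limit is $dY=(\eta\cdot\mathbf r)\,dt+\sqrt{\nu Y}\,dW$, a squared Bessel process of generalized dimension $4(\eta\cdot\mathbf r)/\nu=2\delta$, not $2(1-\delta)$. (The dimension $2(1-\delta)$ belongs to the \emph{backward} BLP $V$; see \eqref{Bsqdim}.) Your stated dimension is inconsistent with your own conclusion: $2(1-\delta)\le 2$ is equivalent to $\delta\ge 0$, not $\delta\le 1$. With the correct dimension $2\delta$, the dichotomy ``hits $0$ iff dimension $<2$'' gives exactly $\delta<1$, with $\delta=1$ the delicate boundary case handled separately.

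\textbf{The inequality $\delta+\tilde\delta<1$.} You correctly identify the identity $\delta+\tilde\delta=1-2/\nu$ (Proposition~\ref{deleq}), but the conclusion needs only $\nu>0$, not $\nu>2$. In fact $\nu=2$ in the bounded-cookie-stack case (giving $\delta+\tilde\delta=0$), and $\nu$ can be less than $2$ in general.
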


The next theorem characterizes exactly when the walk is ballistic (i.e., has non-zero limiting velocity). 
\begin{thm}\label{th:LLN}
  Fix an arbitrary initial distribution $\eta$ on ${\cal R}$. Let
  $\bar{p} = 1/2$, $\d$ and $\tilde\d$ be as in Theorem
  \ref{th:rectran}, and $\vp$ be the limiting velocity (see
  Theorem~\ref{th:facts}\ref{th:slln}). Then
 \begin{itemize}
  \item $\vp > 0$ if and only if $\d > 2$. 
  \item $\vp < 0$ if and only if $\tilde\d > 2$. 
 \end{itemize}
\end{thm}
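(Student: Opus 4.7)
The plan is to establish the first equivalence $v_0>0\Leftrightarrow\delta>2$; the second equivalence follows by applying the first to the reflected walk $\{-X_n\}_{n\ge0}$, which corresponds to swapping $\mathbf{p}$ with $\mathbf{1}-\mathbf{p}$ and thus $\delta$ with $\tilde\delta$. Since $\delta>2$ already forces transience to $+\infty$ by Theorem~\ref{th:rectran}, and since $v_0>0$ likewise forces $X_n\to\infty$ and hence $\delta>1$ by the same theorem, I may restrict attention to the regime $X_n\to\infty$ and work with the hitting times $T_n=\inf\{k\ge 0:X_k=n\}$. A routine argument gives $v_0=\lim_{n\to\infty}n/T_n$ a.s., so the task reduces to deciding when $T_n/n$ has a finite a.s.\ limit, which by a subadditive/regenerative argument is equivalent to $\limsup_n E_\eta[T_n]/n<\infty$.

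Next I would invoke the Ray--Knight decomposition. Let $D_k^n$ denote the number of downcrossings of the edge $(k,k+1)$ before time $T_n$. A direct edge count yields
\[
T_n=n+2\sum_{k<n}D_k^n.
\]
Reading the trajectory backwards from time $T_n$, the sequence $(D_{n-1}^n,D_{n-2}^n,\ldots)$ becomes Markovian: given $D_k^n=m$, the next value $D_{k-1}^n$ is distributed as the number of \emph{failures} before the $(m+\mathbf{1}_{1\le k\le n})$-st \emph{success} in a run of Bernoulli trials whose biases follow the Markov cookie stack at site $k$. To make the backward process truly Markovian one enlarges the state to include the current state of the cookie Markov chain; call the enlarged chain $(Z_j,\chi_j)_{j\ge 0}$, a branching-like process with Markov-modulated offspring distribution. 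Showing $\sum_k E_\eta[D_k^n]=O(n)$ then reduces to showing that this enlarged BLP admits an invariant probability measure whose first marginal has finite mean.

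The crucial analytic step is the asymptotic analysis of $(Z_j,\chi_j)$ in the critical case $\bar p=1/2$. The plan is to compute the one-step conditional mean and variance of $Z_{j+1}-Z_j$ given $Z_j=m$, average $\chi_j$ against its invariant law, and obtain
\[
E[Z_{j+1}-Z_j\mid Z_j=m]\sim-\rho,\qquad \mathrm{Var}(Z_{j+1}-Z_j\mid Z_j=m)\sim\nu m,\qquad m\to\infty,
\]
where $\rho$ and $\nu$ are as in the parameter definition given before the theorem statement. These asymptotics identify $(Z_j)$ with a discrete version of a squared-Bessel-type process of ``dimension'' $2-2\rho/\nu=2-\delta$, reflected/killed at $0$. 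Standard Lamperti-type results (together with direct moment computations at large values of $Z$) then give: if $\delta>2$ the chain has an invariant measure with finite mean, so $\sum_k E_\eta[D_k^n]=O(n)$ and $v_0>0$; if $\delta\le 2$ (with $\delta>1$ for transience) the invariant mean is infinite, forcing $E_\eta[T_n]/n\to\infty$ and hence $v_0=0$. Combining with the reduction above closes the equivalence.

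The main obstacle is precisely the step from the one-step moment asymptotics of $(Z_j,\chi_j)$ to sharp control of its invariant distribution and of the cumulative growth $\sum_k Z_k$. The offspring distribution is negative-binomial-like and thus heavy-tailed, so standard Foster--Lyapunov drift conditions are not enough; one must use a martingale/diffusion-approximation argument tailored to critical Markov-modulated branching-like processes on $\mathbb Z_+$. Handling the Markovian modulation uniformly in the current cookie state $\chi_j$ is what distinguishes the present setting from the periodic and boundedly-many-cookies cases, and is where the explicit formulas for $\rho$ and $\nu$ (via the fundamental matrix of the kernel $K$) enter and have to be derived carefully.
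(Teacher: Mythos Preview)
Your overall strategy coincides with the paper's: reduce to the transient-to-the-right case, express $T_n$ via the left-directed edge local times, identify the backward branching-like process, and decide positivity of $v_0$ through the long-time behavior of that process. A few points in your setup, however, are inaccurate and worth correcting.

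First, the enlargement by $\chi_j$ is unnecessary. Because the cookie stacks are i.i.d.\ across sites and each stack is read from its first cookie at every new site, the backward BLP $V_i$ (with $V_0=0$ and $V_i=F^i_{V_{i-1}+1}$) is already a time-homogeneous Markov chain on $\Zp$ under the averaged measure $P_\eta$; no auxiliary cookie-state coordinate needs to be carried.

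Second, your drift and dimension formulas are off. For the \emph{backward} BLP one has $E_\eta[V_1-V_0\mid V_0=m]\to 1+\eta\cdot\tilde{\mathbf{r}}$ (Proposition~\ref{pr:Vmean}), not $-\rho$; the quantity $\rho=\eta\cdot\mathbf{r}$ belongs to the \emph{forward} BLP. The resulting squared Bessel dimension is $4(1+\eta\cdot\tilde{\mathbf{r}})/\nu=2(1-\delta)$ (equation~\eqref{Bsqdim}), not $2-\delta$. Getting this right is what makes $\delta=2$ the threshold for a finite first moment of the excursion sum.

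Third, rather than passing through the stationary measure of $V$, the paper uses the regeneration at $0$ directly. Since $\delta>1$ implies $\sigma_0^V<\infty$ a.s., the ergodic theorem yields
\[
\frac{1}{v_0}=1+2\,\frac{E_\eta^{V,0}\bigl[\sum_{i=0}^{\sigma_0^V-1}V_i\bigr]}{E_\eta^{V,0}[\sigma_0^V]},
\]
so $v_0>0$ iff the numerator is finite. This is exactly Theorem~\ref{th:bbp}, which gives $P_\eta^{V,0}\bigl(\sum_{i=0}^{\sigma_0^V-1}V_i>n\bigr)\asymp n^{-\delta/2}$ and hence finiteness iff $\delta>2$. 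The ``main obstacle'' you flag is therefore precisely Theorem~\ref{th:bbp}; its proof (Sections~\ref{sec:3lem}--\ref{sec:tails}) proceeds via the diffusion approximation combined with overshoot and exit-distribution estimates, rather than through Foster--Lyapunov or Lamperti invariant-measure criteria.
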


Our final main result concerns the limiting distributions in the
transient cases. We will state the results below for walks that are
transient to the right ($\d > 1$), though obvious symmetry
considerations give similar limiting distributions for walks that are
transient to the left ($\tilde\d > 1$) by replacing $\d$  with $\tilde\d$.  The theorem below not only gives limiting
distributions for the position $X_n$ of the ERW, but
also for the hitting times $T_n$, where for any $x\in \Z$ the hitting
time $T_x = \inf\{ k\geq 0: \, X_k = x \}$.  For $\a \in (0,2)$ and
$b>0$ we will use $L_{\a,b}(\cdot)$ to denote the distribution of the
totally-skewed to the right $\a$-stable distribution with
characteristic exponent
\begin{equation}\label{stablecf}
\log  \int_\R e^{iux} \, L_{\a,b}(dx) = 
 \begin{cases}
  -b |u|^\a \left( 1- i \tan(\frac{\pi\a}{2}) \sgn(u) \right) & \a \neq 1 \\
  -b |u| \left( 1 + \frac{2 i}{\pi} \log |u| \sgn(u) \right) & \a = 1. 
 \end{cases}
\end{equation}
Also, as in Theorem \ref{th:noncrit} we will use $\Phi(\cdot)$ to denote the distribution function of a standard normal random variable. 

\begin{thm}\label{th:limdist}
 Let $\bar{p} = 1/2$ and suppose that $\d>1$. 
\begin{enumerate}
 \item If $\d \in (1,2)$, then there exists a constant $b>0$ such that \label{ldcase1}
 \[
  \lim_{n\ra\infty} P_\eta\left( \frac{T_n}{n^{2/\d}} \leq x \right) = L_{\d/2,b}(x) 
  \quad\text{and}\quad 
  \lim_{n\ra\infty} P_\eta\left( \frac{X_n}{n^{\d/2}} \leq x \right) = 1 - L_{\d/2,b}(x^{-2/\d}). 
 \]
 \item \label{d2LD} If $\d = 2$, then there exist constants $a,b>0$ and sequences $D(n)\sim a^{-1} \log n$ and $\Gamma(n) \sim a n/\log(n)$ such that 
 \[
 \lim_{n\ra\infty} P_\eta\left( \frac{T_n - n D(n)}{n} \leq x \right) = L_{1,b}(x)
   \quad\text{and}\quad 
 \lim_{n\ra\infty} P_\eta\left( \frac{X_n - \Gamma(n)}{n/(\log n)^2} \leq x \right) = 1-L_{1,b}(-x/a^2). 
 \]
 \item If $\d \in (2,4)$, then there exists a constant $b>0$ such that 
 \[
  \lim_{n\ra\infty} P_\eta\left( \frac{T_n - n/\vp}{ n^{2/\d}} \leq x \right) = L_{\d/2,b}(x)
    \quad\text{and}\quad 
  \lim_{n\ra\infty} P_\eta\left( \frac{X_n - n \vp}{ \vp^{1+2/\d} n^{2/\d}} \leq x \right) = 1-L_{\d/2,b}(-x). 
 \]
 \item If $\d = 4$, then there exists a constant $b>0$ such that 
 \[
  \lim_{n\ra\infty} P_\eta\left( \frac{T_n - n/\vp}{b\sqrt{n \log n}} \leq x \right) = \Phi(x)
    \quad\text{and}\quad 
  \lim_{n\ra\infty} P_\eta\left( \frac{X_n - n\vp}{b \vp^{3/2} \sqrt{n \log n}} \leq x \right) = \Phi(x). 
 \]
 \item If $\d > 4$, then there exists a constant $b>0$ such that \label{ldcase5}
 \[
  \lim_{n\ra\infty} P_\eta\left( \frac{T_n - n/\vp}{b \sqrt{n}} \leq x \right) = \Phi(x)
    \quad\text{and}\quad 
  \lim_{n\ra\infty} P_\eta\left( \frac{X_n - n\vp}{b \vp^{3/2} \sqrt{n}} \leq x \right) = \Phi(x). 
 \]
\end{enumerate}
\end{thm}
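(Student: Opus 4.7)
My plan is to follow the branching-like-process (BLP) strategy that goes back to Kesten--Kozlov--Spitzer for RWRE and was adapted to ERWs by Basdevant--Singh, Kosygina--Mountford, and Dolgopyat--Kosygina: first establish the limits for the hitting times $T_n$ and then transfer them to $X_n$ by hitting-time inversion. The starting point is the identity $T_n = n + 2\sum_{k<n} D_k^n$, where $D_k^n$ is the number of left steps taken by the walk from site $k$ before $T_n$ (a.s.\ finite because $\delta > 1$). Since the cookie stacks $\{\mathbf{R}^x\}_{x\in\Z}$ are i.i.d.\ and $D_k^n$ depends on $\{\mathbf{R}^x\}_{x\le k}$ only through the counts at sites $k,k+1,\ldots,n-1$, the backward sequence $Z_j := D_{n-j}^n$ is a time-homogeneous Markov chain on $\Z_+ \times \mathcal{R}$ (the first coordinate being the downcrossing count, the second tracking which cookie in the stack would next be consumed) with initial state having first coordinate $0$ and with law independent of $n$. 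This identifies $T_n - n$ in law with $2\sum_{j=1}^{n} Z_j^{(1)}$ where $Z_j^{(1)}$ is the first coordinate of $Z_j$; this is the BLP on which all analysis is done.

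The key technical input is the tail asymptotics of the area $A := \sum_{j=0}^{\sigma-1} Z_j^{(1)}$ of a single BLP excursion between returns to the set $\{0\}\times\mathcal{R}$, namely $P_\eta(A > x) \sim c\, x^{-\delta/2}$ as $x\to\infty$, with the usual logarithmic corrections at the critical exponents $\delta=2$ and $\delta=4$. This is extracted from a diffusion approximation: under Brownian scaling the BLP converges to a squared Bessel-type diffusion whose dimension parameter is determined by $\rho$ and $\nu$ (hence by $\delta$), and the exit-time/area tail for that diffusion is classical. Decomposing $(Z_j)$ into i.i.d.\ excursion areas $A_1, A_2, \ldots$ between visits to the set with first coordinate zero, a renewal argument shows that the number of completed excursions by time $n$ is $\sim c' n$ a.s., so the classical stable/Gaussian dichotomy applied to $\sum_\ell A_\ell$ immediately produces the five cases for $T_n$: the $L_{\delta/2,b}$ laws when $\delta \in (1,2)\cup(2,4)$ (with appropriate centering by $n/\vp$ only in the latter regime, since $E[A]=\infty$ when $\delta\le 2$), the Cauchy-type law $L_{1,b}$ with logarithmic centering $D(n)\sim a^{-1}\log n$ at $\delta=2$, a Gaussian with $\sqrt{n\log n}$ scale at $\delta=4$, and a standard CLT for $\delta>4$.

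Step 4 is the inversion from $T_n$ to $X_n$. When $\vp>0$ (equivalently $\delta>2$) one uses $\{T_n \le k\} = \{X_k \ge n\}$ up to boundary corrections together with $T_n/n \to 1/\vp$ a.s.; linearizing $T^{-1}$ at $t=1/\vp$ produces the Jacobian $\vp^{1+2/\delta}$ in cases (iii)--(iv) and $\vp^{3/2}$ in cases (iv)--(v), while the $1-L_{\delta/2,b}(-x)$ orientation in (iii) and $\Phi$ in (iv)--(v) come from the sign of that Jacobian. In case (i), $\vp=0$ and a direct inversion gives $P_\eta(X_n \le y) = P_\eta(T_{\lceil y\rceil} \ge n)$, which under the $n^{2/\delta}$ scaling for $T_n$ yields the composition $1-L_{\delta/2,b}(x^{-2/\delta})$ for $X_n$ at the scale $n^{\delta/2}$. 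The $n/(\log n)^2$ scale in (ii) comes from differentiating $D(n)\sim a^{-1}\log n$: an $O(n)$ fluctuation of $T_n$ corresponds to an $O(n/(\log n)^2)$ fluctuation of $X_n$ about $\Gamma(n)\sim an/\log n$.

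The main obstacle will be Step 2, i.e., proving the precise polynomial tail $P_\eta(A>x) \sim c\, x^{-\delta/2}$ in the Markovian cookie setting. In the boundedly-many-cookies regime this was done by direct generating-function computations on a $\Z_+$-valued BLP, but here the BLP lives on $\Z_+ \times \mathcal{R}$, so averaging out the auxiliary Markov coordinate requires a Perron--Frobenius/harmonic-function argument to identify the correct dimension parameter and to show that the tail constant is insensitive to the initial state on $\mathcal{R}$. Everything else --- the renewal argument for the number of excursions, the classical stable-CLT dichotomy, and the inversion --- is then a comparatively routine adaptation of the arguments already used in the bounded-cookie case.
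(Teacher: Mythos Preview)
Your overall strategy matches the paper's: encode $T_n - n$ via the backward BLP, decompose into i.i.d.\ excursion blocks between returns to zero, obtain the tail asymptotics of the excursion length and area (Theorem~\ref{th:bbp}) via a squared-Bessel diffusion approximation, apply the stable/Gaussian dichotomy to the block sums, and then invert to $X_n$ using the sandwich \eqref{TXT} together with the backtracking estimate \eqref{backtrack}. The paper sketches exactly this in Section~\ref{ssec:proofmain} and fills in the $\delta=2$ case in Appendix~\ref{sec:delta2}.

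However, you misdescribe the BLP state space: you claim it must be run on $\Z_+ \times \mathcal{R}$, with a second coordinate tracking ``which cookie in the stack would next be consumed.'' This is unnecessary. Because the cookie stacks are i.i.d.\ \emph{across sites} and each generation of the backward process consumes only the stack at a fresh site, no $\mathcal{R}$-state is carried between generations; under the averaged measure $P_\eta$ the backward BLP $V$ is already a time-homogeneous Markov chain on $\Z_+$ alone (see the paragraph following \eqref{bbpdef}), and the excursions between returns to $0$ are genuinely i.i.d.\ with no Perron--Frobenius averaging needed. The Markovian cookie structure enters only \emph{within} a single transition, through the law $P_\eta(V_1\in\cdot\mid V_0=m)$ governed by the hidden chain $\{R_j\}$ at that one site; this is what makes the mean/variance asymptotics (Propositions~\ref{pr:Umean}, \ref{varerror}, \ref{pr:Vmean}, \ref{Vvarerror}) and the concentration bound (Lemma~\ref{lem:dev}) more delicate than in the bounded-cookie case. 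The genuine obstacle you should flag is elsewhere: the diffusion limit of Lemma~\ref{lem:DA} holds only after stopping at level $\epsilon n$, so reading off exact tails for $\sigma_0^V$ and the excursion area down to zero requires the additional geometric-scale exit and overshoot estimates of Lemmas~\ref{lem:exit}--\ref{lem:main}, not the averaging argument you propose.
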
 \subsection{An overview of ideas and open questions} Our
approach is based on the following three ingredients which appeared in
the literature 65-20 years ago and were successfully used by other
authors in a number of different contexts: (1) mappings between random
walk paths and branching process trees, (2) diffusion approximations
of branching-like processes (BLPs) and Ray-Knight theorems, and (3) embedded
renewal structures for BLPs.

 \textit{(1) Mappings between random walk paths and
    branching process trees.}
The existence of a bijection between excursions of nearest-neighbor paths on $\Z$ and rooted trees is well known and was observed as early as \cite[Section 6]{Har52}.
In this bijection, if the rooted tree is viewed as a genealogical tree then the offspring of the individuals in the $n$-th generation corresponds to the number of times the nearest-neighbor path on $\Z$ steps from $n$ to $n+1$ in between steps from $n$ to $n-1$. 
Properties of the random walk can then be deduced from properties of the corresponding random trees. 
For instance, the maximum distance of the excursion from $0$ corresponds to the lifetime of the branching process, and the return time of the walk to the origin is equal to twice the total progeny of the branching process over its lifetime.


Similarly, a related bijection is known to exist between rooted trees and nearest-neighbor paths in $\Z$ from $0$ to $n$. 
In this bijection, the rooted tree corresponds to a genealogical tree where there is one immigrant per generation (for the first $n$ generations only) and the offspring of individuals in the $i$-th generation correspond to the number of steps from $n-i$ to $n-i-1$ between steps from $n-i$ to $n-i+1$. 
As with the first bijection, this bijection also can be used to deduce properties of a random walk from properties of the corresponding random tree. 
For instance, the hitting time $T_n = \inf\{k\geq 1: X_k=n\}$ 
is equal to $n$ (the total number of immigrants) plus twice the total number of progeny of the branching process over its lifetime. 

For either of the above bijections, putting a probability measure on random walk paths induces a probability measure on trees and vice versa. 
For instance, for simple symmetric random walks on $\Z$, the first bijection gives a measure on trees corresponding to a critical Galton-Watson process with Geometric($1/2$) offspring distribution, and the second bijection corresponds to a Galton-Watson process with Geometric($1/2$) offspring distribution but with an additional immigrant in the first $n$ generations. 
These bijections were at the core of F.\,Knight's proof of the classical Ray-Knight theorem in \cite{Kni63}. 
For one-dimensional random walks in a random environment (RWRE), the corresponding measures on trees are instead branching processes with random offspring distributions amd the second bijection above was used in \cite{kksStable} to obtain limiting distributions for transient one-dimensional RWRE (under the averaged measure).  
The advantage of using this bijection to study the RWRE is that while the random walk is non-Markovian under the averaged measure, the corresponding branching process with random offspring distribution is a Markov chain. 
Subsequently, it has also been observed that several other models of self-interacting random walks also have this Markovian property for the BLPs which come from the above bijections.
In particular, this applies to a large number of self-interacting random walk models where the transition probabilities for the walk depend on the past behavior of the walk at the present site. Included in this are several models of self-repelling/attracting random walks \cite{tTSAW,tGRK,pHCEI,ptRWSBFE} as well as excited random walks with boundedly many cookies per site. 
For ERWs, this branching process approach was first used in \cite{bsCRWspeed} and has since been the basis for many of the subsequent results on ERWs; see for instance the review \cite{kzERWsurvey} as well as more recent works in \cite{kzEERW,kosERWPC,aoEM,pXSDERW}.

\textit{(2) Diffusion approximations of BLPs and Ray-Knight
  theorems.}
Diffusion approximations of branching processes is an extensively studied topic, especially in the context of applications to population dynamics (see, for example, books \cite{Jag75}, \cite{Kur81}). 
The fact that a rescaled critical Galton-Watson process converges weakly to a diffusion was first observed in \cite{Fel51} and then rigorously proven in \cite{Jir69}. In particular, a Galton-Watson
process with Geometric$(1/2)$ offspring distribution converges to
one-half of a zero-dimensional squared Bessel process. Adding an
immigrant in each generation raises the above dimension to two so that
the limiting process becomes one-half of the square of a standard 
two-dimensional Brownian motion. 
Recalling the connection of simple symmetric random walks with critical Galton-Watson processes from the bijections given above and the fact that Brownian motion is the scaling limit of random walks, we can think about the first and second classical Ray-Knight Theorems as continuous versions of these bijections.\footnote{The second Ray-Knight
  theorem was extended from Brownian motion to a large class of
  symmetric (and some non-symmetric) Markov processes
  (\cite{EKMRS}). These extensions found many new applications. The
  interested reader is referred to books \cite{MR06}, \cite{Sz12} and
  references therein.}

For a class of non-Markovian self-interacting random motions, a
generalized Ray-Knight theory was developed by B\'alint T\'oth (see
\cite{Tot99} and references therein). For these walks the corresponding diffusion
process limits for the local times are squared Bessel processes of appropriate dimensions. Using this generalized Ray-Knight theory the author obtains limiting distributions for the random walk stopped at an independent geometric time.
Additionally, for a certain sub-class of these self-interacting random walks he identifies these limiting distributions as those of a Brownian motion perturbed at extrema \cite[Remark on p.\,1334]{tGRK} and asks if the
observed connection extends to multi-dimensional distributions. 

Even though the dynamics of ERWs are quite different from those of the
self-interacting random walks in \cite{Tot99}, the corresponding
rescaled BLPs also converge to squared Bessel
processes (see \cite{kmLLCRW}, \cite{kzEERW}, \cite{DK14}, and
Lemmas~\ref{lem:DA} and \ref{al} below). 
Similarly to \cite{tGRK} these diffusion limits can be used to infer certain properties of the scaling behavior of the ERW, though alone they are not quite sufficient to obtain limiting distributions for the ERWs.

\textit{(3) Embedded renewal structure.}
To obtain limit theorems for transient ERWs we essentially follow an outline which was first presented in
\cite[pp.\,148-150]{kksStable} in the context of transient one-dimensional random walks in random environments. Starting with \cite{bsRGCRW}, 
 this strategy has been used in essentially all papers concerned with limit laws for transient one-dimensional ERW.
The idea is to first use the bijection above which relates the hitting time $T_n$ to $n$ plus twice the total progeny of a BLP, and then to compare the total progeny of this BLP to a sum of i.i.d.\ random variables using regeneration times of the BLP. To obtain limiting distributions for the hitting times (and then also the position) of the ERW using this approach, the key is to obtain precise tail asymptotics of both the regeneration times and the total progeny between regeneration times of the BLPs.

In \cite{bsRGCRW} and \cite{kzPNERW}, the necessary tail asymptotics
for the BLPs were obtained using generating functions or modifications
of the process which allowed for the application of known results from
the literature for branching processes with
migration. The approach based on squared Bessel diffusion
  limits of the BLPs and calculations in the spirit of gambler's ruin
  was proposed in \cite{kmLLCRW} and developed in subsequent papers
  \cite{kzEERW}, \cite{DK14}. This approach not only eliminates the
  need to quote results from the branching process literature but also
  allows one to obtain new results about BLPs.
In the current work we push
the method further by lifting the limitation on the number of cookies
per site at the cost of requiring a Markovian structure within the
cookie stacks.  We provide the required tail asymptotics in the full
critical regime (Theorems~\ref{th:fbp} and \ref{th:bbp}), show the
one-dimensional limit laws in the transient case, but leave the
recurrent case and functional limit theorems for future work.

\medskip

\subsubsection{Comments and open questions } 
\ 

(i) Using the formulas in \eqref{parforms} and
\eqref{eq:del} below, one can explicitly calculate the parameters $\d$
and $\tilde{\delta}$ in terms of $\eta$, $K$, and $\mathbf{p}$.
Therefore, recurrence/transience, ballisticity, and the type of the
limiting distribution can be determined for any given example.
However, there is no explicit formula for the limiting velocity $\vp$
when $\vp\neq 0$ or for the scaling parameters $a,b>0$ that appear in
Theorem \ref{th:limdist}.
\begin{op}
  What can be said about monotonicity and strict monotonicity of $\vp$
  with respect to the cookie environment?\footnote{See \cite{pCRWMnt}
    and \cite{Hol15} for the up to date account of the known results.}
\end{op}

(ii) Theorems \ref{th:LLN} and \ref{th:limdist} give the first such
results for ERWs with an unbounded number of cookies
per site (with the exception of the special case of random walks in
random environments).  As mentioned above, the recurrence/transience
results in Theorem \ref{th:rectran} were known for excited random
walks with unbounded number of cookies per site only in the special
cases of non-negative cookie drifts ($\w_x(j) \geq 1/2$ for all $j$)
\cite{zMERW} or periodic cookie stacks \cite{kosERWPC}.  In
\cite{kosERWPC} the authors also use a BLP, but their proof of the
criterion for recurrence/transience differs from ours and is based on
the construction of appropriate Lyapunov functions rather than on the
analysis of the extinction
times of the BLP.  It is possible
  that their proof may be extended to include our model, but such an
  extension is not automatic since it requires an additional strong
  concentration estimate (see \cite[Theorem 1.3]{kosERWPC}) while our
  approach seems much less demanding.


(iii) Functional limit theorems have been obtained for ERWs
    with bounded cookie stacks under the i.i.d.\ and (weak)
    ellipticity assumptions. The transient case was handled in
    \cite[Theorem 3]{kzPNERW} and \cite[Theorems 6.6 and
    6.7]{kzERWsurvey}. Scaling limits for the recurrent case were
    obtained in \cite{dCLTERW} and \cite{dkSLRERW}. Excursions from
    the origin and occupation times of the left and right semi-axes
    for ERW with bounded number of cookies per site have also been
    studied \cite{kzEERW}, \cite{DK14}. We believe that similar
    results hold for the model considered in the current paper and
    leave this study for future work.
    \begin{op}
      State and prove functional limit theorems for the critical transient case
      ($\bar{p}=1/2$, $\max\{\delta,\tilde{\delta}\}>1$).
    \end{op}
\begin{op}
  Study scaling limits and occupation times of the right and left
  semi-axes for the recurrent case ($\bar{p} = 1/2$,
  $\max\{\d,\tilde\d\} \le 1$).
\end{op}
\begin{op}
  Show that the critical ERW (i.e.\ $\bar{p}= 1/2$) is strongly
  transient\footnote{$(X_n)_{n\ge 0}$ is said to be strongly transient
    under $P_\eta$ if it is transient and
    $E_\eta[R\,|\,R<\infty]<\infty$, where
    $R:=\inf\{n\ge 1:\,X_n=X_0\}$.} under $P_\eta$ if and only if
  $\max\{\delta,\tilde{\delta}\}>3$. Show that the non-critical ERW
  (i.e.\ $p\ne 1/2$) is always strongly transient.
    \end{op}

\subsection{Examples}
In this subsection we give some examples where the parameters $\d$ and $\tilde\d$ are explicitly calculated using the formulas in \eqref{parforms} and \eqref{eq:del}. 
The calculations are somewhat tedious to do by hand, but since the formulas are explicit one can usually compute the parameters very quickly with technology. 

\begin{ex}[Periodic cookie stacks]\label{ex:period2}
In the setting of Example~\ref{ex:period1},
\[
 \d = \frac{\sum_{j=1}^N \sum_{i=1}^j (1-p(j))(2p(i)-1)}{2\sum_{j=1}^N p(j)(1-p(j))} 
\quad\text{and}\quad
\ \tilde\d = \frac{\sum_{j=1}^N \sum_{i=1}^j p(j)(1-2p(i))}{2\sum_{j=1}^N p(j)(1-p(j))}
\]
For this model, the characterization of recurrence and transience was proved previously in \cite{kosERWPC} but the results in Theorems \ref{th:LLN} and \ref{th:limdist} are new. 
\end{ex}

\begin{ex}[Cookie stacks of geometric height]\label{ex:geo}
In the setting of Example~\ref{geom} one obtains that
$\d = -\tilde{\delta}=(2p(1)-1)/\alpha$. 
Note that $\E_1[ \sum_{j=1}^\infty (2 \w_0(j) - 1)] = (2p(1)-1)/\a$ as well; 
this agrees with the criteria for recurrence/transience for this example that can be obtained from \cite{zMERW}. Previously, there were no known results regarding ballisticity or limiting distributions for this example. 

\begin{figure}[h]
 \includegraphics[scale=0.5]{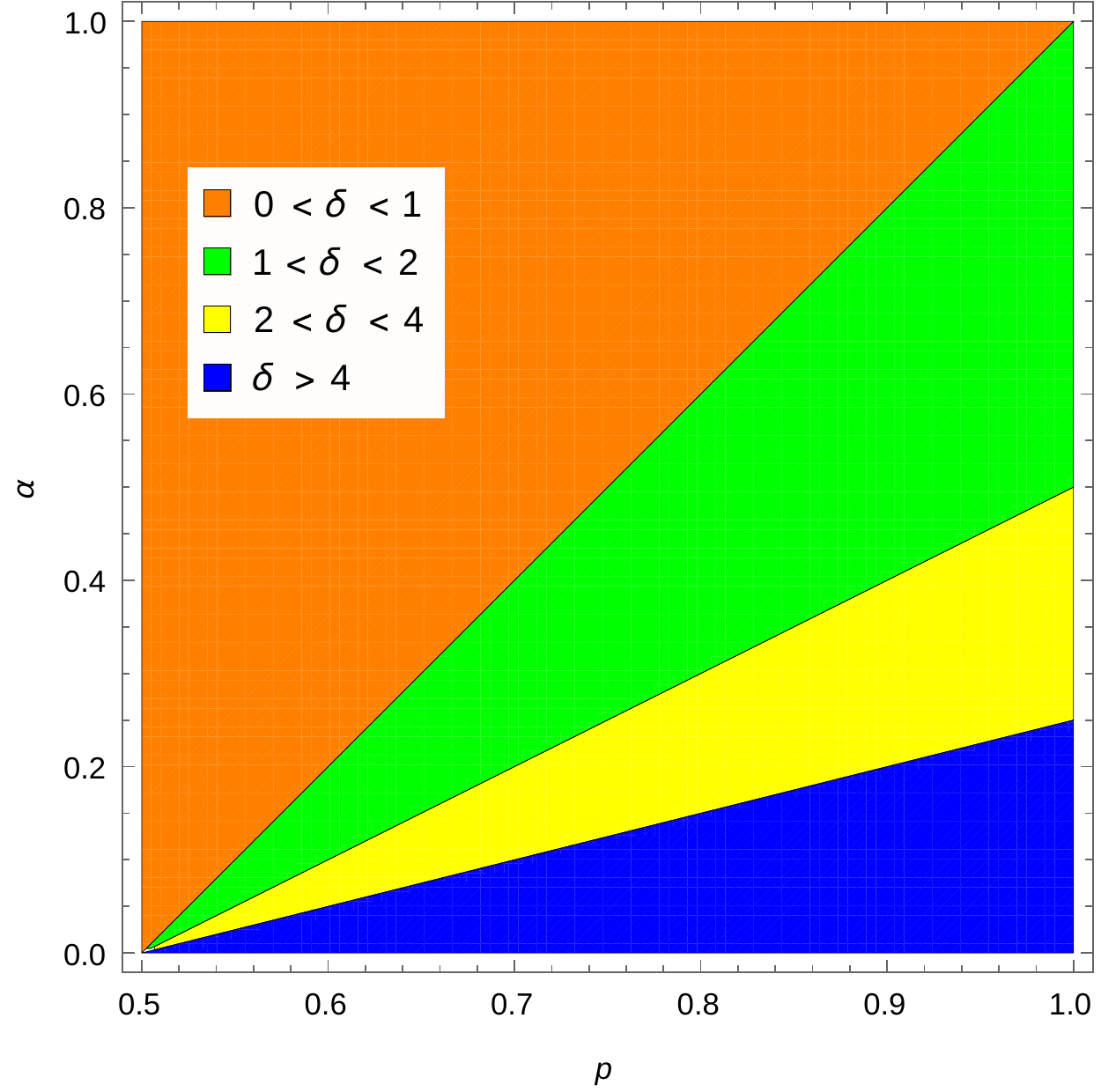}
\caption{A plot showing the regions for different types of behavior occurring for cookie environments as in Example \ref{ex:geo}.
} \label{fig1}
\end{figure}
\end{ex}

\begin{ex}[Two-type, critical]\label{ex:2tb}
 Let $K = \begin{pmatrix} 1-\a & \a \\ \a & 1-\a \end{pmatrix}$ for some $\a \in (0,1)$ and let $\mathbf{p} = (p,1-p)^t$ for some $p>1/2$. In this case, if we use the initial condition $\eta = (1,0)$ then a calculation done with Mathematica yields 
\begin{equation}\label{eq:th2tb}
 \d = \d\left( \begin{pmatrix} 1-\a & \a \\ \a & 1-\a \end{pmatrix}, \begin{pmatrix} p \\ 1-p \end{pmatrix}, (1,0) \right) = \frac{(2 p-1) ((2 \alpha -1) p-\alpha )}{4 (2 \alpha -1) (p-1) p + \a -1}.
\end{equation}
Note that if $\a \in (0,1/4)$ then the parameter $\d$ is non-monotone in $p$. 
In fact, for $\a < \frac{14 - 3 \sqrt{21}}{56} \approx 0.00450487$ the parameter $\d$ starts at 0, increases to a value larger than $4$, and then decreases to $1$ as $p$ ranges from $1/2$ to $1$ (See Figure \ref{fig:2typebalanced}). 
\begin{figure}
\hfill
  \includegraphics[scale=0.5]{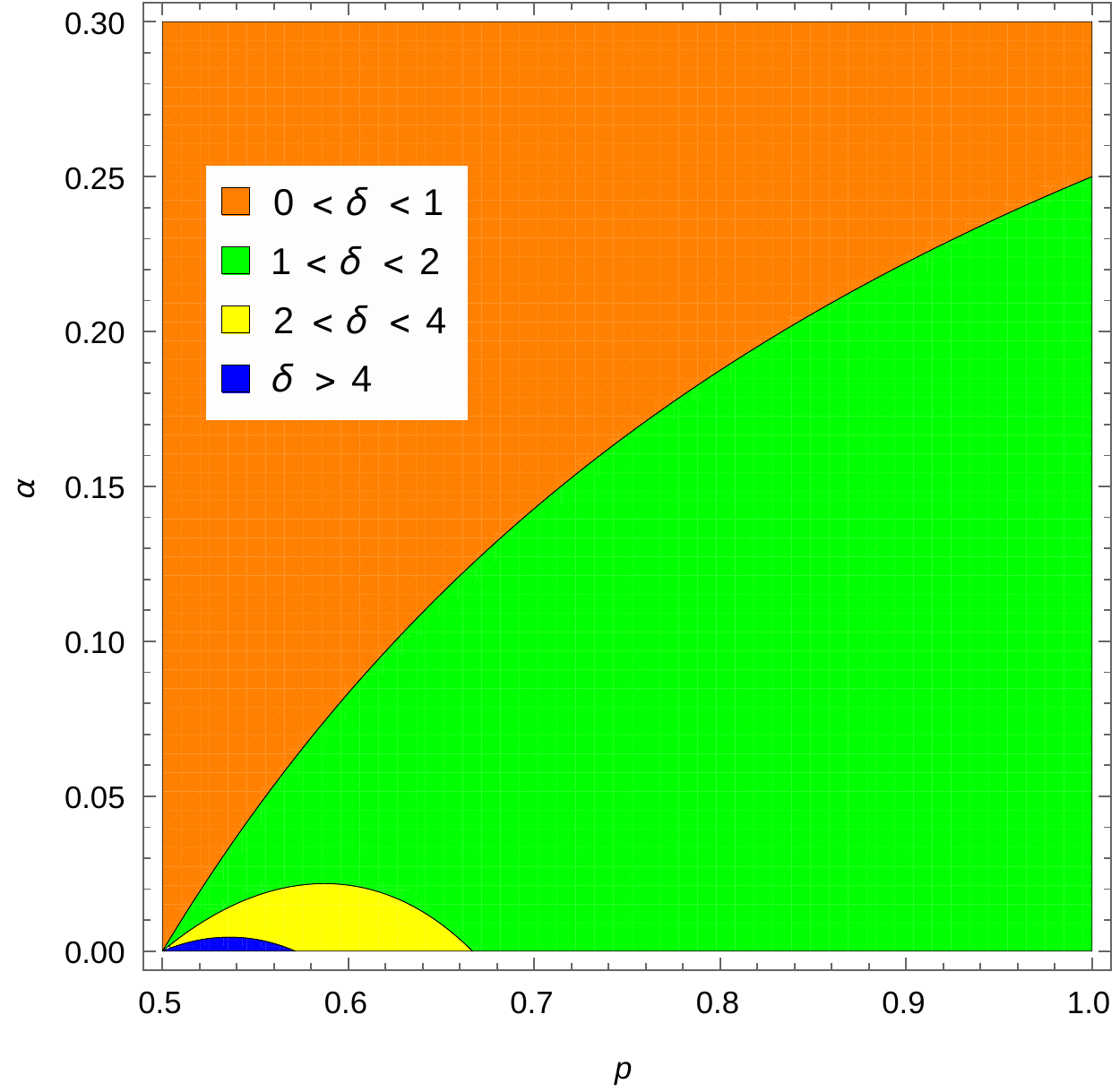}
\hfill
 \includegraphics[scale=0.6]{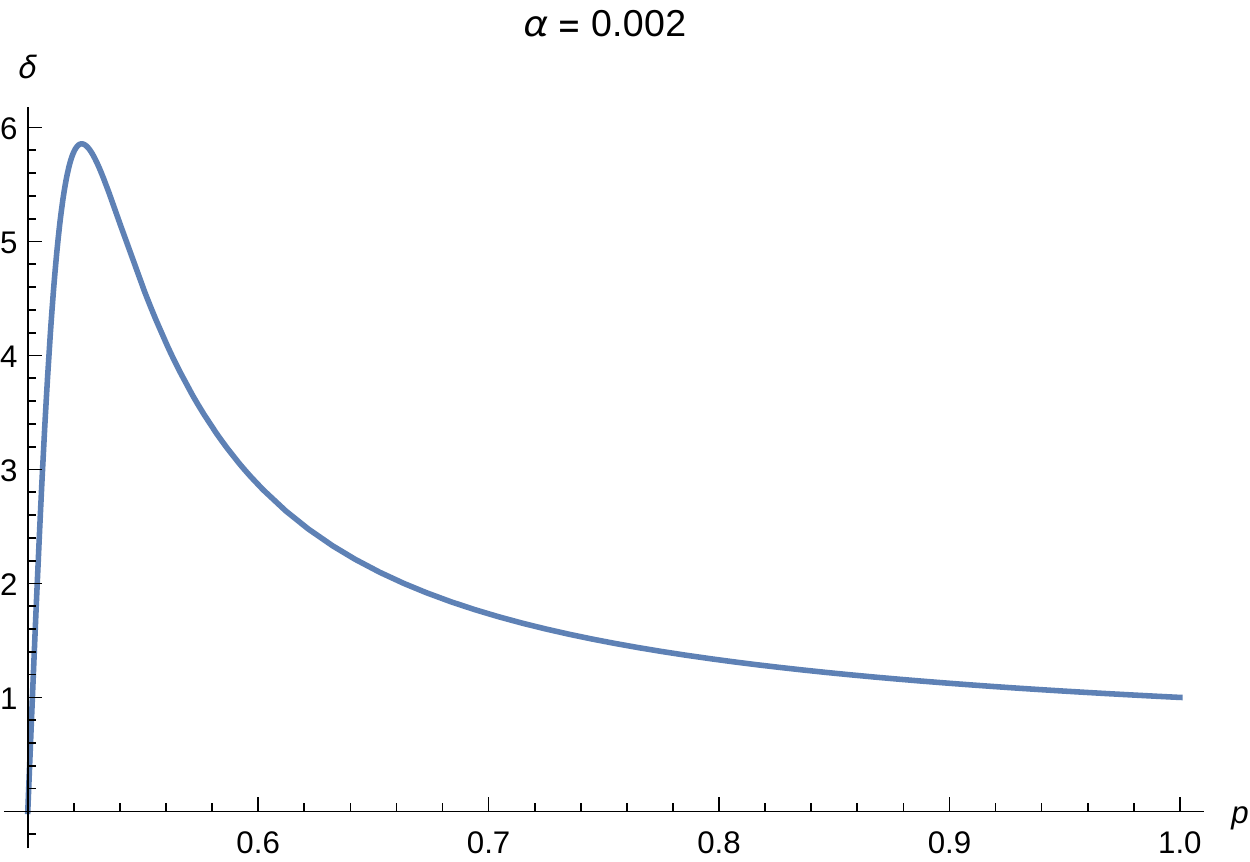}
\hfill
\caption{The left plot shows the regions for different types of
  behavior occurring for cookie environments as in Example
  \ref{ex:2tb}. 
  The right plot shows the value of $\d$ as a function of
  $p \in (1/2,1)$ when $\a=0.002$.} \label{fig:2typebalanced}
\end{figure}
\end{ex}
\noindent
Note also that the case $\a = 0$ corresponds to a classical simple
random walk which steps to the right with probability $p>1/2$ on each
step. However, taking $\a = 0$ in \eqref{eq:th2tb} gives
$\d = \frac{p}{2p-1}$ and thus the results of Theorems \ref{th:LLN}
and \ref{th:limdist} clearly do not hold when $\a=0$ (this is not a
contradiction since the transition matrix $K$ is the identity matrix
when $\a=0$ and there is more than one irreducible closed
  set). 
The case $\a=1$ gives periodic cookie stacks with period $2$ and
the formula \eqref{eq:th2tb} gives $\d = \frac{(2p-1)(1-p)}{4p(1-p)}$
which agrees with the formula obtained in
  \cite{kosERWPC}, see Example~\ref{ex:period2}.

\subsection{Structure of the paper} In Section~\ref{sec:branching} we
introduce our main tool, forward and backward 
BLP,  and show how the main results in the critical case (Theorems~\ref{th:rectran} - \ref{th:limdist}) can be deduced from
Theorems~\ref{th:fbp} and \ref{th:bbp} about the behavior of the tails
of the lifetime and the total progeny over a lifetime of these BLP.
Much of the remainder of the paper therefore is focused on the analysis of these BLPs, with the goal of proving Theorems \ref{th:fbp} and \ref{th:bbp}. 
Section~\ref{sec:meanvar} discusses the asymptotics of the
mean and variance of the forward BLP. Section~\ref{sec:param} extends
the results of the previous section to the backward BLP and
establishes key relationships between parameters of the BLPs; in particular, in Section \ref{sec:param} we give explicit formulas for $\delta$ and $\tilde{\delta}$. In
Section~\ref{sec:noncrit} we treat the non-critical case
($\bar{p}\ne 1/2$) deriving Theorem~\ref{th:noncrit}. 
Using the formulas for the parameters of the BLPs derived in the previous sections, we show how the BLPs in the non-critical case $\bar{p}>1/2$  can be coupled with critical BLPs for which the parameter $\d$ is arbitrarily large. 
From this coupling we then show that the conclusions of Theorem \ref{th:noncrit} can be derived from Theorems \ref{th:fbp} and \ref{th:bbp} in the same way as the proofs of Theorems \ref{th:rectran}--\ref{th:limdist} were obtained when $\delta > 4$. 
Finally, in Sections~\ref{sec:3lem} and
\ref{sec:tails} we discuss proofs of Theorems ~\ref{th:fbp} and
\ref{th:bbp}. 
The crucial step here is showing that the BLPs have scaling limits that are squared Bessel processes. The generalized dimensions of these squared  Bessel processes depend on the asymptotics of the mean and variance of the BLPs computed in Sections \ref{sec:meanvar} and \ref{sec:param}. 
This provides the connection of the parameters $\d$ and $\tilde\d$ defined in Section \ref{sec:param} with the tail asymptotics exponents in Theorems \ref{th:fbp} and \ref{th:bbp}. 


\section{The associated branching-like processes}\label{sec:branching}
In this section, we introduce two BLPs that are naturally associated
with the ERW and will prove the main theorems for the critical case
assuming the necessary results about these BLP. 


Given a cookie environment $\w$, we will expand the measure $P_\w$ to
include an independent family of Bernoulli random variables
$\{\xi_x(j)\}_{x\in\Z, \, j\geq 1}$ such that
$\xi_x(j) \sim \text{Bernoulli}(\w_x(j))$.  The ERW
can then be constructed from the $\xi_x(j)$ as follows: if $X_n = x$
and $\sum_{k=0}^n \ind{X_k = x} = j$, then
$X_{n+1} = X_n + 2\xi_x(j) - 1$. 
  \begin{rem}
  The arrow systems construction of the ERW
  introduced by Holmes and Salisbury \cite{HS12} is very similar to
  the above coin toss construction: compare the last equation with
  \cite[p.\,2, above Theorem 1.1]{Hol15}. The distinction lies in the
  emphasis of the arrow approach on combinatorial results obtained by
  coupling of arrow systems rather than probability measures.
  \end{rem}

We now show how the family of Bernoulli random variables $\{\xi_x(j)\}$ can also be used to construct the associated forward and backward BLP.

\subsection{The forward branching-like process}\label{ssec:fblp}
The excursions of a random walk to the right of the origin induce a natural tree-like structure on the right-directed edge local times of the walk. 
That is, for $x\geq 1$ jumps from $x$ to $x+1$ can be thought of ``descendants'' of previous jumps from $x-1$ to $x$. 
To be precise, if we set $\gamma_0 = 0$ and for $n\geq 1$ and $x\geq 0$ let
\[
 \gamma_n = \inf\{k > \gamma_{n-1}: \, X_{k-1} > 0 \text{ and } X_k = 0 \}
\quad \text{and}\quad 
 \mathcal{E}_x^n = \sum_{k=0}^{\gamma_n-1} \ind{X_k = x, \, X_{k+1} =x+1},
\]
then $\gamma_n$ is the time of the $n$-th return to the origin from the right and $\mathcal{E}_x^n$ is the number of times the random walk has traversed the directed edge from $x$ to $x+1$ by time $\gamma_n$ (note that $\gamma_n$ can be infinite if the walk is transient to the right or left).

If the random walk makes $n$ excursions to the right ($\gamma_n < \infty$), then the directed edge local times $\mathcal{E}_x^n$ can be computed from the Bernoulli random variables. If $\mathcal{E}_{x-1}^n = m$, then since the steps of the random walk are $\pm 1$ it follows that the walk makes $m$ jumps to the left from $x$ by time $\gamma_ n$. Thus, $\mathcal{E}_x^n$ is the number of jumps that the random walk makes to the right from $x$ before the $m$-th jump to the left. If we refer to a Bernoulli random variable $\xi_x(j)$ as a ``success'' if $\xi_x(j) = 1$ and a ``failure'' if $\xi_x(j) = 0$, then $\mathcal{E}_x^n$ is the number of successes in the Bernoulli sequence $\xi_x = \{\xi_x(j)\}_{j\geq 1}$ before the $m$-th failure.  
More precisely, introducing the notation
\[
 S_m^x = \inf\left\{ k \geq 0: \sum_{j=1}^{k+m} (1-\xi_x(j)) = m \right\},
\]
we have that if $\gamma_n<\infty$
 and $\mathcal{E}_{x-1}^n = m$ then $\mathcal{E}_x^n = S_m^x$. 
Note that $S_0^x = 0$ by the convention that an empty sum is equal to zero. 
Also, let $G_m^x = S_m^x - S_{m-1}^x$ so that $G_m^x$ is the number of successes between the $(m-1)$-st failure and the $m$-th failure in the Bernoulli sequence $\xi_x$. 

With the above directed edge process $\mathcal{E}_x^n$ as motivation, we define the forward BLP started at $y\geq 1$ by 
\begin{equation}\label{fbpdef}
 U_0 = y \quad\text{and}\quad U_i = S_{U_{i-1}}^i = \sum_{m=1}^{U_{i-1}} G_m^i \quad \text{for } i \geq 1. 
\end{equation}
We will use the notation $P_\w^{U,y}(\cdot)$ and
$P_\eta^{U,y}(\cdot) = \E_\eta[ P_\w^{U,y}(\cdot)]$ for the quenched
and averaged distributions, respectively, of the forward BLP started
at $y$.  Note that under the quenched measure $P_\w^{U,y}$ the
forward BLP is a time-inhomogeneous Markov chain, but since the cookie
environment is assumed to be spatially i.i.d.\ the forward BLP is a
time-homogeneous Markov chain under the averaged measure
$P_\eta^{U,y}$ for any initial distribution $\eta$.

The following Lemma summarizes the connection of the forward BLP with the directed edge local times of the random walk. 
\begin{lem}\label{lem:fbpExn}
  If $U_0 = n\geq 1$, then $U_i \geq \mathcal{E}_i^n$ for all
  $i\geq 0$. Moreover, on the event $\{\gamma_n < \infty\}$ we have
  $U_i = \mathcal{E}_i^n$ for all $i\geq 0$ if $U_0 = n$.
\end{lem}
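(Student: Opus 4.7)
\medskip

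\noindent\textbf{Proof plan for Lemma \ref{lem:fbpExn}.}

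The plan is to induct on $i$, using the coin-toss construction of the ERW from the Bernoullis $\xi_x(j)$. The central bookkeeping identity is that, for $x\ge 1$, every visit to $x$ consumes one Bernoulli from $\xi_x$, so if the walk visits $x$ a total of $j_x$ times before $\gamma_n$ and makes $L_x^n$ left-jumps from $x$ (the number of failures among $\xi_x(1),\dots,\xi_x(j_x)$), then $\mathcal{E}_x^n = j_x - L_x^n$ is the number of successes in that prefix. The key preliminary observation is the topological fact that for any integer edge $\{x-1,x\}$ with $x\ge 1$, if $R(t)$ and $L(t)$ denote the numbers of right- and left-crossings of that edge by time $t$, then $R(t)-L(t)\in\{0,1\}$ because the walk starts at $0$ on the left side of the edge and each crossing changes the side. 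In particular, $L_i^n\le \mathcal{E}_{i-1}^n$ for every $i\ge 1$.

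For the base case $i=0$, note that between consecutive return times $\gamma_{k-1}$ and $\gamma_k$ the walk makes exactly one right-jump from $0$ (the jump that initiates the right excursion ending at $\gamma_k$), plus possibly some left excursions which contribute nothing to $\mathcal{E}_0^n$. Hence $\mathcal{E}_0^n = n = U_0$ whenever $\gamma_n<\infty$. When $\gamma_n=\infty$ the walk makes at most $n-1$ returns from the right, and at most one additional right-jump from $0$ that escapes to $+\infty$, so $\mathcal{E}_0^n\le n = U_0$.

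For the inductive step, assume $\mathcal{E}_{i-1}^n \le U_{i-1}$, with equality on $\{\gamma_n<\infty\}$. Combining this with $L_i^n \le \mathcal{E}_{i-1}^n$ gives $L_i^n \le U_{i-1}$, so $j_i$ cannot exceed the position $S_{U_{i-1}}^i + U_{i-1}$ of the $U_{i-1}$-th failure in $\xi_i$ unless $L_i^n=U_{i-1}$. The case $L_i^n<U_{i-1}$ forces $j_i\le S_{L_i^n+1}^i+L_i^n\le S_{U_{i-1}}^i+U_{i-1}-1$, whence $\mathcal{E}_i^n=j_i-L_i^n\le S_{U_{i-1}}^i=U_i$, and the case $L_i^n=U_{i-1}$ forces equalities all the way along the chain $L_i^n\le \mathcal{E}_{i-1}^n\le U_{i-1}$, which by $L_i^n=\mathcal{E}_{i-1}^n$ excludes an escape to $+\infty$ from any level $\ge i$ (in an escape the topology gives $L_i^n=\mathcal{E}_{i-1}^n-1$), so the last visit to $i$ is a left-jump, $j_i=S_{U_{i-1}}^i+U_{i-1}$, and $\mathcal{E}_i^n=U_i$.

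The equality statement on $\{\gamma_n<\infty\}$ then falls out of the same inductive step: on this event the walk returns to $0$ and so cannot escape to $+\infty$ from any level, forcing $L_i^n=\mathcal{E}_{i-1}^n$ for all $i\ge 1$, and induction gives $\mathcal{E}_i^n=U_i$ everywhere. The main technical obstacle is isolating the escape-to-$+\infty$ scenario, where one seemingly threatening case is $L_i^n=U_{i-1}$ together with $j_i>S_{U_{i-1}}^i+U_{i-1}$; the argument dispatches it by showing that an escape above level $i$ would already have strictly decreased $L_i^n$ below $\mathcal{E}_{i-1}^n$, producing a contradiction with the inductive hypothesis. Ellipticity is used implicitly to guarantee that infinitely many failures occur a.s.\ in each $\xi_i$, so that quantities like $S_{L_i^n+1}^i$ are finite.
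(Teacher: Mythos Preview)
Your argument is correct. The paper does not give a self-contained proof here: for the equality on $\{\gamma_n<\infty\}$ it points to the discussion immediately preceding the lemma (which is exactly your inductive step $\mathcal{E}_{i-1}^n=m\Rightarrow \mathcal{E}_i^n=S_m^i$), and for the inequality $U_i\ge \mathcal{E}_i^n$ it refers the reader to \cite[Section~4]{kzPNERW} and \cite[Lemma~2.1]{pCRWMnt}. Your induction---driven by the edge-crossing constraint $L_i^n\le \mathcal{E}_{i-1}^n$, the monotonicity of $m\mapsto S_m^i$, and the observation that in the non-escape case the last visit to $i$ must be a left step so that $j_i$ lands exactly on the $L_i^n$-th failure---is precisely the standard argument in those references, so you have reproduced what the paper outsources rather than taken a different route.
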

The equality $U_i = \mathcal{E}_i^n$ on the event
$\{\gamma_n < \infty\}$ was described above. For the proof of the
general inequality $U_i \geq \mathcal{E}_i^n$ we refer the reader to
\cite[Section 4]{kzPNERW} or \cite[Lemma 2.1]{pCRWMnt}.

\begin{rem}
 Note that in the case of classical simple random walks (i.e., $\w_x(j) \equiv p \in (0,1)$) $\{G_m^i\}_{i\in\Z, \, m\geq 1}$ is a sequence of i.i.d.\ Geometric($1-p$) random variables. In this case, it is clear from \eqref{fbpdef} that $U_i$ is a branching process with Geometric($1-p$) offspring distribution. For ERWs with boundedly many cookies per site the $G_m^i$ are i.i.d.\ Geometric($1/2$) for all $m$ sufficiently large and thus one can interpret $U_i$ as a branching process with (random) migration (c.f. \cite{kzPNERW}, or more explicitly but in the context of the backward BLP see \cite{bsCRWspeed}). However, in the more general setup of the current paper one can no longer interpret $U_i$ as a branching process and so we simply refer to $U_i$ as a ``branching-like'' process. 
\end{rem}

\begin{rem}\label{rem:lfbp}
 One can also obtain a similar BLP  which is related to the excursions of the ERW to the left of the origin. Clearly this BLP would have the same law as the forward BLP $U_i$ defined here but with $\mathbf{p}$ replaced by $\one - \mathbf{p}$. 
\end{rem}

\subsection{The backward branching-like process}\label{ssec:bblp}
The backward BLP is related to the random walk through edge local times. 
However, the backward BLP is related to the local times of the \emph{left} directed edges when the random walk first reaches a fixed point to the right of the origin. 
To be precise, 
\[
\mathcal{D}_n^x = \sum_{k=0}^{T_n-1} \ind{X_k = x, \, X_{k+1} = x-1}, \quad n\geq 1, \, x < n, 
\]
be the number of steps to the left from $x$ before time $T_n$ (recall that $T_n = \inf\{k\geq 0: X_k = n\}$ is the hitting time of $n$ by the ERW). 
On the event $\{T_n < \infty\}$, the sequence of directed edge local times $\{\mathcal{D}_n^x\}_{x\leq n}$ also have a branching-like structure. 
Jumps to the left from $x+1$ before time $T_n$ give rise to subsequent jumps to the left from $x$ before time $T_n$. However, one important difference with the forward BLP should be noted in that not all jumps to the left from $x$ are ``descendants'' of jumps to the left from $x+1$. In particular, for $x\geq 0$ the random walk can jump to the left from $x$ before ever jumping from $x$ to $x+1$.

As with $\mathcal{E}_x^n$ above, the directed edge process $\mathcal{D}_n^x$ can be computed from the Bernoulli random variables $\xi_x(j)$.  
In particular, if 
\[
 F_m^x = \inf\left\{ k \geq 0: \sum_{j=1}^{k+m} \xi_x(j) = m \right\}
\]
denotes the number of failures in the Bernoulli sequence $\xi_x = \{\xi_x(j)\}_{j\geq 1}$ before the $m$-th success 
then it is easy to see that if $T_n < \infty$ then $\mathcal{D}_n^n = 0$ and 
\begin{equation}\label{Dxn}
 \text{if } \mathcal{D}_n^{x+1} = m \quad\text{then}\quad \mathcal{D}_n^x = 
\begin{cases} 
 F_{m+1}^x & x\geq 0 \\
F_m^x & x < 0, 
\end{cases}
\quad \text{for } x < n. 
\end{equation}
Indeed, if $x\geq 0$ and there are $m$ jumps from $x+1$ to $x$ before time $T_n$ then there must be $m+1$ jumps from $x$ to $x+1$ (the initial jump from $x$ to $x+1$ plus $m$ more jumps which can be paired with a prior jump from $x+1$ to $x$). Thus, from the construction of the random walk via the Bernoulli random variables $\xi_x(j)$ above, it follows that the number of jumps from $x$ to $x-1$ before time $T_n$ is the number of failures before the $(m+1)$-th success in the Bernoulli sequence $\xi_x$. 
The explanation of \eqref{Dxn} when $x<0$ is similar, with the exception that all jumps to the right from $x$ can be paired with a prior jump to the left from $x+1$. 

Again with the directed edge local time process as motivation, we define the backward BLP started at $y\geq 0$ by 
\begin{equation}\label{bbpdef}
 V_0 = y \quad\text{and}\quad V_i = F_{V_{i-1}+1}^i \quad \text{for } i\geq 1. 
\end{equation}
We will use $P_\w^{V,y}$ and $P_\eta^{V,y}$ to denote the quenched and averaged laws of the backward branching process started at $y\geq 0$. As with the forward BLP, $V_i$ is a time-inhomogeneous Markov chain under the quenched measure and a time-homogeneous Markov chain under the averaged measure. 
\begin{lem}\label{lem:bbpDxn}
 If $n\geq 1$ and $P_\eta( T_n < \infty) = 1$, then the sequence $(\mathcal{D}_n^n,\mathcal{D}_n^{n-1},\ldots,\mathcal{D}_n^1,\mathcal{D}_n^0)$ has the same distribution under $P_\eta$ as the sequence $(V_0,V_1,\ldots,V_{n-1},V_n)$ under the measure $P_\eta^{V,0}$. 
\end{lem}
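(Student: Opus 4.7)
The plan is to realize both sides on a common probability space via the Bernoulli random variables $\{\xi_x(j)\}$, use the pathwise recursion \eqref{Dxn} for the left-directed edge local times, and then invoke spatial i.i.d.-ness of the cookie stacks to match distributions.

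\textbf{Step 1: A one-sided recursion for the $\mathcal{D}_n^x$.}
On the event $\{T_n<\infty\}$, which has full $P_\eta$-measure by assumption, we have $\mathcal{D}_n^n = 0$ and, by \eqref{Dxn}, for every $0\le x\le n-1$,
\[
\mathcal{D}_n^x \;=\; F^{\,x}_{\mathcal{D}_n^{x+1}+1}.
\]
Iterating this recursion starting from $\mathcal{D}_n^n=0$ expresses $(\mathcal{D}_n^n,\mathcal{D}_n^{n-1},\dots,\mathcal{D}_n^0)$ as a deterministic measurable function $\Psi$ of the Bernoulli sequences $(\xi_{n-1},\xi_{n-2},\dots,\xi_0)$ only. (Importantly, although the trajectory of the walk itself depends on $\xi_x$ for $x<0$ as well, the tuple $(\mathcal{D}_n^n,\dots,\mathcal{D}_n^0)$ on $\{T_n<\infty\}$ does not, because \eqref{Dxn} uses the ``$+1$'' branch for $x\ge 0$ only.)

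\textbf{Step 2: The analogous recursion for the backward BLP.}
By definition \eqref{bbpdef}, $V_0=0$ and $V_i = F^{\,i}_{V_{i-1}+1}$ for $i\ge 1$, so that $(V_0,V_1,\dots,V_n)$ equals $\Psi(\xi_1,\xi_2,\dots,\xi_n)$ with the very same measurable map $\Psi$ used in Step~1. In other words, both sequences are obtained by applying the same deterministic dynamical system $m \mapsto F^{\,\bullet}_{m+1}$ to two finite strings of Bernoulli sequences, differing only in which sites' stacks are used.

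\textbf{Step 3: Matching the two laws via spatial i.i.d.-ness.}
Under $\P_\eta$ the Markov chains $\{\mathbf R^x\}_{x\in\Z}$ are i.i.d., hence so are the induced Bernoulli families $\{\xi_x\}_{x\in\Z}$. Consequently
\[
(\xi_{n-1},\xi_{n-2},\dots,\xi_0) \;\overset{\mathcal D}{=}\; (\xi_1,\xi_2,\dots,\xi_n),
\]
and applying $\Psi$ to both sides gives
\[
(\mathcal{D}_n^n,\mathcal{D}_n^{n-1},\dots,\mathcal{D}_n^0) \;\overset{\mathcal D}{=}\; (V_0,V_1,\dots,V_n)
\]
under $P_\eta$ on the left and $P_\eta^{V,0}$ on the right, as required.

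\textbf{Main obstacle.}
The only subtle point is Step~1: one must justify that, on $\{T_n<\infty\}$, the tuple $(\mathcal{D}_n^n,\dots,\mathcal{D}_n^0)$ really is a function of the Bernoulli stacks at sites $0,1,\dots,n-1$ alone, even though the random walk's path (and any excursions into $\{x<0\}$) genuinely depends on the $\xi_x$ for $x<0$. This reduces to verifying that \eqref{Dxn} in the $x\ge 0$ regime holds almost surely on $\{T_n<\infty\}$, which is the discussion immediately preceding \eqref{Dxn} applied site-by-site for $x=n-1,n-2,\dots,0$. Once this is in hand, the rest of the argument is purely a relabeling combined with the i.i.d.\ structure of the cookie environment.
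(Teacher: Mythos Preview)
Your proof is correct and is essentially the same argument as the paper's: the paper makes the relabeling explicit by setting $\xi_x^{(n)}(j)=\xi_{n-x}(j)$ and defining $V_i^{(n)}$ from these, which is precisely your map $\Psi$ applied to $(\xi_{n-1},\dots,\xi_0)$, and then appeals to the spatial i.i.d.\ structure exactly as you do in Step~3. Your ``main obstacle'' discussion is the right caveat and is handled in the paper by the derivation of \eqref{Dxn}.
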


\begin{proof}
  For $n\geq 1$ let $\{ \xi^{(n)}_x(j) \}_{x \in \Z, \, j\geq 1}$ be
  the family of Bernoulli random variables given by
  $\xi_x^{(n)}(j) = \xi_{n-x}(j)$ for $x\in \Z$ and $j\geq 1$, and let
  $V_i^{(n)}$ be the backward BLP started at $V_0^{(n)} = 0$ but
  defined using the Bernoulli family $\{\xi_x^{(n)}(j) \}$ in place of
  $\{\xi_x(j) \}$.  Then, it is clear from \eqref{Dxn} and
  \eqref{bbpdef} on the event $\{T_n < \infty \}$ that
  $\mathcal{D}_n^{n-i} = V_{i}^{(n)}$ for $i=0,1,\ldots,n$.  Finally,
  since the cookie environments are spatially i.i.d., it follows that
  $\{\xi_x^{(n)}(j)\}_{x\in \Z,\, j\geq 1}$ and
  $\{\xi_x(j)\}_{x\in \Z,\, j\geq 1}$ have the same distribution under
  the averaged measure $P_\eta = \E_\eta[ P_\w(\cdot) ]$.
\end{proof}

\subsection{Proofs of the main results in the critical case}\label{ssec:proofmain}

Here and throughout the remainder of the paper, we will use the following notation for hitting times of stochastic processes. If $\{Z_j\}_{j\geq 0}$ is a stochastic process, then for any $x\in\R$ let $\s_x^Z$ and $\tau_x^Z$ be the hitting times 
\begin{equation*}
 \sigma_x^Z:=\inf\{j > 0:\,Z_j\le x\}\ \ \text{and}\ \ \tau_x^Z:=\inf\{j\ge 0:\ Z_j\ge x\}.
\end{equation*}
Usually the stochastic process $Z$ will be either the forward or backward BLP, though occasionally we will also use this notation for other processes.

The analysis of the forward and backward BLP is key to the proofs of all the main results in this paper. 
In particular, all of the results in the critical case $\bar{p} = 1/2$ (Theorems \ref{th:rectran}--\ref{th:limdist}) will follow from the following two Theorems.  
\begin{thm}
  \label{th:fbp} Let $\bar{p}=1/2$ and let $\delta$ be given by (\ref{eq:del}). 
If $U$ is the forward BLP defined in (\ref{fbpdef}), then 
\begin{itemize}
 \item If $\d>1$ then $P_\eta^{U,y}(\sigma_0^U=\infty)>0$ for all $y\geq 1$. 
 \item If $\d\leq 1$ then for any $y \geq 1$ there are positive constants $\Cl[c]{fl}=\Cr{fl}(y,\eta)$ and $\Cl[c]{fp}=\Cr{fp}(y,\eta)$
  such that
\begin{equation}
   \label{fbptails}
   \lim_{n\to\infty}n^{1-\delta} P_\eta^{U,y}(\sigma_0^U>n)=\Cr{fl}\ \ \text{and}\ \ \lim_{n\to\infty}n^{(1-\delta)/2} P_\eta^{U,y}\left(\sum_{j=0}^{\sigma_0^U-1}U_j>n \right)=\Cr{fp},
 \end{equation}
where for $\delta=1$ we replace $n^0$ with $\ln n$.
\end{itemize}
\end{thm}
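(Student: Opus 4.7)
The plan is to derive both the survival probability and the tail estimates from a diffusion approximation that identifies the scaled forward BLP as a squared Bessel process of generalized dimension $2\delta$. The starting point is the asymptotic first and second moment information for $U$ that will be established in Sections~\ref{sec:meanvar}--\ref{sec:param}: under the averaged measure, $U$ is a time-homogeneous Markov chain with $E[U_{i+1} - U_i \mid U_i = k] \to \rho$ and $\mathrm{Var}(U_{i+1} \mid U_i = k)/k \to \nu$ as $k \to \infty$, where $\delta = 2\rho/\nu$. By standard diffusion approximation techniques (cf.\ Lemma~\ref{lem:DA} and \cite{kmLLCRW,kzEERW,DK14}), started from $U_0 = \lfloor y n \rfloor$ the rescaled process $t \mapsto U_{\lfloor nt \rfloor}/n$ converges weakly to the squared Bessel process $Z$ of dimension $d = 4\rho/\nu = 2\delta$ with $Z_0 = y$.

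For $\delta > 1$ the limit $Z$ lives in dimension $>2$ and never reaches $0$; in particular it stays above some fixed level $\varepsilon > 0$ on a unit time interval with probability bounded away from $0$. The diffusion approximation then gives $P_\eta^{U,\lfloor yn \rfloor}(\sigma_0^U > n)$ bounded below uniformly in $n$ for $y$ large, and the Markov property combined with ellipticity --- which allows $U$ started at any $y \geq 1$ to reach an arbitrarily large height before extinction with positive probability --- extends the survival statement to every initial condition. In the (sub)critical case $\delta \leq 1$ the limit has dimension $d \leq 2$ and reaches $0$ in finite time (or approaches $0$ arbitrarily closely when $d = 2$). Classical scale-function calculations for squared Bessel processes yield $P_y(\sigma_0^Z > t) \sim C(y)\, t^{\delta - 1}$ for $\delta < 1$ and $P_y(\sigma_0^Z > t) \sim C(y)/\log t$ for $\delta = 1$, together with analogous asymptotics carrying the exponent $(\delta - 1)/2$ for $\int_0^{\sigma_0^Z} Z_s \, ds$. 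By self-similarity of $Z$ these produce the correct functional form of the prelimit tails in \eqref{fbptails}.

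The principal difficulty is that weak convergence on a compact time window does not by itself pin down tail asymptotics on the full horizon $[0,n]$, so uniform control of $U$ both near $0$ and at large heights is needed. I would address this following the strategy of \cite{kmLLCRW,kzEERW,DK14}, combining (i) upper bounds obtained by optional stopping applied to discrete Lyapunov functions of the form $f(k) = k^{1-\delta}$, replaced by $\log k$ when $\delta = 1$ --- these are the discrete analogs of the Bessel scale functions, and are approximate martingales for $U$ by the mean/variance input above --- with (ii) matching lower bounds produced by invoking the diffusion approximation from the first time $U$ climbs to a large height $M$, and then sending $M \to \infty$ after $n \to \infty$. The same Lyapunov machinery, applied to weighted partial sums in place of the lifetime, should yield the total-progeny tail; the factor of $1/2$ in the exponent reflects the spatial-temporal scaling of the squared Bessel process (total mass accumulated over lifetime scales like (lifetime)$^2$).
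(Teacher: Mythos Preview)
Your proposal is correct and follows the same route as the paper: diffusion approximation to a squared Bessel process of dimension $2\delta$, the scale function $k^{1-\delta}$ (or $\log k$ when $\delta=1$) as an approximate martingale, and deferral to the machinery of \cite{kmLLCRW,DK14} for the conversion to precise tail asymptotics. One organizational difference worth noting: rather than your (i)/(ii) split into a Lyapunov upper bound and a diffusion-approximation lower bound, the paper (following \cite{kmLLCRW}) obtains matched two-sided estimates via a geometric-scale decomposition --- exit probabilities from annuli $(a^{j-1},a^{j+1})$ via optional stopping of the approximate martingale (Lemma~\ref{lem:exit}), overshoot control (Lemma~\ref{lem:OS}), and a coupling of the induced motion on levels $\{a^j\}$ with a birth--death chain (Lemma~\ref{lem:main}) --- which handles both directions at once and feeds into the lifetime and total-progeny tails. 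Your step (i) as stated directly controls $P(\tau_M^U<\sigma_0^U)$ rather than $P(\sigma_0^U>n)$; the geometric-scale structure is precisely what bridges the two, so you should expect to need it (or an equivalent) rather than a single global optional-stopping argument.
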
 

\begin{thm}
  \label{th:bbp} Let $\bar{p}=1/2$ and let $\delta$ be given by (\ref{eq:del}). 
  If $V$ is the backward BLP defined in (\ref{bbpdef}), then 
\begin{itemize}
 \item If $\delta<0$ then $P_\eta^{V,y}(\sigma_0^V=\infty)>0$ for all $y\geq 0$. 
 \item If $\d\geq 0$ then 
%
for any $y \geq 0$ there are positive constants $\Cl[c]{bl}=\Cr{bl}(y,\eta)$ and $\Cl[c]{bp}=\Cr{bp}(y,\eta)$
  such that
\begin{equation}
   \label{bbptails}
   \lim_{n\to\infty}n^\delta P_\eta^{V,y}(\sigma_0^V>n)=\Cr{bl}\ \ \text{and}\ \ \lim_{n\to\infty}n^{\delta/2} P_\eta^{V,y}\left(\sum_{j=0}^{\sigma_0^V-1}V_j>n \right)=\Cr{bp},
 \end{equation}
where for $\delta=0$ we replace $n^0$ with $\ln n$.
\end{itemize}
\end{thm}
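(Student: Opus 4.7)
The plan is to mirror the strategy used for Theorem~\ref{th:fbp}, applied to the backward BLP $V$ in place of $U$. The starting point is the asymptotic mean and variance of the one-step increments of $V$, derived in Section~\ref{sec:param} using the Markov structure of the cookie stacks: as $y\to\infty$, one has $E_\eta^{V,y}[V_1]-y\to$ a constant governed by $\delta$, while $\Var_\eta^{V,y}(V_1)\sim \nu y$. After a time change by $\nu/4$, these asymptotics identify a squared Bessel process $Z$ of dimension $d=2(1-\delta)$ as the correct diffusion limit of $V^N_t:=V_{\lfloor Nt\rfloor}/N$ under $P_\eta^{V,N}$. Establishing this diffusion approximation, parallel to Lemmas~\ref{lem:DA} and \ref{al}, uses a standard martingale characterization: verify convergence of the generators on smooth test functions and obtain tightness from the moment bounds of Section~\ref{sec:param}. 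The key dichotomy is that $\delta<0$ corresponds to $d>2$ (so $Z$ is transient), while $\delta\ge 0$ corresponds to $d\le 2$ (so $Z$ hits $0$ in finite time).

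In the case $\delta<0$, the diffusion approximation provides, for $N$ large, a uniform lower bound on the probability that $V$ starting from $N$ escapes to $+\infty$. Combined with the fact that ellipticity plus irreducibility force $V$ to reach an arbitrarily large level with positive probability starting from any $y\ge 0$, this yields $P_\eta^{V,y}(\sigma_0^V=\infty)>0$. In the case $\delta\ge 0$, the tail of $\sigma_0^V$ is to be read off from the classical gambler's ruin estimate for a squared Bessel process of dimension $d\le 2$: starting at level $z$, one has $P(\tau_0^Z>t)\sim C_d\, z^{1-d/2}\,t^{-(1-d/2)}$ as $t\to\infty$, with a logarithmic correction when $d=2$. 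Since $1-d/2=\delta$, this produces the claimed $n^{-\delta}$ (and $n^{0}/\ln n$ at $\delta=0$) behavior. The tail of $\sum_{j=0}^{\sigma_0^V-1}V_j$ then follows because this sum, after rescaling, approximates $\int_0^{\tau_0^Z}Z_s\, ds$, whose tail under Brownian scaling is of order $t^{-\delta/2}$.

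The main obstacle is not identifying the correct exponent but upgrading the diffusion approximation to a sharp tail asymptotic with a well-defined positive constant $\Cr{bl}(y,\eta)$ (and $\Cr{bp}(y,\eta)$). Weak convergence of $V^N$ alone only determines the exponent up to subpolynomial factors along subsequences; to obtain an honest limit one must combine the diffusion estimate at mesoscopic levels with separate control of the walk near zero, where ellipticity and hitting-time estimates for the explicit one-step transitions take over. The natural way to organize this is a renewal-type bootstrap: condition on the overshoot $V_i$ at the first time it crosses a large threshold $N$, apply the diffusion approximation from that point via the strong Markov property, use tightness to pass to the limit in the overshoot, and then sum over excursions of $V$ from the neighborhood of zero to such a threshold. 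The critical case $\delta=0$ is the most delicate because of the logarithmic correction and requires a more refined excursion analysis, mirroring the corresponding treatment of the critical forward BLP. These steps are carried out in Sections~\ref{sec:3lem} and \ref{sec:tails}.
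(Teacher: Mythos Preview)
Your outline is correct and aligns with the paper's strategy: diffusion approximation by a squared Bessel process of dimension $2(1-\delta)$ (Lemma~\ref{lem:DA}, valid only away from zero), the dichotomy on the sign of $\delta$, and the recognition that the real work is upgrading weak convergence to sharp tail asymptotics with an honest positive constant. Where your sketch diverges from the paper is in the mechanism for that upgrade. You describe a ``renewal-type bootstrap'' at a single large threshold $N$; the paper, following \cite{kmLLCRW,DK14}, instead works on a logarithmic scale. It shows that $V^\delta$ (or $\log V$ when $\delta=0$) is an approximate martingale, uses this to pin down exit probabilities from dyadic intervals $(a^{j-1},a^{j+1})$ (Lemma~\ref{lem:exit}) together with overshoot control (Lemma~\ref{lem:OS}), and then iterates via a coupling with a biased birth-death-like chain on the levels $\{a^j\}$ to obtain the two-sided gambler's-ruin bounds of Lemma~\ref{lem:main}. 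This multiscale ladder is what bridges the diffusion regime at scale $\sim n$ down to the microscopic behavior near zero and is what actually produces the constant; a single-threshold argument at level $N$ would give the exponent but not obviously the limit. The case $\delta<0$ is handled as you indicate, but again through Lemma~\ref{lem:main} (letting $u\to\infty$) rather than via the diffusion limit directly.
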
 
\begin{rem}
Analogous theorems are known for the ERW with bounded number of cookies per site, \cite[Theorems 2.1, 2.2]{kmLLCRW}, \cite[Theorem 1.3]{DK14}. 
\end{rem}

We will give the proofs of Theorems \ref{th:fbp} and \ref{th:bbp} in Sections \ref{sec:3lem} and \ref{sec:tails} below, 
but first we will show how they are used to prove Theorems \ref{th:rectran}--\ref{th:limdist}. 



\begin{proof}[Proof of Theorem \ref{th:rectran}]
For any cookie environment $\w$, let $\w^+$ be the modified cookie environment in which $\w^+_x(j) = \w_x(j)$ for $x\neq 0$, $j\geq 1$ and with $\w^+_0(j) = 1$ for all $j\geq 1$ (that is, in the cookie environment $\w^+$ the walk steps to the right after every visit to the origin). 
Recall that $\gamma_n$ is the time of the $n$-th return to the origin. Lemma \ref{lem:fbpExn} implies that 
\begin{equation}\label{gnfin}
 P_{\w^+}( \gamma_n < \infty ) = P_{\w^+}( \mathcal{E}_n^x = 0, \text{ for all $x$ sufficiently large} )
\leq P_{\w}^{U,n}( \s_0^U < \infty ). 
\end{equation}
(Note that in the last probability on the right we can change the cookie environment from $\w^+$ to $\w$ since the forward branching process is generated using the Bernoulli random variables $\xi_x(j)$ with $x\geq 1$.)
If instead $\gamma_n = \infty$, then since the $\w_x(j)$ are uniformly bounded away from $0$ and $1$ the walk cannot stay bounded for the first $n$ excursions to the right. Therefore, 
\begin{equation}\label{gninf}
  P_{\w^+}( \gamma_n = \infty ) = P_{\w^+}( \mathcal{E}_n^x \geq 1, \, \forall x\geq 1)
\leq 
P_\w^{U,n}(\s_0^U = \infty),
\end{equation}
where the last inequality follows from Lemma \ref{lem:fbpExn}. Combining \eqref{gnfin} and \eqref{gninf} we can conclude that 
\begin{equation}\label{gns0}
 P_{\w^+}( \gamma_n < \infty ) = P_{\w}^{U,n}( \s_0^U < \infty ), \quad \forall n\geq 1. 
\end{equation}

Suppose that $\d \leq 1$. 
Then it follows from \eqref{gns0} and Theorem \ref{th:fbp} that $E_\eta[ P_{\w^+}( \gamma_n < \infty ) ] = 1$ for all $n\geq 1$. That is, with probability one every excursion of the ERW to the right of the origin will eventually return to the origin. 
Similarly, if $\tilde\d \leq 1$ then all excursions to the left of the origin eventually return to the origin. 
Therefore, if $\d,\tilde\d \leq 1$ then all excursions from the origin are finite and so the walk returns to the origin infinitely many times. 
Since all $\w_x(j)$ are uniformly bounded away from $0$ and $1$ this implies that the walk visits every site infinitely often. 

If instead $\d>1$, then \eqref{gns0} and Theorem \ref{th:fbp} imply that $E_\eta[ P_{\w^+}( \gamma_1 = \infty ) ] > 0$, and thus
\begin{align*}
 0 < E_\eta[\w_0(1)] E_\eta[ P_{\w^+}( \gamma_1 = \infty ) ] 
&\leq P_\eta( X_n \geq 1, \, n\geq 1 ) 
\leq P_\eta\left( \lim_{n\ra\infty} X_n = \infty \right), 
\end{align*}
where the last inequality again follows from the fact that the $\w_x(j)$ are uniformly bounded away from $0$ and $1$. 
Finally, we can conclude by Theorem \ref{th:facts} that $\d>1$ implies that the walk is transient to the right with probability $1$. 
A similar argument shows that $\tilde\d>1$ implies that the walk is transient to the left. 
\end{proof}

\begin{proof}[Proof of Theorem \ref{th:LLN}]
Since the limiting speed $\vp$ exists by Theorem \ref{th:facts}\ref{th:slln}, if the walk is recurrent then the speed must be $v_0 = 0$. Thus, we need only to consider the case where the walk is transient. 
First, assume that the walk is transient to the right ($\d>1$). 
Since $T_n/n = T_n/X_{T_n}$, the existence of the limiting speed $\vp$ implies that $\lim_{n\ra\infty} T_n/n = 1/\vp$, where we use the convention $1/0 = \infty$. 
It is easy to see that $T_n = n + 2 \sum_{x<n} \mathcal{D}_n^x$ for every $n\geq 1$. Since the walk is transient to the right, 
\begin{equation}\label{Dxnneg}
 \lim_{n\ra\infty} 2 \sum_{x<0} \mathcal{D}_n^x \leq \sum_{k=0}^\infty \ind{X_k \leq -1} < \infty, 
\end{equation}
and thus
\[
 \frac{1}{\vp} = \lim_{n\ra\infty} \frac{T_n}{n} 
= \lim_{n\ra\infty} \frac{1}{n}\left(n + 2 \sum_{x<n} \mathcal{D}_n^x \right)
= 1 + \lim_{n\ra\infty} \frac{2}{n} \sum_{x=0}^{n-1} \mathcal{D}_n^x. 
\]
It follows from Lemma \ref{lem:bbpDxn} that $n^{-1} \sum_{x=0}^{n-1} \mathcal{D}_n^x$ has the same distribution as $n^{-1} \sum_{i=1}^n V_i$ started with $V_0 = 0$, and standard Markov chain arguments imply that
\[
 \lim_{n\ra\infty} \frac{1}{n} \sum_{i=1}^n V_i = \frac{ E_\eta^{V,0}\left[ \sum_{i=0}^{\s_0^V-1} V_i \right] }{ E_\eta^{V,0}[ \s_0^V ] }, \quad P_\eta^{V,0}\text{-a.s.} 
\]
Thus, we can conclude that 
\[
 \vp = \left( 1 + 2 \frac{ E_\eta^{V,0}\left[ \sum_{i=0}^{\s_0^V-1} V_i \right] }{ E_\eta^{V,0}[ \s_0^V ] } \right)^{-1} .
\]
Theorem \ref{th:bbp} implies that the $E_\eta^{V,0}[\s_0^V] < \infty$ (recall that $\d>1$ since the walk is transient to the right) and that $E_\eta^{V,0}[ \sum_{i=0}^{\s_0^V-1} V_i ] < \infty$ if and only if $\d>2$. 
Thus we can conclude that $\vp>0 \iff \d>2$. 
Again, a symmetric argument for random walks that are transient to the left shows that $\vp<0 \iff \tilde{\d} > 2$. 
\end{proof}

\begin{proof}[Proof of Theorem \ref{th:limdist}]
  The proofs of the limiting distributions in Theorem \ref{th:limdist}
  relies on the connection of the hitting times $T_n$ with the
  backward BLP $V_i$ from Lemma \ref{lem:bbpDxn} and the tail
  asymptotics for the backward BLP in \eqref{bbptails}.  We will give
  a brief sketch of the general argument here and will give full
  details in the case $\d=2$ in Appendix \ref{sec:delta2}.  We will
  refer the reader to previous papers for the details in all other
  cases.

For the limiting distributions of the hitting times $T_n$, recall that 
\begin{equation}\label{TnDxn}
 T_n = n + 2\sum_{x=0}^{n-1} \mathcal{D}_n^x + 2\sum_{x<0} \mathcal{D}_n^x. 
\end{equation}
It follows from \eqref{Dxnneg} that the third term on the right has a finite limit as $n\ra\infty$, $P_\eta$-a.s., 
and therefore it is enough to prove a limiting distribution for the first two terms on the right side of \eqref{TnDxn}. 
By Lemma \ref{lem:bbpDxn} this is equivalent to proving a limiting distribution for $n + 2\sum_{i=1}^n V_i$ under the measure $P_\eta^{V,0}$. 
The proof of the limiting distribution for the partial sums of the BLP relies on the regeneration structure of the process $V_i$. 
Let $r_k$ denote the time of the $k$-th return of the backward BLP to zero. That is, 
\[
 r_0 = 0 \quad \text{and} \quad r_k = \inf \{i>r_{k-1}: \, V_i = 0 \}.  
\]
Also, let $W_k = \sum_{i=r_{k-1}}^{r_k-1} V_i$. Note that $\{(r_k-r_{k-1}, W_k)\}_{k\geq 1}$ is an i.i.d.\ sequence under the measure $P_\eta^{V,0}$ and that Theorem \ref{th:bbp} implies that $r_1$ and $W_1$ are in the domains of attraction of totally asymmetric stable distributions of index $\min\{ \d, 2 \}$ and $ \min\{ \d/2, 2 \}$, respectively. 
From this, the limiting distributions for $n+2\sum_{i=1}^n V_i$ are standard.
For the details of the arguments in cases \ref{ldcase1}-\ref{ldcase5} we give the following references. 
\begin{itemize}
\item $\d \in (1,2)$: See \cite{bsRGCRW}, pages 847--849.
\item $\d = 2$: See Appendix \ref{sec:delta2}.
\item $\d \in (2,4]$: See Section 9 in \cite{kmLLCRW}.
\item $\d>4$: By (\ref{bbptails}) the second moment of the random variable
$\sum_{j=0}^{\sigma_0^V-1}V_j$ is finite. Thus, this is a classical
case covered by the standard CLT for the Markov chain $V$; see, for
example, \cite[I.16, Theorem 1]{cklMC}.\footnote{For the second
  statement of (v) see also \cite{kzPNERW}, proof of Theorem 3 and
  Section 6. The argument is based on the result due to A.-S.\
  Sznitman \cite[Theorem 4.1]{szSDRWRE} and gives the functional CLT
  for the position of the walk.}
\end{itemize}

Finally, to obtain the limiting distributions for the position $X_n$ of the ERW from the limiting distributions of the hitting times $T_n$, we use the fact that 
\begin{equation}\label{TXT}
 \{ T_m > n \} \subset \{ X_n < m \} \subset \{ T_{m+r} > n \} \cup \left\{ \inf_{k \geq T_{m+r}} X_k < m \right\},  
\end{equation}
for any $m,n,r\geq 1$. 
The key then is to control the probability of the last event on the right. To this end, it was shown in \cite[Lemma 6.1]{pERWLDP} that the tail asymptotics for $r_1$ in \eqref{bbptails} imply that 
\begin{equation}\label{backtrack}
 P_\eta\left( \inf_{k \geq T_{m+r}} X_k < m \right) \leq C r^{1-\d}, \quad \forall m,r \geq 1. 
\end{equation}
Again, for the details of how to use \eqref{TXT} and \eqref{backtrack} to obtain the limiting distributions for $X_n$, see the references given above. 
\end{proof}

\section{Mean and variance of the forward BLP}\label{sec:meanvar}

Many of the calculations below are simplified using matrix notation. 
\begin{itemize}
 \item $\one$ will denote a column vector of all ones, and $\mathbf{p}$ denotes a column vector with $i$-th entry $p(i)$. 
 \item $I$ is the identity matrix. 
 \item $D_p$ will denote a diagonal matrix with $i$-th diagonal entry $p(i)$. Similarly $D_{1-p} = I - D_p$ is the diagonal matrix with $i$-th diagonal entry $1-p(i)$. 
 \item Recall that $\bar{p}=\mu\cdot \mathbf{p}$, where $\mu$ is the stationary distribution for the environment Markov chain.
\end{itemize}
In this section and the next section, we will be concerned with a single increment of the BLP. Thus, we will only need to consider the environment at any fixed site $x\in \Z$. Therefore, in this section we will fix $x$ and suppress the sub/super-script $x$ for a less cumbersome notation. For instance, we will write $R_j = R_j^x$ for the Markov chain which generates the environment at $x$ and the Bernoulli sequence is denoted $\{\xi(j)\}_{j\geq 1} = \{\xi_x(j)\}_{j\geq 1}$. 

\subsection{Mean}
\begin{prop}\label{pr:Umean}
For every distribution $\eta$ on ${\cal R}$
 \[
 \lim_{n\ra\infty} \frac{E_\eta^{U,n}[U_1]}{n} =
 \frac{\bar{p}}{1-\bar{p}}=:\lambda.
 \]
 Moreover, there exist constants
 $\Cl[c]{ml}, \Cl[c]{mu} > 0$ such that for any $n\in\N$ and any distribution $\eta$
 on $\mathcal{R}$
\begin{equation}\label{meanlimit}
 \left| E_{\eta}^{U,n}[U_1] - \lambda n - \eta \cdot \mathbf{r} \right| \leq \Cr{ml} e^{-\Cr{mu} n},
\end{equation}
where 
\begin{equation}
  \label{eq:rform}
  \mathbf{r}=\mathbf{r}(\mathbf{p},K)=\left(I-K+\frac{(\mathbf{1-p})\mu D_{1-p}K}{1-\bar{p}}\right)^{-1}\mathbf{p}-\lambda\mathbf{1}.
\end{equation}
\end{prop}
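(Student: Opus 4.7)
The plan is to decompose $U_1 = S_n = \sum_{m=1}^{n}G_m$, where for $m\ge 1$ we let $\tau_m$ denote the index of the $m$-th failure in the Bernoulli sequence $(\xi(j))_{j\ge 1}$ (with $\tau_0 := 0$) and $G_m := \tau_m - \tau_{m-1}-1$ is the number of successes in the $m$-th block. Two elementary matrix computations summarize the structure of these blocks: (i) if the block starts in state $i$, i.e.\ $R_{\tau_{m-1}+1}=i$, then summing the geometric series over the block length gives $h(i):=E_i[G_1] = \bigl[e_i(I-D_pK)^{-1}\mathbf{p}\bigr]_i$, and (ii) the sequence of block-start states $(R_{\tau_{m-1}+1})_{m\ge 1}$ is itself a Markov chain on $\mathcal{R}$ whose transition matrix is $\tilde{K}:=(I-D_pK)^{-1}D_{1-p}K$. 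A short calculation confirms that $\tilde{K}\mathbf{1}=\mathbf{1}$ and that its unique stationary distribution is $\tilde{\mu}:=\mu D_{1-p}K/(1-\bar p)$; moreover, setting $y=(I-D_pK)^{-1}\mathbf{p}$ and using $\mu K=\mu$ together with $(I-D_pK)y=\mathbf{p}$ one gets $\mu D_{1-p}Ky=\mu y - \mu D_pKy = \mu\mathbf{p}=\bar p$, so $\tilde{\mu}\mathbf{h}=\bar p/(1-\bar p)=\lambda$.

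Conditioning on the first block and writing $u_n$ for the column vector indexed by $\mathcal{R}$ with entries $u_n(i):=E_i^{U,n}[U_1]-n\lambda$ yields the linear recursion $u_n=(\mathbf{h}-\lambda\mathbf{1})+\tilde{K}u_{n-1}$ with $u_0=0$, and hence
\begin{equation*}
u_n=\sum_{k=0}^{n-1}\tilde{K}^{k}(\mathbf{h}-\lambda\mathbf{1}).
\end{equation*}
Because $\tilde{\mu}(\mathbf{h}-\lambda\mathbf{1})=0$, the forcing lies in the ``mean-zero'' subspace of $\tilde{K}$. Ellipticity, together with the assumption that $K$ has a unique closed irreducible class $\mathcal{R}_0$, is inherited by $\tilde{K}$ (transient states of $K$ remain transient for $\tilde{K}$, and aperiodicity of $\tilde{K}|_{\mathcal{R}_0}$ follows from the fact that failures occur with positive probability at every state), so standard Perron--Frobenius arguments for finite Markov chains yield constants $C>0$ and $\rho\in(0,1)$ with $\|\tilde{K}^{k}-\mathbf{1}\tilde{\mu}\|\le C\rho^{k}$. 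Consequently $u_n$ converges geometrically to $\mathbf{r}:=\sum_{k\ge 0}\tilde{K}^{k}(\mathbf{h}-\lambda\mathbf{1})$, with $\|u_n-\mathbf{r}\|\le C'\rho^{n}$ uniformly in the starting state; averaging against any $\eta$ gives \eqref{meanlimit}, from which $\lim_n E_\eta^{U,n}[U_1]/n=\lambda$ is immediate.

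To match $\mathbf{r}$ with the closed form \eqref{eq:rform}, I would invoke the fundamental-matrix identity $\sum_{k\ge 0}(\tilde{K}^k-\mathbf{1}\tilde{\mu})=(I-\tilde{K}+\mathbf{1}\tilde{\mu})^{-1}-\mathbf{1}\tilde{\mu}$, together with $\tilde{\mu}(\mathbf{h}-\lambda\mathbf{1})=0$ and $(I-\tilde{K}+\mathbf{1}\tilde{\mu})\mathbf{1}=\mathbf{1}$, to rewrite $\mathbf{r}=(I-\tilde{K}+\mathbf{1}\tilde{\mu})^{-1}\mathbf{h}-\lambda\mathbf{1}$. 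Using $(I-D_pK)\tilde{K}=D_{1-p}K$ and $(I-D_pK)\mathbf{1}=\mathbf{1}-\mathbf{p}$ one checks the matrix identity
\begin{equation*}
(I-D_pK)(I-\tilde{K}+\mathbf{1}\tilde{\mu})=I-K+(\mathbf{1}-\mathbf{p})\tilde{\mu},
\end{equation*}
and substituting $\mathbf{h}=(I-D_pK)^{-1}\mathbf{p}$ and $\tilde{\mu}=\mu D_{1-p}K/(1-\bar p)$ yields
$\mathbf{r}=M^{-1}\mathbf{p}-\lambda\mathbf{1}$ with $M=I-K+(\mathbf{1}-\mathbf{p})\mu D_{1-p}K/(1-\bar p)$, matching \eqref{eq:rform}.

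The main obstacle I foresee is the clean justification of the uniform spectral-gap estimate $\|\tilde{K}^k-\mathbf{1}\tilde{\mu}\|\le C\rho^k$ under the paper's hypothesis that $K$ merely has a single closed irreducible class. Once this mixing rate is in hand, the derivation of \eqref{meanlimit} is the routine geometric-tail bound on $u_n-\mathbf{r}$, and the agreement with the explicit formula \eqref{eq:rform} reduces to the finite-dimensional matrix identity above.
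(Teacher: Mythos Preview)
Your proof is correct and follows essentially the same route as the paper: the paper introduces the block-start chain $I_m$ with transition matrix $\Pi=(I-D_pK)^{-1}D_{1-p}K$ (your $\tilde K$), stationary law $\pi=\mu(I-D_pK)/(1-\bar p)=\mu D_{1-p}K/(1-\bar p)$ (your $\tilde\mu$), computes $\mathbf g=(I-D_pK)^{-1}\mathbf p$ (your $\mathbf h$), and then sums $\sum_{m=0}^{n-1}(\Pi^m-\mathbf 1\pi)\mathbf g$ using the same fundamental-matrix identity and the same simplification to reach \eqref{eq:rform}. Regarding your stated obstacle, the paper disposes of the spectral gap exactly as you suggest: ellipticity forces $\Pi$ to share the unique closed irreducible class $\mathcal R_0$ with $K$ and to be aperiodic on it (a failure can occur immediately, giving a positive diagonal), so exponential convergence $\|\eta\Pi^n-\pi\|_\infty\le Ce^{-cn}$ holds on the finite state space.
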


\begin{proof}
Recall that $S_m$ is the number of successes before the $m$-th failure in the Bernoulli sequence $\xi = \{\xi(j)\}_{j\geq 1}$, and let $G_m = S_m - S_{m-1}$ be the number of successes between the $(m-1)$-st and the $m$-th failure in the Bernoulli sequence $\xi$. 
With this notation it follows from the construction of the forward BLP in Section \ref{sec:branching} that 
\begin{equation}\label{Umean}
 E_\eta^{U,n}[U_1 ] = \sum_{m=1}^n E_\eta[G_m].
\end{equation}
To compute $E_\eta[G_m]$ it helps to keep track of some additional information. 
Let $I_0 = R_1$ and for any $m\geq 1$ let $I_m$ be defined by
\[
 I_m =  R_{S_m +m+1}. 
\]
Note that $S_m + m$ is the number of Bernoulli trials needed to obtain $m$ failures. Therefore, $I_m = i$ if the next 
Bernoulli random variable after the $m$-th failure has success probability $p(i)$.  
Since 
\[
P_\eta( \{\xi(j)\}_{j \geq S_m + m + 1} \in \cdot \, | \, \s( \xi(j), \, j\leq S_m + m + 1) ) = P_{I_m}( \{\xi(j)\}_{j\geq 1} \in \cdot), 
\]
it follows that $\{I_m\}_{m\geq 0}$ is a Markov chain on the state
space $\mathcal{R}$. By the ellipticity assumption,
$p:{\cal R}\to (0,1)$, the Markov chain $\{I_m\}_{m\geq 0}$ has the
same unique closed irreducible subset $\mathcal{R}_0$ as the Markov
chain $\{R_j\}_{j\geq 1}$. Therefore, $\{I_m\}_{m\geq 0}$ has a unique
stationary distribution $\pi$. While we did not assume any
aperiodicity for $\{R_j\}_{j\ge 0}$, the ellipticity assumption
implies that $\{I_m\}_{m\geq 0}$ is aperiodic, and since ${\cal R}$ is
finite the convergence to stationarity is exponentially fast: 
if $\Pi$ is the matrix of transition probabilities for the Markov chain $I_m$, then
there
exist constants $\Cl[c]{sl},\Cl[c]{su}>0$ such that
\begin{equation}\label{pierror}
  \sup_{\eta} \| \eta \Pi^n - \pi \|_\infty \leq \Cr{sl} e^{-\Cr{su} n}, \quad \forall n\geq 1,
\end{equation}
where the supremum on the left is over all probability measures $\eta$ on $\mathcal{R}$.

Let $\mathbf{g}$ denote the column vector of length $N$ with $i$-th
entry $g(i) = E_i[G_1]$.  Note that by the comparison with 
a geometric random variable with parameter
$p_{\text{max}}:=\max_{i\in{\cal R}} p(i)<1$ we see that
$g(i)<p_{\text{max}}/(1-p_{\text{max}})<\infty$ for every $i \in \mathcal{R}$.  Then, since
$E_{\eta}[ G_m ] = E_{\eta}[ g(I_{m-1}) ]$ it follows from an ergodic
theorem for finite state Markov chains that
\[
 \lim_{n\ra\infty} \frac{1}{n} E_\eta^{U,n}[U_1]=\lim_{n\ra\infty} \frac{1}{n} \sum_{m=1}^{n}E_{\eta}[ G_m ] = E_\pi[ g(I_0) ] = \sum_{i \in \mathcal{R}} \pi(i) g(i) = \pi \cdot \mathbf{g}.  
\]
Therefore, to prove the first part of the lemma we need to show that
$\pi\cdot\mathbf{g}=\lambda$.
The following lemma accomplishes this task. It also provides useful
information about $\{I_m\}_{m\ge 0}$ which we need in the rest of this section.
\begin{lem}\label{lem:Pipi}
The sequence $\{I_m\}_{m\geq 0}$ is a Markov chain 
with transition probabilities given by the matrix
\[
 \Pi 
= (I-D_p K)^{-1}D_{1-p}K, 
\]
and with a unique stationary distribution $\pi$ given by 
\begin{equation}\label{piform}
  \pi 
= \frac{\mu(I-D_p K)}{1-\bar{p}}
= (1+\l) \mu(I-D_p K). 
\end{equation}
Moreover,
\begin{equation}
  \label{eq:gform}
  \mathbf{g}
= (I-D_p K)^{-1} \mathbf{p}\quad\text{and}\quad \pi\cdot\mathbf{g}=\lambda. 
\end{equation}
\end{lem}
\begin{rem}

In the first formula for $\pi$ in \eqref{piform}, the multiplicative factor $1/(1-\bar{p})$ is needed for the entries of $\pi$ to sum to 1 since 
 $\mu(I-D_p K) \one = \mu(\one - \mathbf{p})  = 1-\bar{p}$. 
The second formula for $\pi$ in \eqref{piform} is equivalent since $\l = \frac{\bar{p}}{1-\bar{p}}$ implies that $\frac{1}{1-\bar{p}} = 1+\l$. 
\end{rem}

\begin{proof}
To compute the transition probabilities, for $k\geq 0$ let $M_k$ be the $N\times N$ matrix with entries
\[
 M_k(i,i') = P_i(G_1 = k, I_1 = i'). 
\]
Obviously, $M_0(i,i') = (1-p_i)K_{i,i'} = (D_{1-p}K)_{i,i'}$ and for $k\geq 1$ by conditioning on the value of $R_2$ we obtain that
\[
 M_k(i,i') = \sum_{\ell=1}^N p_i K_{i,\ell} M_{k-1}(\ell,i'). 
\]
That is, $M_0 = D_{1-p} K$ and $M_k = D_p K M_{k-1}$ for $k\geq 1$. 
Combining these we get that $M_k = (D_p K)^k D_{1-p} K$. 
Since $\Pi(i,i') = \sum_{k\geq 0} M_k(i,i')$, we have that 
\[
\Pi = \sum_{k=0}^\infty M_k=\sum_{k=0}^\infty (D_p K)^k D_{1-p}K =
(I-D_p K)^{-1}D_{1-p}K.
\]
(Note that since $D_p K $ is a matrix with non-negative entries and with $i$-th row sum equal to $p(i)<1$, the Perron-Frobenius theorem implies that all eigenvalues of $D_p K$ have absolute value strictly less than one, and thus $I-D_p K$ is invertible.)
It it is easy to check that $\pi \Pi = \pi$ by noting that $\mu K = \mu$ and, thus,
\begin{equation}\label{muDpK}
 \mu D_{1-p} K = \mu (I-D_p)K = \mu(I-D_p K).
\end{equation}

Finally, we give a formula for $\mathbf{g}$. For $k\geq 1$ one easily
sees that
\[P_i(G_1 \geq k) = \sum_{i_1 \in \mathcal{R}} p(i) K_{i,i_1}
P_{i_1}(G_1 \geq k-1).\] Iterating this, we obtain
\begin{equation}\label{Gtail}
 P_i(G_1 \geq k) = \sum_{i_1,i_2,\ldots,i_k \in \mathcal{R}} (p(i) K_{i,i_1})(p(i_1) K_{i_1,i_2})\cdots(p(i_{k-1}) K_{i_{k-1},i_k} ) = e_i (D_p K)^k \mathbf{1}, 
\end{equation}
where in the last equality we use the notation $e_i$ for the row vector with a one in the $i$-th coordinate and zeros elsewhere. Therefore,
\[
 g(i) = E_i[G_1] = \sum_{k\geq 1} P_i(G_1 \geq k)= \sum_{k\geq 1} e_i (D_p K)^k \mathbf{1} = e_i (I-D_p K)^{-1} D_p K \one = e_i (I-D_p K)^{-1} \mathbf{p} 
\]
and we get (\ref{eq:gform}) as claimed. From this and the formula for $\pi$ in \eqref{piform} it follows immediately that $\pi \cdot \mathbf{g} = \l$. 
\end{proof}
In the critical case $\bar{p} = 1/2$, Lemma \ref{lem:Pipi} and \eqref{muDpK} give the following simpler formula for the stationary distribution $\pi$ that will be useful below. 
\begin{cor}\label{picrit}
  If $\bar{p} = 1/2$ then 
  $\pi = 2 \mu(I-D_p K) = 2\mu D_{1-p} K$.
\end{cor}

Thus far we have proved the first part of Proposition~\ref{pr:Umean}. Next we show the existence of a vector $\mathbf{r}$ such that \eqref{meanlimit} holds.  
To this end, for any $n\geq 1$ let $\mathbf{r}_n = (r_n(1),r_n(2),\ldots,r_n(N))^t$ be the column vector with $i$-th entry \[r_n(i)=  E_i^{U,n}[ U_1] - \lambda n
= \sum_{m=1}^n \left( E_i[G_m] - \lambda \right).\] Then
\begin{equation*}
  \mathbf{r}_n = \sum_{m=1}^n \left( \Pi^{m-1} \mathbf{g} - \one \pi\cdot \mathbf{g} \right) 
= \sum_{m=0}^{n-1} \left( \Pi^{m} - \one \pi \right) \mathbf{g},
\end{equation*}
where in the last equality the matrix $\one \pi$ is the $N\times N$
matrix with all rows equal to the vector $\pi$ which is the stationary
distribution for $\{I_m\}_{m\ge 0}$.  It follows from \eqref{pierror} that the
entries of $\Pi^m - \one \pi$ decrease exponentially in $m$, so that
the sum in the last line converges as $n\ra\infty$. That is,
\begin{equation}\label{rnlim}
 \lim_{n\ra\infty} \mathbf{r}_n = \mathbf{r} := \sum_{m=0}^{\infty} \left( \Pi^{m} - \one \pi \right) \mathbf{g}. 
\end{equation}
Moreover, since
$E_{\eta}^{U,n}[ U_1 ] - \lambda n = \eta \cdot
\mathbf{r}_n$, then
\[
  \left|E_{\eta}^{U,n}[ U_1 ] - \lambda n - \eta \cdot \mathbf{r} \right| 
= \left| \eta \cdot \left( \mathbf{r}_n - \mathbf{r} \right) \right| 
\leq \sum_{m=n}^\infty \left\| \eta \Pi^m - \pi \right\|_\infty \|\mathbf{g}\|_1 \leq 
\frac{\|\mathbf{g}\|_1 \Cr{sl}}{1-e^{-\Cr{su}}} e^{-\Cr{su} n},
\]
where the last inequality follows from \eqref{pierror}. 

Finally, we give an explicit formula for the vector $\mathbf{r}$. To this end, since $\pi \Pi = \pi$ it follows that $(\one \pi) \Pi = \one \pi  = \Pi (\one \pi)$ and thus $(\Pi^m - \one \pi) = (\Pi - \one \pi)^m$ for all $m\geq 1$. Therefore, 
\begin{equation}\label{eq:rform0}
  \mathbf{r} = (I-\mathbf{1}\pi)\mathbf{g} + \sum_{m=1}^{\infty} \left( \Pi - \one \pi \right)^m \mathbf{g}
  = \sum_{m=0}^{\infty} \left( \Pi - \one \pi \right)^m \mathbf{g}-(\pi\cdot\mathbf{g})\mathbf{1}= (I - \Pi + \one \pi )^{-1} \mathbf{g}-\lambda\mathbf{1}.
\end{equation}
Substituting the expressions for $\pi,\ \Pi$, and $\mathbf{g}$  and simplifying we obtain (\ref{eq:rform}).
\end{proof}
In closing this subsection, we note the following Corollary which will be of use later. 
\begin{cor}\label{pir}
  $\pi\cdot \mathbf{r}=0$.
\end{cor}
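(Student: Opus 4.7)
The plan is to read off the conclusion directly from the series representation of $\mathbf{r}$ derived in the proof of Proposition~\ref{pr:Umean}. Recall from \eqref{rnlim} that
\[
\mathbf{r} = \sum_{m=0}^{\infty} \left( \Pi^m - \mathbf{1}\pi \right) \mathbf{g},
\]
and that the partial sums converge exponentially fast by \eqref{pierror}, so we may freely interchange left-multiplication by $\pi$ with the infinite sum.

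The key observation is that every summand is annihilated by $\pi$ on the left. Indeed, stationarity gives $\pi \Pi = \pi$ and hence $\pi \Pi^m = \pi$ for all $m \geq 0$. On the other hand, since $\pi$ is a probability distribution, $\pi \cdot \mathbf{1} = 1$, so $\pi (\mathbf{1}\pi) = (\pi \cdot \mathbf{1})\pi = \pi$. Therefore
\[
\pi \left( \Pi^m - \mathbf{1}\pi \right) = \pi - \pi = \mathbf{0}, \qquad m \geq 0,
\]
and summing over $m$ (which is legitimate by the exponential decay noted above) yields $\pi \cdot \mathbf{r} = 0$.

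There is essentially no obstacle here; the only thing to check is the interchange of the sum and the vector multiplication, which is immediate from \eqref{pierror}. As a sanity check one can also verify the identity from the closed form $\mathbf{r} = (I - \Pi + \mathbf{1}\pi)^{-1}\mathbf{g} - \lambda \mathbf{1}$ in \eqref{eq:rform0}: since $\pi(I - \Pi + \mathbf{1}\pi) = \pi - \pi + \pi = \pi$, right-multiplying by $(I - \Pi + \mathbf{1}\pi)^{-1}$ gives $\pi (I - \Pi + \mathbf{1}\pi)^{-1} = \pi$, so $\pi \cdot \mathbf{r} = \pi \cdot \mathbf{g} - \lambda(\pi \cdot \mathbf{1}) = \lambda - \lambda = 0$ by \eqref{eq:gform}.
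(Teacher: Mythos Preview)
Your proposal is correct; in fact your ``sanity check'' via the closed form \eqref{eq:rform0} is exactly the paper's proof, while your primary argument via the series \eqref{rnlim} is an equally valid (and perhaps more transparent) alternative exploiting the same stationarity identity $\pi\Pi^m=\pi$.
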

\begin{proof}

 Since $\pi$ is the stationary distribution for the Markov chain with transition matrix $\Pi$ it follows that $\pi(I-\Pi+\one \pi) = \pi$, or equivalently $\pi(I-\Pi+\one\pi)^{-1} = \pi$. Using the formula for $\mathbf{r}$ in \eqref{eq:rform0}, it follows that 
 $\pi \cdot \mathbf{r} = \pi(I-\Pi+\one\pi)^{-1}\mathbf{g} - \l \pi \cdot \one = \pi \cdot \mathbf{g} - \l = 0$, where the last equality follows from \eqref{eq:gform}. 

\end{proof}

\subsection{Variance}
The main result of this subsection is the following proposition. 
\begin{prop}\label{varerror}
  There exists a constant $\Cl[c]{v}>0$ such that for any distribution
  $\eta$ on ${\cal R}$
\[
 \left| \frac{\Var_\eta( U_1 | \, U_0 = n)}{n} - \nu \right| \leq \frac{\Cr{v}}{n}, 
\]
where the constant $\nu$ is given by the formula 
\begin{equation}\label{nudef}
  \nu=\nu(\mathbf{p},K)=\Var_{\pi}(G_1) + 2 \sum_{k=1}^{\infty} 
\Cov_{\pi}(G_1,G_{1+k}) 
=(1+\l)(\l  + 2 \mu  D_p K \mathbf{r}),
\end{equation}
and $\mathbf{r}$ is the vector from Proposition \ref{pr:Umean}. 
In particular, if $\bar{p}=1/2$ then 
\begin{equation}
  \label{nucrit}
  \nu = 2+4\mu D_pK \mathbf{r}=4\mu D_pK(I-K+2\mathbf{(1-p)}\mu D_{1-p}K)^{-1}\mathbf{p}.
\end{equation}
\end{prop}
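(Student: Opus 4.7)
The plan is to split the argument in two: first establish the $O(1/n)$ rate via a stationary approximation of $U_1=\sum_{m=1}^n G_m$, then derive the closed form of $\nu$ algebraically.

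Under $P_\eta^{U,n}$,
\begin{equation*}
\Var_\eta(U_1\mid U_0=n) = \sum_{m=1}^n \Var_\eta(G_m) + 2\sum_{k=1}^{n-1}\sum_{m=1}^{n-k}\Cov_\eta(G_m,G_{m+k}).
\end{equation*}
Using the Markov chain $\{I_m\}$ from the proof of Proposition~\ref{pr:Umean}, I would introduce the matrix $A$ with $A(i,j):=E_i[G_1\mathbf{1}\{I_1=j\}]$; summing $kM_k=k(D_pK)^kD_{1-p}K$ against $k\ge 1$ yields $A=D_pK(I-D_pK)^{-2}D_{1-p}K$. Conditioning on $I_{m-1}$ (and, for the covariance, first on $\mathcal{F}_m$ to produce $(\Pi^{k-1}\mathbf{g})(I_m)$) gives
\begin{equation*}
\Var_\eta(G_m)=\eta\Pi^{m-1}\mathbf{h}-(\eta\Pi^{m-1}\mathbf{g})^2,\quad \Cov_\eta(G_m,G_{m+k}) = \eta\Pi^{m-1}A\Pi^{k-1}\mathbf{g} - (\eta\Pi^{m-1}\mathbf{g})(\eta\Pi^{m+k-1}\mathbf{g}),
\end{equation*}
where $\mathbf{h}(i):=E_i[G_1^2]$. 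The exponential convergence (\ref{pierror}) and its column-vector analogue $\|\Pi^{k-1}\mathbf{g}-\lambda\mathbf{1}\|_\infty\le Ce^{-ck}$ (obtained by applying (\ref{pierror}) to each $e_i$ and expanding $(\Pi^{k-1}-\mathbf{1}\pi)\mathbf{g}$) give $|\Var_\eta(G_m)-\Var_\pi(G_1)|\le Ce^{-cm}$; rearranging the difference $\Cov_\eta-\Cov_\pi$ into summands each containing either $(\eta\Pi^{m-1}-\pi)$ or $(\Pi^{k-1}\mathbf{g}-\lambda\mathbf{1})$ yields the uniform bound $|\Cov_\eta(G_m,G_{m+k})-\Cov_\pi(G_1,G_{1+k})|\le Ce^{-c(m+k)}$. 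Combined with $|\Cov_\pi(G_1,G_{1+k})|\le Ce^{-ck}$, summation gives $\Var_\eta(U_1\mid U_0=n) = n\,\nu+O(1)$ with $\nu:=\Var_\pi(G_1)+2\sum_{k=1}^\infty\Cov_\pi(G_1,G_{1+k})$, which is the desired rate estimate.

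For the closed form, I would write $\Cov_\pi(G_1,G_{1+k})=\pi A(\Pi^{k-1}\mathbf{g}-\lambda\mathbf{1})$ (using $A\mathbf{1}=\mathbf{g}$ and $\pi\mathbf{g}=\lambda$) and apply (\ref{rnlim}) to get $\sum_{k=1}^\infty\Cov_\pi(G_1,G_{1+k})=\pi A\mathbf{r}$. Combining the expression for $A$ with $\pi=(1+\lambda)\mu(I-D_pK)$ from Lemma~\ref{lem:Pipi} and the commutativity of polynomials in $D_pK$ produces the key identity
\begin{equation*}
\pi A = (1+\lambda)\mu D_pK\,\Pi.
\end{equation*}
The Poisson relation $(I-\Pi)\mathbf{r}=\mathbf{g}-\lambda\mathbf{1}$ follows from (\ref{eq:rform0}) and $\pi(I-\Pi+\mathbf{1}\pi)^{-1}=\pi$, and $\mu D_pK\mathbf{1}=\bar p=\lambda/(1+\lambda)$ is immediate, so
\begin{equation*}
2\pi A\mathbf{r} = 2(1+\lambda)\mu D_pK\mathbf{r}-2(1+\lambda)\mu D_pK\mathbf{g}+2\lambda^2.
\end{equation*}

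For $\Var_\pi(G_1)=\pi\mathbf{h}-\lambda^2$, conditioning $G_1^2$ on the outcome of the first Bernoulli trial and the resulting transition $R_2$ gives the recursion $\mathbf{h}=\mathbf{p}+2D_pK\mathbf{g}+D_pK\mathbf{h}$, i.e., $(I-D_pK)\mathbf{h}=\mathbf{p}+2D_pK\mathbf{g}$; left-multiplying by $\pi=(1+\lambda)\mu(I-D_pK)$ and using $\mu\mathbf{p}=\bar p$ yields $\pi\mathbf{h}=\lambda+2(1+\lambda)\mu D_pK\mathbf{g}$. Substituting into $\nu=\pi\mathbf{h}-\lambda^2+2\pi A\mathbf{r}$, the $\pm 2(1+\lambda)\mu D_pK\mathbf{g}$ and $\pm\lambda^2$ terms cancel, leaving $\nu=\lambda(1+\lambda)+2(1+\lambda)\mu D_pK\mathbf{r}=(1+\lambda)(\lambda+2\mu D_pK\mathbf{r})$. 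For the critical case $\bar p=1/2$ I would set $\lambda=1$ and substitute (\ref{eq:rform}) for $\mathbf{r}$; the identity $4\mu D_pK\mathbf{1}=4\bar p=2$ then cancels the additive constant and produces the final form $\nu=4\mu D_pK(I-K+2(\mathbf{1}-\mathbf{p})\mu D_{1-p}K)^{-1}\mathbf{p}$. The main technical obstacle is the uniform-in-$k$ bound on $|\Cov_\eta-\Cov_\pi|$, since without the joint decay in both $m$ and $k$ the double sum would contribute $O(n)$ instead of $O(1)$; the two matrix identities $\pi A=(1+\lambda)\mu D_pK\Pi$ and $(I-D_pK)\mathbf{h}=\mathbf{p}+2D_pK\mathbf{g}$ are the only non-trivial algebraic inputs to the subsequent derivation.
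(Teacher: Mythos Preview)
Your proof is correct and follows essentially the same strategy as the paper's: reduce to the stationary case via exponential convergence of $\{I_m\}$, then compute $\nu$ explicitly using $\pi=(1+\l)\mu(I-D_pK)$ and the Poisson relation $\Pi\mathbf{r}=\mathbf{r}-\mathbf{g}+\l\one$. The only differences are in packaging: the paper works with the vectors $\mathbf{v}_k(i)=E_i[G_1G_{1+k}]$ rather than your matrix $A$ (they are linked by $\mathbf{v}_k=A\Pi^{k-1}\mathbf{g}$ for $k\ge 1$), and it obtains the second moment via the tail-sum identity $\mathbf{v}_0=\sum_{k\ge 1}(2k-1)(D_pK)^k\one$ instead of your first-step recursion $(I-D_pK)\mathbf{h}=\mathbf{p}+2D_pK\mathbf{g}$; both yield the same relation $(I-D_pK)\mathbf{h}=(I+D_pK)\mathbf{g}$, and the subsequent algebra leading to $(1+\l)\mu D_pK\Pi\mathbf{r}$ is identical.
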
 



\begin{proof}
First note that for any measure $\eta$ on ${\cal R}$,  
\begin{align*}
 \Var_\eta( U_1 | \, U_0 = n) 
 = \Var_\eta\left( \sum_{k=1}^n G_k \right) 
 &= \sum_{k=1}^n \Var_\eta\left( G_k \right) + 2 \sum_{1\leq k < \ell \leq n} \Cov_\eta( G_k, G_\ell). 
\end{align*}
Let $\nu_n = \Var_{\pi}( U_1 | \, U_0 = n) = \Var_{\pi}\left( \sum_{k=1}^n G_k \right)$. The proof will consist of three steps.
\begin{itemize}
\item [\textit{Step 1.}] Show that $\left|\nu_n-\Var_\eta( U_1 | \, U_0 = n)\right|$ is bounded by a constant uniformly in $n$ and $\eta$.
\item [\textit{Step 2.}] Prove that there is a constant $C>0$ such that
  $\left|\nu_n/n-\nu\right|\le C/n$.
\item [\textit{Step 3.}] Calculate
  $\nu$ explicitly and show that (\ref{nudef}) and (\ref{nucrit}) hold.
\end{itemize}

\medskip

\textit{Step 1.} For any $k\geq 0$ and $i\in{\cal R}$ let
$v_k(i) = E_i[ G_1 G_{1+k} ]$ and
\[\mathbf{v}_k = (v_k(1),v_k(2),\ldots,v_k(N))^t.\]
With this notation, we have that
\begin{equation}\label{VarmuG}
 \Var_\eta(G_k) = \eta \Pi^{k-1} \mathbf{v}_0 - (\eta \Pi^{k-1} \mathbf{g} )^2, \quad \forall k\geq 1, 
\end{equation}
and 
\begin{equation}\label{CovmuG}
 \Cov_\eta(G_k,G_\ell) = \eta\Pi^{k-1}\mathbf{v}_{\ell-k} - (\eta \Pi^{k-1} \mathbf{g})(\eta \Pi^{\ell-1} \mathbf{g}), \quad \forall 1\leq k < \ell. 
\end{equation}
Note that in the special case where $\eta$ has distribution $\pi$ these become 
\begin{equation}\label{VarCovpi}
  \Var_{\pi}(G_k) = \pi (\mathbf{v}_0 - \lambda \mathbf{g}), \quad \text{and} \quad \Cov_{\pi}(G_k,G_\ell) = \pi (\mathbf{v}_{\ell-k} - \lambda \mathbf{g}), \quad \text{for } 1 \leq k < \ell, 
\end{equation}
since $\pi \Pi = \pi$ and $\pi\cdot \mathbf{g}=\lambda$. The following lemma is elementary.
\begin{lem}
$\sup_{k\geq 0} \|\mathbf{v}_k \|_\infty < \infty$. 
\end{lem}
\begin{proof}
 First of all, note that the Cauchy-Schwartz inequality implies that
\[
 E_i[G_1 G_{1+k}] \leq \sqrt{ E_i[G_1^2] E_i[G_{1+k}^2] }\leq \max_i E_i[G_1^2] = \|\mathbf{v}_0\|_\infty, 
\]
and thus it is enough to prove that $E_i[G_1^2] < \infty$ for every
$i \in \mathcal{R}$. The last inequality is obvious by comparison with a geometric random variable with parameter $p_{\text{max}}=\max_{i\in{\cal R}}p(i)<1$.
\end{proof}

\begin{lem}\label{lem:VarCovdiff}
 There exist constants, $\Cl[c]{vl}, \Cl[c]{vu} > 0$ so that for any distribution $\eta$ on ${\cal R}$ and any $1 \leq k < \ell$
\begin{equation}\label{VarCovdiff}
 \left| \Var_\eta(G_k) - \Var_{\pi}(G_k) \right| \leq \Cr{vl} e^{-\Cr{vu} k}
\quad\text{and}\quad
 \left| \Cov_\eta(G_k,G_\ell) - \Cov_{\pi}(G_k,G_\ell) \right| \leq \Cr{vl} e^{-\Cr{vu} \ell}.
\end{equation}
 \end{lem}
\begin{proof}
  The key observation is the exponential convergence to the stationary
  distribution $\pi$ for the Markov chain $\{I_m\}_{m\ge 0}$ as noted
  in \eqref{pierror} above.  From this, it follows easily that
\[
 | \eta \Pi^{k-1} \mathbf{v}_0 - \pi \cdot \mathbf{v}_0 | \leq \Cr{sl} \|\mathbf{v}_0\|_1 e^{-\Cr{su} (k-1)}
\quad\text{and}\quad
 | \eta \Pi^{k-1} \mathbf{g} - \pi \cdot \mathbf{g}| \leq \Cr{sl} \|\mathbf{g}\|_1 e^{-\Cr{su} (k-1)}.
\]
The first inequality in \eqref{VarCovdiff} then follows easily from
the above bounds and the representations for the variances in
\eqref{VarmuG} and \eqref{VarCovpi} taking into account the fact that
$\eta\Pi^{k-1}$ is always a probability distribution so that
$| \eta \Pi^{k-1} \mathbf{g} + \pi \cdot \mathbf{g}|\le
2\|\mathbf{g}\|_\infty$.

To obtain the bound on the difference of the covariance terms in \eqref{VarCovdiff}, note that the representations in \eqref{CovmuG} and \eqref{VarCovpi} imply that 
\begin{align*}
  \left| \Cov_\eta (G_k,G_\ell) - \Cov_{\pi}(G_k,G_\ell) \right| 
  &\leq \left| \eta\Pi^{k-1}\mathbf{v}_{\ell-k} - (\eta \Pi^{k-1} \mathbf{g})(\eta \Pi^{\ell-1} \mathbf{g}) - \pi (\mathbf{v}_{\ell-k} -\lambda\mathbf{g}) \right| \\
  &\leq \left| (\eta \Pi^{k-1}-\pi)( \mathbf{v}_{\ell-k} - \lambda\mathbf{g} )  \right| 
    + \left| \eta\Pi^{k-1} \mathbf{g}\right| \left| \lambda - \eta \Pi^{\ell-1} \mathbf{g}  \right| \\
  &\leq \Cr{sl} e^{-\Cr{su} (k-1)} \|\mathbf{v}_{\ell-k} - \lambda \mathbf{g} \|_1 + \|\mathbf{g} \|_\infty \|\mathbf{g}\|_1 \Cr{sl} e^{-\Cr{su} (\ell-1)},
\end{align*}
where the last inequality again follows from \eqref{pierror} and the fact that $\pi \cdot \mathbf{g} = \lambda$. 
Therefore, it will be enough to show that there exist constants $C, C'>0$ such that
\begin{equation}\label{vkgdiff}
 \|\mathbf{v}_k - \lambda\mathbf{g} \|_1 \leq  C \|\mathbf{g}\|_1^2e^{-C' k}, \quad \forall k\geq 1. 
\end{equation}
To this end, note that by conditioning on $(G_1,I_1)$ we get that 
\[
E_i[G_1 G_{k+1}] - \lambda E_i[G_1] = \sum_{i'\in\mathcal{R}} \sum_{n=0}^\infty n P_i(G_1 = n, \, I_1 = i') \left( E_{i'}[G_k] - \lambda\right).
\]
By (\ref{pierror})
$\max_{i \in \mathcal{R}} |E_{i}[G_k] - \lambda| = \max_{i \in \mathcal{R}} | e_{i} \Pi^{k-1} \mathbf{g} - \pi \cdot \mathbf{g}| \leq \Cr{sl}
\|\mathbf{g}\|_1 e^{-\Cr{su} (k-1)}$, and thus it follows that
\[
 \left| E_i[G_1 G_{k+1}] - \lambda E_i[G_1] \right| \leq E_i[G_1] \left( \Cr{sl} \|\mathbf{g}\|_1 e^{-\Cr{su} (k-1)} \right).
\]
Since this gives a bound on each of the entries of $\mathbf{v}_k - \lambda \mathbf{g}$, the inequality in \eqref{vkgdiff} follows. 
\end{proof}
To complete step 1 in the proof of Proposition \ref{varerror} we notice that Lemma \ref{lem:VarCovdiff} implies 
\[
 \left|\nu_n-\Var_\eta( U_1 | \, U_0 = n)\right|=\left|\Var_{\pi} \left( \sum_{k=1}^n G_k\right) - \Var_\eta\left( \sum_{k=1}^n G_k \right)   \right| 
\leq \sum_{k=1}^n \Cr{vl} e^{-\Cr{vu} k} + 2 \sum_{1\leq k < \ell \leq n} \Cr{vl} e^{-\Cr{vu} \ell}.
\]
Note that the sums on the right are uniformly bounded in $n$ and that the constants $\Cr{vl}, \Cr{vu}$ do not depend on $\eta$. Thus, we conclude that 
\begin{equation}\label{mupivar}
 \sup_{n, \eta} \left| \Var_\eta\left( \sum_{k=1}^n G_k \right) - \Var_{\pi}\left( \sum_{k=1}^n G_k \right)\right| < \infty. 
\end{equation}

\textit{Step 2.} Note that 
\begin{equation*}
 \frac{\nu_n}{n} 
 = \frac{1}{n} \sum_{k=1}^n \Var_{\pi}(G_k) + \frac{2}{n} \sum_{1\leq k<\ell \leq n} \Cov_{\pi}(G_k,G_\ell) 
 = \Var_{\pi}(G_1) + 2 \sum_{k=1}^{n-1} \left(1-\frac{k}{n}\right)\Cov_{\pi}(G_1,G_{1+k}),
\end{equation*}
where in the last equality the change in the indices is due to the fact that $\pi$ is the stationary distribution for the Markov chain $\{I_m\}_{m\ge 0}$. 
Therefore, 
\begin{align*}
 \Var_{\pi}(G_1) +2 \sum_{k=1}^{\infty} \Cov_{\pi}(G_1,G_{1+k})- \frac{\nu_n}{n} 
 &= 2 \sum_{k=n}^{\infty} \Cov_{\pi}(G_1,G_{1+k}) + 2 \sum_{k=1}^{n-1} \frac{k}{n} \Cov_{\pi}(G_1,G_{1+k}). 
\end{align*}
It follows from \eqref{VarCovpi} and \eqref{vkgdiff} that 
\[
 \left| \Cov_{\pi}(G_1,G_{1+k}) \right| 
= \left| \pi \cdot( \mathbf{v}_k - \lambda \mathbf{g} ) \right| 
\leq C e^{-C' k}. 
\]
Therefore, for some $C''>0$
\[
 \left|\Var_{\pi}(G_1) +2 \sum_{k=1}^{\infty} \Cov_{\pi}(G_1,G_{1+k})- \frac{\nu_n}{n} \right| \leq 2 \sum_{k=n}^{\infty} C e^{-C' k} + \frac{2}{n} \sum_{k=1}^{n-1} C k e^{-C' k} \leq \frac{C''}{n}.
\]

\textit{Step 3.} 
Since $E_i[G_1^2] = \sum_{k=1}^\infty (2k-1) P_i(G_1 \geq k)$, recalling the  formula in \eqref{Gtail} for $P_i(G_1 \geq k)$ we obtain 
\begin{align}
 \mathbf{v}_0 &= \sum_{k=1}^\infty (2k-1)(D_p K)^k \one =  (I + D_p K) (I-D_p K)^{-2} D_p K \one \nonumber\\
&= (I-D_p K)^{-1} (I + D_p K)(I-D_p K)^{-1} \mathbf{p} = (I-D_p K)^{-1} (I + D_p K) \mathbf{g}. \label{v0form}
\end{align}
Using \eqref{v0form} and \eqref{VarCovpi}, we have that 
\begin{align*}
 \Var_{\pi}(G_1)
= \pi\left(  (I-D_p K)^{-1} (I+D_p K) - \l I \right) \mathbf{g}
= \pi &(I-D_p K)^{-1} ((1-\l)I + (1+\l)D_p K) \mathbf{g} \\
= (1-\l) \pi &(I-D_p K)^{-1} \mathbf{g} + (1+\l) \pi D_p K (I-D_p K)^{-1} \mathbf{g}.  
\end{align*}
Since $\pi = (1+\l)\mu (I-D_p K)$ this simplifies to 
\begin{equation}\label{VarpiG}
 \Var_{\pi}(G_1)
=(1-\l^2) \mu \mathbf{g} + (1+\l)^2 \mu D_p K \mathbf{g}. 
\end{equation}

To compute $\Cov_{\pi}(G_1,G_{1+k})$ we  need a formula for $\mathbf{v}_k$. 
Recall that $M_n(i,i') = P_i(G_1=n, I_1=i')$, so that by conditioning on $G_1$ and $I_1$ we obtain 
\begin{align*}
 E_i[G_1 G_{1+k}] &= \sum_{n=0}^\infty n M_n(i,i')E_{i'}[G_k] = \sum_{n=0}^\infty n e_i \left( M_n \Pi^{k-1} \mathbf{g} \right).
\end{align*}
Using this and the fact that $M_n = (D_p K)^n D_{1-p}K$ we get
\begin{align*}
 \mathbf{v}_k = \sum_{n=0}^\infty n \left( M_n \Pi^{k-1} \mathbf{g} \right) 
&= \left(\sum_{n=0}^\infty n (D_p K)^n \right) D_{1-p}K \Pi^{k-1} \mathbf{g}  \\
&= D_p K (I-D_p K)^{-2} D_{1-p} K \Pi^{k-1} \mathbf{g}.
\end{align*}
Now, note that if in this equation the matrix $\Pi^{k-1}$ is replaced by $ \one \pi$ (recall this is the matrix with all rows equal to $\pi$) then 
since $\pi \mathbf{g} = \l$ we have
\begin{align*}
  D_p K (I-D_p K)^{-2} D_{1-p} K \one \pi \mathbf{g}
&= \l D_p K (I-D_p K)^{-2}  D_{1-p}K \one 
= \l D_p K (I-D_p K)^{-2} (\one - \mathbf{p}) \\
&= \l (I-D_p K)^{-1} D_p K (I-D_p K)^{-1} (\one - \mathbf{p}) 
= \l (I-D_p K)^{-1} D_p K \one \\
&= \l (I-D_p K)^{-1} \mathbf{p} 
= \l \mathbf{g}. 
\end{align*}
Therefore, we can re-write the formula for $\Cov_{\pi}(G_1,G_{1+k})$ in \eqref{VarCovpi} as 
\[
 \Cov_{\pi}(G_1,G_{1+k}) = \pi \cdot( \mathbf{v}_k - \l \mathbf{g} )
= \pi D_p K(I-D_p K)^{-2}  D_{1-p} K\left( \Pi^{k-1}  - \one \pi \right) \mathbf{g}. 
\]
Recalling the definition of $\mathbf{r}$ in \eqref{rnlim}, this implies that 
\begin{align}
 \sum_{k=1}^\infty \Cov_{\pi}(G_1,G_{1+k}) &= \pi D_p K(I-D_p K)^{-2}  D_{1-p} K \mathbf{r} \nonumber \\
&= \pi (I-D_p K)^{-1}D_p K(I-D_p K)^{-1}  D_{1-p} K \mathbf{r} 
= (1+\l) \mu D_p K \Pi \mathbf{r}, \label{Covsum}
\end{align}
where the last equality follows from Lemma \ref{lem:Pipi}. 

Combining \eqref{VarpiG} and \eqref{Covsum}, we obtain that 
\begin{align*}
 \nu = (1-\l^2) \mu \mathbf{g} + (1+\l)^2 \mu D_p K \mathbf{g} + 2 (1+\l) \mu  D_p K \Pi \mathbf{r}.
\end{align*}
To further simplify this, note that it follows from \eqref{eq:rform0} and Corollary \ref{pir} that
\[
 \mathbf{g} = (I-\Pi+\one \pi)(\mathbf{r} + \l \one) = (I-\Pi) \mathbf{r} + \l \one.  
\]
Re-arranging this we get $\Pi \mathbf{r} = \mathbf{r} + \l \one - \mathbf{g}$, and putting this back into the above formula for $\nu$ we get 
\begin{align*}
 \nu &= (1-\l^2) \mu \mathbf{g} + (1+\l)^2 \mu D_p K \mathbf{g} + 2 (1+\l) \mu  D_p K (  \mathbf{r} + \l \one - \mathbf{g} ) \\
&= (1-\l^2) \mu \mathbf{g} + (\l^2-1) \mu D_p K \mathbf{g} + 2 (1+\l) \mu  D_p K \mathbf{r} + 2 (1+\l)\l \mu  D_p K \one \\
&= (1-\l^2) \mu (I-D_p K) \mathbf{g} + 2 (1+\l) \mu  D_p K \mathbf{r} + 2 (1+\l)\l \bar{p}  \\
&= (1-\l^2) \mu\cdot \mathbf{p} + 2 (1+\l) \mu  D_p K \mathbf{r} + 2 (1+\l)\l \bar{p} 
= (1+\l)\l  + 2 (1+\l) \mu  D_p K \mathbf{r} 
\end{align*}
where in the second to last equality we used the formula \eqref{eq:gform} for $\mathbf{g}$, and in the last equality we used that $\mu \cdot \mathbf{p} = \bar{p} = \frac{\l}{1+\l}$. 
\end{proof}

\section{The backward BLP and parameter relationships}
\label{sec:param}

Throughout this section we will assume that we are in the critical
case $\bar{p} = 1/2$. We shall discuss the backward BLP, give explicit
formulas for parameters $\delta$ and $\tilde{\delta}$, and derive the
key relationship between them.

Consider first the backward BLP $V$.  To obtain the results about $V$ from those for the forward BLP $U$
\begin{itemize}
\item we need to replace $\bp$ with
$\one-\bp$ everywhere in order to switch from counting
``successes'' to counting ``failures'';
\item we have to account for the fact that $V$ has one ``immigrant''
  in each generation: recall that $V_i=F^i_{V_{i-1}+1}$ while
  $U_i=S^i_{U_{i-1}}$, $i\in\N$.
\end{itemize}

The above observations lead to the following
statements whose proofs are identical to those for $U$. We shall
state the results only for the critical case, since we use $V$ solely in
the critical setting.
\begin{prop}\label{pr:Vmean}
Let $\bar{p}=1/2$. For every distribution $\eta$ on ${\cal R}$
 \[
 \lim_{n\ra\infty} \frac{E_\eta[V_1| \, V_0 = n]}{n} = 1.
 \]
 Moreover, there exist constants
 $\Cl[c]{nl}, \Cl[c]{nu} > 0$ such that for any $n\in\N$ and any distribution $\eta$
 on $\mathcal{R}$
\begin{equation*}
 \left| E_{\eta}[V_1| \, V_0 = n] - n - (1+\eta \cdot \tilde{\mathbf{r}}) \right| \leq \Cr{nl} e^{-\Cr{nu} n},
\end{equation*}
where 
\begin{equation*}
  \tilde{\mathbf{r}}=\mathbf{r}(\mathbf{1-p},K)=\left(I-K+2\mathbf{p}\mu D_pK\right)^{-1}(\mathbf{1-p})-\mathbf{1}.
\end{equation*}
\end{prop}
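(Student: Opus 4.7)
The plan is to deduce Proposition \ref{pr:Vmean} from Proposition \ref{pr:Umean} applied to the \emph{swapped} environment in which $\bp$ is replaced by $\one - \bp$. Under this swap the roles of ``successes'' and ``failures'' in the Bernoulli sequences $\{\xi(j)\}_{j\ge 1}$ are interchanged, so the failure-gap variables $\tilde G_m := F^1_m - F^1_{m-1}$ (the number of failures between the $(m-1)$-st and $m$-th success) have the same joint law under $\P_\eta$ as the success-gap variables $G_m$ of the swapped environment. Since $\bar p = 1/2$, the swap preserves criticality: the new average cookie strength is $\mu \cdot (\one - \bp) = 1-\bar p = 1/2$, and the constant $\lambda$ appearing in Proposition \ref{pr:Umean} is replaced by $\lambda' := (1-\bar p)/\bar p = 1$, which accounts for the leading coefficient $1$ in the current proposition.

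The next step is to express $V_1$ in terms of these failure gaps. Unwinding (\ref{bbpdef}), when $V_0 = n$ we have $V_1 = F^1_{n+1} = \sum_{m=1}^{n+1} \tilde G_m$, so
\[
E_\eta[V_1 \mid V_0 = n] = \sum_{m=1}^{n+1} E_\eta[\tilde G_m].
\]
The upper limit $n+1$ (rather than $n$) is precisely the ``one immigrant per generation'' contribution flagged in the bullets above. Applying (\ref{meanlimit}) to the swapped environment with $n+1$ in place of $n$, and noting that $\tilde{\mathbf{r}} = \mathbf{r}(\one-\bp, K)$ by definition, yields
\[
\left| \sum_{m=1}^{n+1} E_\eta[\tilde G_m] - (n+1) - \eta \cdot \tilde{\mathbf{r}} \right| \le C\, e^{-c(n+1)}
\]
for constants $C, c > 0$ depending only on $\bp$ and $K$. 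Rearranging gives the stated bound $|E_\eta[V_1 \mid V_0 = n] - n - (1 + \eta \cdot \tilde{\mathbf{r}})| \le C' e^{-c n}$.

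Finally, the explicit formula for $\tilde{\mathbf{r}}$ is obtained by substituting $\bp \mapsto \one - \bp$ directly into (\ref{eq:rform}): this replaces $\bar p$ by $1-\bar p$, interchanges $D_p$ with $D_{1-p}$, and turns $\lambda$ into $\lambda'$, producing
\[
\tilde{\mathbf{r}} = \left(I - K + \tfrac{\bp\,\mu D_p K}{\bar p}\right)^{-1}(\one - \bp) - \lambda'\,\one,
\]
which at $\bar p = 1/2$ and $\lambda' = 1$ simplifies to $(I - K + 2\,\bp\,\mu D_p K)^{-1}(\one - \bp) - \one$, matching the statement. I do not anticipate any real obstacle: the argument is a direct translation of the forward-BLP computation through the success/failure symmetry, and the only point requiring care is the index shift by one coming from the immigrant, which accounts precisely for the constant ``$1$'' inside $1 + \eta \cdot \tilde{\mathbf{r}}$.
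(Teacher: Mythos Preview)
Your proposal is correct and is exactly the approach the paper takes: the paper does not write out a separate proof but simply remarks that the result follows from the forward-BLP analysis by replacing $\bp$ with $\one-\bp$ and accounting for the index shift $V_i = F^i_{V_{i-1}+1}$ coming from the immigrant. You have filled in precisely those details, including the derivation of $\tilde{\mathbf{r}}$ from \eqref{eq:rform} under the swap.
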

\begin{prop}\label{Vvarerror}
  Let $\bar{p}=1/2$. There exists a constant $\Cl[c]{w}>0$ such that for any
  distribution $\eta$ on ${\cal R}$
\[
 \left| \frac{\Var_\eta( V_1 | \, V_0 = n)}{n} - \tilde{\nu} \right| \leq \frac{\Cr{w}}{n}, 
\]
where the constant $\tilde{\nu}$ is given by the formula 
\begin{equation}
  \label{tnucrit}
  \tilde{\nu} = \nu(\mathbf{1-p},K)=2+4\mu D_{1-p}K \tilde{\mathbf{r}}=4\mu D_{1-p}K(I-K+2\mathbf{p}\mu D_pK)^{-1}(\mathbf{1-p}).
\end{equation}
\end{prop}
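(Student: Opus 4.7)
The plan is to exploit the precise correspondence between the backward BLP $V$ and the forward BLP $U$ after swapping the roles of successes and failures in the underlying Bernoulli sequences. Indeed, from the definitions \eqref{fbpdef} and \eqref{bbpdef}, $V_1$ given $V_0=n$ equals $F^1_{n+1}$, the number of failures before the $(n+1)$-st success in $\{\xi_1(j)\}_{j\ge 1}$. If one replaces $\mathbf{p}$ by $\mathbf{1}-\mathbf{p}$ throughout (keeping the same transition matrix $K$), then failures become successes and vice versa, so the conditional law of $V_1$ given $V_0=n$ under $(\mathbf{p},K)$ coincides with the conditional law of $U_1$ given $U_0=n+1$ under $(\mathbf{1}-\mathbf{p},K)$. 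Moreover, since $\bar{p}=1/2$, the swapped environment also satisfies $\mu\cdot(\mathbf{1}-\mathbf{p})=1/2$, so we remain in the critical regime.

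With this correspondence in hand, I would apply Proposition~\ref{varerror} with $\mathbf{p}$ replaced by $\mathbf{1}-\mathbf{p}$ to obtain a constant $\Cr{v}'$ (the analogue of $\Cr{v}$ for the swapped environment) such that
\[
\bigl|\Var_\eta(V_1\mid V_0=n) - (n+1)\,\tilde{\nu}\bigr| \leq \Cr{v}',
\]
where $\tilde{\nu}:=\nu(\mathbf{1}-\mathbf{p},K)$. Dividing by $n$ and absorbing the extra $\tilde{\nu}/n$ into the error gives
\[
\left|\frac{\Var_\eta(V_1\mid V_0=n)}{n} - \tilde{\nu}\right| \leq \frac{\Cr{v}'+\tilde{\nu}}{n},
\]
which is the desired bound with $\Cr{w}:=\Cr{v}'+\tilde{\nu}$.

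It remains to verify the explicit formula \eqref{tnucrit}. Starting from the critical-case expression \eqref{nucrit} for $\nu$ and substituting $\mathbf{p}\leftrightarrow\mathbf{1}-\mathbf{p}$ (so that $D_p\leftrightarrow D_{1-p}$), one reads off
\[
\tilde{\nu}=2+4\mu D_{1-p}K\,\tilde{\mathbf{r}}=4\mu D_{1-p}K\bigl(I-K+2\mathbf{p}\,\mu D_pK\bigr)^{-1}(\mathbf{1}-\mathbf{p}),
\]
where $\tilde{\mathbf{r}}=\mathbf{r}(\mathbf{1}-\mathbf{p},K)$ is given by Proposition~\ref{pr:Vmean}. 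This matches \eqref{tnucrit}.

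The only mild obstacle is the bookkeeping for the extra immigrant: one must track that $V_1$ corresponds to $S^1_{n+1}$ (with swapped roles) rather than $S^1_n$, and check that the resulting shift $n\mapsto n+1$ contributes only an $O(1/n)$ error after dividing by $n$. Everything else is a direct transcription of the arguments of Section~\ref{sec:meanvar}, since those arguments used only the i.i.d.\ Markovian structure of the cookie stacks and the ellipticity of $p$ (both of which are preserved under $\mathbf{p}\leftrightarrow\mathbf{1}-\mathbf{p}$), together with the identity $\pi\cdot\mathbf{g}=\lambda$ which has its natural analogue in the swapped environment.
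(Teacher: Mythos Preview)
Your proposal is correct and follows exactly the approach indicated in the paper: the paper does not give a separate proof of this proposition but simply remarks that it follows from Proposition~\ref{varerror} upon replacing $\mathbf{p}$ by $\mathbf{1}-\mathbf{p}$ and accounting for the extra immigrant (the shift $n\mapsto n+1$). Your bookkeeping for the immigrant and the derivation of the explicit formula by substitution into \eqref{nucrit} are precisely what the paper has in mind.
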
 
 
For reader's convenience we list the relevant parameters for $U$ and $V$ side by side.
\begin{alignat}{2}
  \pi&=2\mu(1-D_p K)=2\mu D_{1-p}K; \quad &&\tilde{\pi}=2\mu(1-D_{1-p}K)=2\mu D_pK; \nonumber \\
  \mathbf{r}&=(I-K+2\mathbf{(1-p)} \mu D_{1-p}K)^{-1}\mathbf{p}-\one;
  \quad\quad
  &&\tilde{\mathbf{r}}=(I-K+2\mathbf{p} \mu D_pK)^{-1}\mathbf{(1-p)}-\one; \label{parforms}\\
  \nu&=2+4\mu D_pK \mathbf{r}=2+2\tilde{\pi}\br; \quad
  &&\tilde{\nu}=2+4\mu D_{1-p}K \tilde{\mathbf{r}}=2+2\pi\tilde{\br}. \nonumber
\end{alignat}
In the above formulas, recall that 
\begin{itemize}
 \item $K$ is the transition matrix for the Markov chain $R_j$ used to generate the cookie environment at each site. The row vector $\mu$ is the stationary distribution for this Markov chain. 
 \item $\mathbf{p}$ is the column vector with $i$-th entry $p(i)$, $\one$ is a  column vector of all ones, and $D_p$ and $D_{1-p}$ are diagonal matrices with $i$-th entry $p(i)$ and $1-p(i)$, respectively.
 \item The row vectors $\pi$ and $\tilde\pi$ give the limiting distribution of the next ``cookie'' to be used after the $n$-th failure or success, respectively, in the sequence of Bernoulli trials $\xi(j)$ at a site. 
 \item The column vector $\mathbf{r}$ and the parameter $\nu$ are related to the asymptotics of the mean and variance of the forward BLP $U$ as given in Propositions \ref{pr:Umean} and \ref{varerror}, and $\tilde{\mathbf{r}}$ and $\tilde\nu$ are related to the asymptotics of the backward BLP $V$ in the same way by Propositions \ref{pr:Vmean} and \ref{Vvarerror}. 
\end{itemize}

\noindent
Now we can define the parameters $\d$ and $\tilde\d$ that appear in Theorems \ref{th:rectran}--\ref{th:limdist}.
\begin{equation}
  \label{eq:del}
  \delta=\frac{2\eta\cdot \br}{\nu},\quad 
\tilde{\delta}=\frac{2\eta\cdot\tilde{\br}}{\tilde{\nu}}.
\end{equation}
Note that the parameter $\d$ can be computed in terms of $\mathbf{p}$, $K$ and $\eta$. If we wish to make this dependence explicit we will write $\d = \d(\mathbf{p},K,\eta)$. In particular, with this notation we have that 
$\tilde{\d} = \d(\one - \mathbf{p},K,\eta)$.

We close this section with a justification of the statement of Theorem \ref{th:rectran} that $\d + \tilde\d<1$. 
\begin{prop}
  \label{deleq}
$\nu=\tilde\nu$ and $\delta+\tilde{\delta}=1-2/\nu$.
\end{prop}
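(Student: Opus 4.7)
The plan is to exploit two orthogonality relations, $\pi\cdot\br=0$ (which is Corollary \ref{pir}) together with its analogue $\tilde\pi\cdot\tilde\br=0$ for the backward BLP, combined with the assumption of a unique closed irreducible subset $\mathcal{R}_0\subseteq\mathcal{R}$ which forces the right kernel of $I-K$ to be one-dimensional. These facts will pin down $\br+\tilde\br$ up to a single scalar, from which both $\nu=\tilde\nu$ and the formula for $\delta+\tilde\delta$ drop out.

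The first step is to rewrite the defining formulas from \eqref{parforms}. In the critical case $\bar p=1/2$ we have $\pi=2\mu D_{1-p}K$, so setting $A=I-K+(\one-\bp)\pi$ gives $\br=A^{-1}\bp-\one$, hence $A\br=\bp-A\one$. Using $K\one=\one$ and $\pi\one=1$ yields $A\one=\one-\bp$, and therefore $A\br=2\bp-\one$. Expanding $A\br=(I-K)\br+(\one-\bp)(\pi\br)$ and killing the second term via Corollary \ref{pir}, I obtain $(I-K)\br=2\bp-\one$. The parallel argument for $\tilde\br$, using $\tilde\pi=2\mu D_pK$ and $\tilde\pi\cdot\tilde\br=0$, gives $(I-K)\tilde\br=\one-2\bp$.

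Adding the two identities produces $(I-K)(\br+\tilde\br)=0$, so $\br+\tilde\br$ is a right harmonic function of $K$. Because $K$ has a unique closed irreducible subset $\mathcal{R}_0$, the right kernel of $I-K$ is spanned by $\one$: any such function is constant on $\mathcal{R}_0$ by the maximum principle for irreducible chains, and equals the same constant on every transient state since the chain hits $\mathcal{R}_0$ almost surely. Thus $\br+\tilde\br=c\one$ for some $c\in\R$. Multiplying on the left by $\pi$ and using $\pi\br=0$, $\pi\one=1$ gives $\pi\tilde\br=c$; multiplying by $\tilde\pi$ and using $\tilde\pi\tilde\br=0$ gives $\tilde\pi\br=c$. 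Substituting into $\nu=2+2\tilde\pi\br$ and $\tilde\nu=2+2\pi\tilde\br$ from \eqref{parforms} yields $\nu=\tilde\nu=2+2c$. Finally, since $\eta\cdot\one=1$,
\[
\delta+\tilde\delta=\frac{2\eta\cdot(\br+\tilde\br)}{\nu}=\frac{2c}{\nu}=\frac{\nu-2}{\nu}=1-\frac{2}{\nu}.
\]

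The computation is essentially linear algebra; the only nontrivial structural input is the identification $\ker(I-K)=\R\one$, which follows from the hypothesis that $K$ has a single closed irreducible class. Everything else is manipulation of the explicit formulas already collected in \eqref{parforms}, combined with the two orthogonality relations $\pi\br=0$ and $\tilde\pi\tilde\br=0$.
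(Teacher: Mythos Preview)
Your proof is correct and takes a genuinely different route from the paper's. Both arguments reduce to showing that $\br+\tilde\br$ is a constant vector, after which the conclusions follow identically from \eqref{parforms} and \eqref{eq:del}. The paper establishes this via the probabilistic representation \eqref{rZn}: it shows $r(i)=\lim_n\{\sum_{j=1}^n\E_i[2p(R_j)-1]+e_iK^n\br\}$ and the analogous formula for $\tilde r(i)$, adds them so the drift sums cancel, and deduces $\br+\tilde\br=\lim_nK^n(\br+\tilde\br)=(\one\mu)(\br+\tilde\br)$ via Ces\`aro averaging. You instead work purely algebraically: rewriting the defining inversions from \eqref{parforms} and inserting the orthogonality relations $\pi\br=0$, $\tilde\pi\tilde\br=0$ to obtain the clean identities $(I-K)\br=2\bp-\one$ and $(I-K)\tilde\br=\one-2\bp$, whence $\br+\tilde\br\in\ker(I-K)$, and then invoke the standard fact that a stochastic matrix with a single closed irreducible class has right $1$-eigenspace $\R\one$.

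Your argument is shorter and avoids the limiting representation \eqref{rZn} altogether; it also makes transparent that the result is essentially a statement about harmonic functions of $K$. The paper's approach, on the other hand, does not need to invoke the structure of $\ker(I-K)$ explicitly---it gets the constant vector directly as $(\one\mu)(\br+\tilde\br)$---and connects more naturally to the probabilistic meaning of $\br$ developed earlier in Section~\ref{sec:meanvar}. Both are complete; yours is the more self-contained linear-algebraic derivation.
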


\begin{proof}
We shall need the following lemma.
\begin{lem}\label{lem:rtrsum}
  $\br+\tilde{\br}=(\mu(\br+\tilde{\br}))\one$. In particular, $\br + \tilde{\br}$ is a constant multiple of $\mathbf{1}$. 
\end{lem}
Let us postpone the proof of Lemma~\ref{lem:rtrsum} and continue with
the proof of Proposition~\ref{deleq}.
Recall that by Corollary~\ref{pir} $\pi\cdot\br=0$. Similarly,
$\tilde{\pi}\cdot \tilde{\br}=0$.  Therefore,
Lemma~\ref{lem:rtrsum} and the formulas for $\nu$ and $\tilde\nu$ in \eqref{parforms} imply that 
\[
 \tilde{\nu}
=2+2\pi(\br+\tilde{\br})
\overset{\mathrm{L.\,}\ref{lem:rtrsum}}{=}2+2\mu(\br+\tilde{\br})
\overset{\mathrm{L.\,}\ref{lem:rtrsum}}{=}2+2\tilde{\pi}(\br+\tilde{\br})
=\nu. 
\]
Moreover, from the above line we see that
$\mu(\br+\tilde{\br})=\nu/2-1$. Combining this with
Lemma~\ref{lem:rtrsum} we get that
$\br+\tilde{\br}=(\nu/2-1)\one$. Then by \eqref{eq:del}  we have $\delta+\tilde{\delta}=2\eta\cdot(\mathbf{r+\tilde{r}})/\nu=1-2/\nu$ as claimed.
\end{proof}

\begin{proof}[Proof of Lemma~\ref{lem:rtrsum}]
Recall that $\mathbf{r}_n$ is the column vector with $i$-th entry 
\[
 r_n(i) = E_{i}^{U,n}[ U_1 ] - n
= E_i[ S_n] - n,
\]
where $S_n$ is the number of successes before the $n$-th failure in the sequence of Bernoulli trials $\{\xi(j)\}_{j\geq 1}$. 
Let $Z_n = \sum_{j=1}^n \xi(j)$ be the number of successes in the first $n$ Bernoulli trials. 
If $Z_n = k$, then there are $n-k$ failures among the first $n$ Bernoulli trials and so $S_n$ is equal to $k$ plus the number of successes before the $k$-th failure in shifted Bernoulli sequence $\{\xi(j)\}_{j\geq n+1}$. 
Therefore, by conditioning on $Z_n$ and $R_{n+1}$ (the type of the $(n+1)$-st cookie), we obtain that 
\begin{align}
 r_n(i) 
&= \sum_{i'\in \mathcal{R}} \sum_{k=0}^n P_i( Z_n = k, \, R_{n+1} = i' )\left( k + E_{i'}\left[ S_k \right] - n \right) \nonumber \\
&= \sum_{i'\in \mathcal{R}} \sum_{k=0}^n P_i( Z_n = k, \, R_{n+1} = i' )\left( 2k + r_k(i') - n \right) \nonumber \\
&= E_i\left[ 2Z_n - n \right] + \sum_{i'\in \mathcal{R}} \sum_{k=0}^n P_i( Z_n = k, \, R_{n+1} = i' )r_k(i') \nonumber \\
& =  \sum_{j=1}^n  \E_i\left[2p(R_j) - 1 \right]  +  e_i K^n \mathbf{r} + \sum_{i'\in \mathcal{R}} \sum_{k=0}^n P_i( Z_n = k, \, R_{n+1} = i' )( r_k(i') - r(i')). \nonumber 
\end{align}
  Since $\|\mathbf{r}_k - \mathbf{r} \|_\infty$ decreases exponentially in $k$ and $\lim_{n\ra\infty} P_i( Z_n = k, \, R_{n+1}=i' ) = 0$ for every $k\geq 0$ and $i' \in \mathcal{R}$, it follows from the dominated convergence theorem that the last sum vanishes as $n\ra\infty$. 
  Since $\mathbf{r}_n \ra \mathbf{r}$ as $n\ra\infty$, we have shown that 
\begin{equation}\label{rZn}
 r(i)
 = \lim_{n\ra\infty} \left\{ \sum_{j=1}^n  \E_i\left[2p(R_j) - 1 \right]  +  e_i K^n \mathbf{r} \right\}
, \qquad \forall i  \in \mathcal{R}. 
\end{equation}
where the limit on the right necessarily converges. 
Similarly, 
a symmetric argument interchanging the roles of failures and successes yields 
\begin{equation}\label{trZn}
 \tilde{r}(i) 
= \lim_{n\ra\infty} \left\{   \sum_{j=1}^n  \E_i\left[1 - 2p(R_j) \right]  + e_i K^n \mathbf{\tilde{r}} \right\}
, \qquad \forall i \in \mathcal{R}. 
\end{equation}
Since clearly $Z_n + \tilde{Z}_n = n$ for all $n$, it follows from \eqref{rZn} and \eqref{trZn} that 
\[
 \mathbf{r} + \mathbf{\tilde{r}} = \lim_{n\ra\infty} K^n (\mathbf{r} + \mathbf{\tilde{r}} )
= \lim_{n\ra\infty} \frac{1}{n} \sum_{j=1}^n K^j  (\mathbf{r} + \mathbf{\tilde{r}} ) = (\one \mu)  (\mathbf{r} + \mathbf{\tilde{r}} ). 
\]
(Note that the second limit above is needed since we did not assume that the Markov chain $R_j$ is aperiodic on the closed irreducible subset $\mathcal{R}_0$.)
\end{proof}

\section{The non-critical case}\label{sec:noncrit}
In this section we consider the non-critical case $\bar{p} = \mu \cdot \mathbf{p} \neq 1/2$ and give the proof of Theorem \ref{th:noncrit}. 
The proof will be obtained through an analysis of the forward and backward BLP from Section \ref{sec:branching}. Theorems \ref{th:fbp} and \ref{th:bbp}, which were the basis for the proofs of Theorems \ref{th:rectran}--\ref{th:limdist}, cannot be directly applied in the non-critical case $\bar{p}\neq 1/2$. The main idea in this section will be to construct a coupling of the non-critical forward/backward BLP with a corresponding critical forward/backward BLP and then use this coupling and the connection of the BLP with the ERW to obtain the conclusions of Theorem \ref{th:noncrit}.  

\subsection{Coupling} Let $K$ be a transition probability matrix for a
Markov chain on ${\cal R}=\{1,2,\dots,N\}$ with some initial
distribution $\eta$. We will assume that this Markov chain satisfies
the assumptions stated at the beginning of Section 1.1. In particular,
it has a unique stationary distribution $\mu$ supported on a closed
irreducible set ${\cal R}_0\subseteq{\cal R}$. Suppose that we are given two functions
$p_0,p_1:{\cal R}\to(0,1)$ such that $p_1\ge p_0$.  Let
$\ve{p_i}:=(p_i(1),p_i(2),\dots,p_i(N))$, $\ve{i}=0,1$, and assume
that $\mu\cdot\ve{p_1}>\mu\cdot\ve{p_0}=1/2$.

Next, we expand the state space for the Markov chain to be
$\mathcal{\hat{R}} = \{1,2,\ldots,2N\}$, and for any $\e\in (0,1]$ we
let $\hat{K}_\e$ be a $2N\times 2N$ transition matrix and
$\mathbf{\hat{p}}$ be a column vector of length $2N$ given by
\begin{equation}\label{Keform}
 \hat{K}_\e = \left( \begin{array}{c|c} (1-\e)K & \e K \\ \hline O_N & K \end{array} \right)
 \quad\text{and}\quad
 \mathbf{\hat{p}} = \left( \begin{array}{c} \mathbf{p_1} \\ \mathbf{p_0} \end{array} \right),
\end{equation}
where $O_N$ denotes an $N\times N$ matrix of all zeros.  It is clear
that the unique stationary distribution for $\hat{K}_\e$ is given by
$\hat{\mu} = (\mathbf{0},\mu)$, where $\mathbf{0}$ is a row vector of
$N$ zeros and $\mu$ is the stationary distribution for the transition
matrix $K$.  Moreover,
$\hat{\mu} \cdot \mathbf{\hat{p}} = \mu \cdot \mathbf{p_0} = 1/2$ so
that $\hat{K}_\e$ and $\mathbf{\hat{p}}$ correspond to a critical ERW.

Let $\hat{U}^\e$ and $\hat{V}^\e$ be the associated forward and
backward BLP for the critical ERW corresponding to $\mathbf{\hat{p}}$, $\hat{K}_\e$,
 and with initial distribution $\hat{\eta}_\e = ((1-\e)\eta,\e \eta)$. Let
$U$ and $V$ be the forward and backward BLP for the non-critical case
corresponding to $\mathbf{p_1}$, $K$,  and the initial distribution
$\eta$.  We claim that for any $\e\in (0,1]$ these BLPs can be
coupled so that
\begin{itemize}
 \item $U_k \geq \hat{U}_k^\e$ for all $k\geq 0$; 
 \item $V_k \leq \hat{V}_k^\e$ for all $k\geq 0$. 
\end{itemize}
To this end, recall that $\P_\eta$ is the distribution for the i.i.d.\
family of Markov chains $\{R_j^x\}_{j\ge 0}$, $x\in\Z$, 
with transition matrix $K$ and initial condition $\eta$.  We can
enlarge the probability space so that the measure $\P_\eta$ also
includes an i.i.d.\ sequence $\{\gamma_\e^x\}_{x\in\Z}$ of
Geometric($\e$) random variables (that is
$\P_\eta( \gamma_\e^x = k ) = (1-\e)^{k} \e$ for $k \geq 0$) that is
also independent of the family of Markov chains $\{R_j^x\}_{j\geq 1}$,
$x\in\Z$.  We let $\w_x(j) = p_1(R_j^x)$ and also define a different
cookie environment
$\hat{\w}^\e = \{\hat{\w}^\e_x(j)\}_{x\in \Z, \, j\geq 1}$ by
\[
 \hat{\w}_x^\e(j) = \begin{cases} p_1(R_j^x) & j\leq \gamma_\e^x \\ p_0(R_j^x) & j > \gamma_\e^x. \end{cases}
\]
It is clear from this construction that the cookie environments $\w$
and $\hat{\w}$ are coupled so that $\hat{\w}_x^\e(j) \leq \w_x(j)$ for
all $x \in \Z$ and $j\geq 1$.  Given such a pair $(\w,\hat{\w}^\e)$ of
coupled cookie environments, let $\{\xi_x(j)\}_{x\in\Z, \, j\geq 1}$
and $\{\hat\xi^\e_x(j)\}_{x\in \Z,\, j\geq 1}$ be families of
independent Bernoulli random variables with
$\xi_x(j) \sim \text{Bernoulli}(\w_x(j))$ and
$\hat\xi_x^\e(j) \sim \text{Bernoulli}(\hat\w^\e_x(j))$ that are
coupled to have $\hat\xi_x^{\e}(j) \leq \xi_x(j)$ for all $x\in \Z$
and $j\geq 1$.  If $U$ and $V$ are the forward and backward BLP
constructed from the Bernoulli family $\{\xi_x(j)\}_{x,j}$ and
$\hat{U}^\e$ and $\hat{V}^\e$ are the forward and backward BLP
constructed from the Bernoulli family $\{\hat\xi_x^\e(j)\}_{x,j}$,
then the couplings $U_k \geq \hat{U}^\e_k$ and $V_k \leq \hat{V}_k^\e$
for all $k\ge 0$ follow immediately.  Moreover, it is easy to see that
$U$, $V$, $\hat{U}^\e$ and $\hat{V}^\e$ have the required
marginal distributions under this coupling.

As noted above, the forward and backward BLP $\hat{U}^\e$ and
$\hat{V}^\e$ are critical BLP to which Theorems \ref{th:fbp} and
\ref{th:bbp} apply. In the applications of these theorems, however,
one must replace the parameter $\d = \d(\mathbf{p},K,\eta)$ by
\[
 \hat{\d}_\e = \hat{\d}_\e(\mathbf{p_1},\mathbf{p_0},K,\eta) := \d(\mathbf{\hat{p}},\hat{K}_\e, \hat\eta_\e ).
\]
The following lemma shows, in particular, that the parameter
$\hat{\d}_\e$ can be made arbitrarily large by letting $\e \ra 0$.
\begin{lem}\label{depslim}
  Let $\epsilon\in(0,1]$, $\eta$ be an arbitrary distribution on
  ${\cal R}$, and $\hat{U}^\epsilon$ and $\hat{V}^\epsilon$ be the
  BLPs constructed above. Then
  \begin{equation}
    \label{deps}
    \hat\delta_\epsilon(\ve{p_1},\ve{p_0},K,\eta)=\delta(\ve{p_0},K,\eta)+\frac{4\eta\cdot\left( \sum_{j=1}^\infty(1-\epsilon)^jK^{j-1}(\ve{p_1}-\ve{p_0})\right) }{\nu(\ve{p_0},K)}.
  \end{equation}
In particular, $\hat\delta_\epsilon(\ve{p_1},\ve{p_0},K,\eta)$ is a
continuous strictly decreasing function of $\epsilon$ on $(0,1]$ which
equals $\delta(\ve{p_0},K,\eta)$ when $\e=1$ and
satisfies
\begin{equation}
  \label{eps0}
  \lim_{\e\downarrow 0}\ \hat\d_\e(\mathbf{p_1},\mathbf{p_0},K,\eta) = \infty.
\end{equation}
\end{lem}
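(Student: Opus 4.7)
The plan is to exploit the block upper-triangular structure of $\hat K_\epsilon$ to reduce every quantity in (\ref{parforms})--(\ref{eq:del}) for the hatted data to the analogous quantity for $(\mathbf{p_0},K)$ plus an explicit correction. First I would verify that the unique stationary distribution of $\hat K_\epsilon$ is $\hat\mu=(\mathbf{0},\mu)$, from which $\hat\mu D_{\hat p}\hat K_\epsilon=(\mathbf{0},\mu D_{p_0}K)$ and $\hat\mu D_{1-\hat p}\hat K_\epsilon=(\mathbf{0},\mu D_{1-p_0}K)$. The matrix $I-\hat K_\epsilon+2(\mathbf{1}-\hat{\mathbf{p}})\hat\mu D_{1-\hat p}\hat K_\epsilon$ whose inverse defines $\hat{\mathbf{r}}_\epsilon$ is then itself block upper-triangular with diagonal blocks
\begin{equation*}
A_{11}=I-(1-\epsilon)K,\qquad A_{22}=I-K+2(\mathbf{1}-\mathbf{p_0})\mu D_{1-p_0}K,
\end{equation*}
both invertible ($A_{11}$ because $(1-\epsilon)K$ has spectral radius strictly less than one, $A_{22}$ by (\ref{parforms}) applied to $(\mathbf{p_0},K)$). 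Writing $\hat{\mathbf{r}}_\epsilon=(\hat r^{(1)}_\epsilon,\hat r^{(0)}_\epsilon)^t$, the block inversion formula immediately gives $\hat r^{(0)}_\epsilon=A_{22}^{-1}\mathbf{p_0}-\mathbf{1}=\mathbf{r}(\mathbf{p_0},K)=:\mathbf{r}_0$, and consequently $\hat\nu_\epsilon=2+4\mu D_{p_0}K\,\mathbf{r}_0=\nu(\mathbf{p_0},K)$; neither depends on $\epsilon$.

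The heart of the argument is the top block. Starting from the block inverse formula, a short manipulation based on (i) Corollary~\ref{pir} applied to $(\mathbf{p_0},K)$, giving $\mu D_{1-p_0}K\mathbf{r}_0=0$, (ii) the criticality identity $\mu\cdot(\mathbf{1}-\mathbf{p_0})=1/2$, and (iii) the Poisson-type relation $(I-K)\mathbf{r}_0=2\mathbf{p_0}-\mathbf{1}$, which is read off by applying $A_{22}$ to $\mathbf{r}_0+\mathbf{1}$, collapses the off-diagonal correction to
\begin{equation*}
\hat r^{(1)}_\epsilon-\mathbf{r}_0\;=\;2A_{11}^{-1}(\mathbf{p_1}-\mathbf{p_0})\;=\;2\sum_{j=0}^\infty(1-\epsilon)^jK^j(\mathbf{p_1}-\mathbf{p_0}).
\end{equation*}
Substituting this, together with $\hat r^{(0)}_\epsilon=\mathbf{r}_0$, $\hat\eta_\epsilon=((1-\epsilon)\eta,\epsilon\eta)$, and $\hat\nu_\epsilon=\nu(\mathbf{p_0},K)$, into $\hat\delta_\epsilon=2\hat\eta_\epsilon\cdot\hat{\mathbf{r}}_\epsilon/\hat\nu_\epsilon$, the $\mathbf{r}_0$-contributions recombine to $\delta(\mathbf{p_0},K,\eta)$ and the remaining piece reindexes (absorbing one factor of $(1-\epsilon)$) to the correction in (\ref{deps}). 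This algebraic collapse is the step most susceptible to computational slippage and is in that sense the main obstacle; everything else is either forced by the block structure or is a soft analytic argument.

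The qualitative claims then follow from inspection of (\ref{deps}). Continuity on $(0,1]$ is immediate from uniform geometric bounds on $(1-\epsilon)^j$, and $\hat\delta_1=\delta(\mathbf{p_0},K,\eta)$ since every correction term carries a factor $(1-\epsilon)^j$ with $j\geq 1$. For strict monotonicity, termwise differentiation of the correction yields $-\frac{4}{\nu(\mathbf{p_0},K)}\sum_{j\geq 1}j(1-\epsilon)^{j-1}\,\eta\cdot K^{j-1}(\mathbf{p_1}-\mathbf{p_0})$: every summand is nonnegative because $\mathbf{p_1}\geq\mathbf{p_0}$ and $K^{j-1}$ has nonnegative entries, and at least one is strictly positive, since $\mu\cdot\mathbf{p_1}>\mu\cdot\mathbf{p_0}$ forces $p_1(i)>p_0(i)$ for some $i\in\mathcal{R}_0$, a state reached from every initial state under $K$ in finitely many steps. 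Finally, for (\ref{eps0}) I would apply the Abelian identity
\begin{equation*}
\lim_{\epsilon\downarrow 0}\epsilon\sum_{j=0}^\infty(1-\epsilon)^ja_j\;=\;\lim_{n\to\infty}\frac{1}{n}\sum_{j=0}^{n-1}a_j
\end{equation*}
with $a_j=\eta\cdot K^j(\mathbf{p_1}-\mathbf{p_0})$; the Cesaro limit exists and equals $\mu\cdot(\mathbf{p_1}-\mathbf{p_0})>0$ because $K$ has unique stationary distribution $\mu$, so the correction in (\ref{deps}) grows like $(1-\epsilon)/\epsilon$ and diverges as $\epsilon\downarrow 0$.
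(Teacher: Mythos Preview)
Your proposal is correct and takes a genuinely different route from the paper's proof. The paper does not invert the block matrix directly; instead, for the top half of $\hat{\mathbf r}_\e$ it invokes the probabilistic limit representation \eqref{rZn},
\[
r(i)=\lim_{n\to\infty}\Bigl\{\textstyle\sum_{j=1}^n \E_i[2p(R_j)-1]+e_iK^n\mathbf r\Bigr\},
\]
applied to the enlarged chain $\hat K_\e$. In that setting the first sum splits naturally into the $\mathbf p_0$-part and a geometrically truncated correction $\sum_j 2(1-\e)^j e_iK^{j-1}(\mathbf p_1-\mathbf p_0)$, and the term $e_i\hat K_\e^n\hat{\mathbf r}_\e$ is shown to have the same limit as $e_iK^n\mathbf r(\mathbf p_0,K)$ using the bottom-block identification; \eqref{relim} then drops out by comparing with \eqref{rZn} for $(\mathbf p_0,K)$. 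Your route is purely linear-algebraic: you compute the block inverse and reduce the off-diagonal piece using the three identities $\mu D_{1-p_0}K\mathbf r_0=0$, $\mu\cdot(\mathbf 1-\mathbf p_0)=1/2$, and $(I-K)\mathbf r_0=2\mathbf p_0-\mathbf 1$. The last of these is not stated explicitly in the paper, but your derivation of it from $A_{22}(\mathbf r_0+\mathbf 1)=\mathbf p_0$ together with the first two identities is correct. The trade-off is that the paper's argument is shorter once \eqref{rZn} is available, whereas yours is self-contained and does not rely on that earlier limit characterization.

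For \eqref{eps0} the paper argues more simply than you do: it observes directly that $\sum_{j\ge 0}\eta K^j(\mathbf p_1-\mathbf p_0)=\infty$ because some recurrent state $i\in\mathcal R_0$ has $p_1(i)>p_0(i)$ and is visited infinitely often in expectation, and then monotone convergence (all terms nonnegative) gives the divergence of the correction as $\e\downarrow 0$. Your Abelian/Ces\`aro argument is equally valid and in fact yields the sharper statement that the correction is asymptotic to $\tfrac{4}{\nu(\mathbf p_0,K)}\,\mu\cdot(\mathbf p_1-\mathbf p_0)\,\tfrac{1-\e}{\e}$ as $\e\downarrow 0$, but it is not needed for the lemma as stated.
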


\begin{proof}
It follows from the formula for $\d$ given in \eqref{eq:del} that 
\[
 \hat\d_\e(\ve{p_1},\ve{p_0},K,\eta) = \frac{2 ((1-\e)\eta,\e\eta) \cdot \mathbf{\hat{r}}_\e}{ \hat{\nu}_\e }, 
\quad \text{where } 
\mathbf{\hat{r}}_\e = \mathbf{r}( \mathbf{\hat{p}},\hat{K}_\e ) \text{ and } \hat\nu_\e = \nu( \mathbf{\hat{p}},\hat{K}_\e). 
\]
Therefore, if $\mathbf{\hat{r}}_\e =( \hat{r}_\e(1), \hat{r}_\e(2),\ldots,\hat{r}_\e(2N) )^t$ then \eqref{deps} will follow of we show that  for all $\epsilon\in(0,1]$
\begin{align}\label{nueconst}
 \hat\nu_\e& =  \nu( \mathbf{\hat{p}},\hat{K}_\e) = \nu( \ve{p_0}, K ),\ 
\text{and}\\
\label{relim}
\hat{\ve{r}}_\epsilon&=\hat{\ve{r}}(\mathbf{\hat{p}},\hat{K}_\e)= \bigg(\ve{r}(\ve{p_0},K)+2\sum_{j=1}^\infty(1-\epsilon)^jK^{j-1}(\ve{p_1}-\ve{p_0})  ,\ve{r}(\ve{p_0},K) \bigg)  
\end{align}
Note that \eqref{relim} states, in part, that the last $N$ coordinates of $\mathbf{\hat{r}}_\e$ are
equal to $\mathbf{r}(\ve{p_0},K)$ for any $\e\in (0,1]$. This follows easily
from the fact that the bottom right $N\times N$ block of the
transition matrix $\hat{K}_\e$ in \eqref{Keform} is equal to $K$.
Similarly, since the states $i=1,2,\ldots,N$ are transient for the
Markov chain with transition matrix $\hat{K}_\e$, it follows that the
stationary distribution $\tilde{\pi}$ corresponding to the pair
$(\mathbf{\hat{p}},\hat{K}_\e)$ is given by
$\tilde\pi(\mathbf{\hat{p}},\hat{K}_\e) =
(\mathbf{0},\tilde\pi(\ve{p_0},K))$.
Thus, the formula for the parameter $\nu$ in \eqref{parforms} implies
that
\[\hat{\nu}_\e = 2 + 2 \tilde{\pi}(\mathbf{\hat{p}},\hat{K}_\e) \cdot \mathbf{\hat{r}}_\e = 2 + 2 \tilde{\pi}(\ve{p_0},K)\cdot \mathbf{r}(\ve{p_0},K) = \nu(\ve{p_0},K).\]
This proves \eqref{nueconst}, and it remains to show \eqref{relim}  for the first $N$ coordinates of $\hat{\ve{r}}_\epsilon$.

Applying the formula for $r(i)$ from \eqref{rZn}, we have that 
\begin{align}
 \hat{r}_\e(i) &= \lim_{n\ra\infty} \bigg\{ \sum_{j=1}^n \E_i\left[ 2\left(p_1(R_j) \ind{j\leq \gamma_\e} + p_0(R_j)\ind{j>\gamma_\e} \right) - 1 \right] + e_i \hat{K}_\e^n \mathbf{\hat{r}}_\e \bigg\} \nonumber \\
&= \lim_{n\ra\infty} \bigg\{ \sum_{j=1}^n 2 (1-\e)^{j}  e_i K^{j-1} (\mathbf{p}_1 - \mathbf{p}_0)  + \sum_{j=1}^n\E_i \left[ 2 p_0(R_j) - 1 \right] +  e_i \hat{K}_\e^n \mathbf{\hat{r}}_\e \bigg\} \label{hatre1}
\end{align}
It follows from the form of the matrix $\hat{K}_\e$ and the fact that the last $N$ entries of the vector $\hat{r}_\e$ are equal to $\mathbf{r}(\ve{p_0},K)$ that
\[
 \lim_{n\ra\infty} \left| e_i \hat{K}_\e^n \mathbf{\hat{r}}_\e - e_i K^n \mathbf{r}(\ve{p_0},K) \right| = 0. 
\]
Together with \eqref{rZn} this implies that
 the limit of the last two terms in \eqref{hatre1} is the $i$-th entry of the vector $\mathbf{r}(\ve{p_0},K)$. 
Since the limit of the first term in \eqref{hatre1} is the corresponding infinite sum we obtain \eqref{relim}.

Finally, to prove \eqref{eps0}, it is enough to show that 
\begin{equation}\label{Kp0p1}
\sum_{j=0}^\infty \eta K^j (\mathbf{p}_0-\mathbf{p}_1) = \infty. 
\end{equation}
Recalling that $p_1\ge p_0$ and $\mu\cdot \ve{p_1}>\mu\cdot \ve{p_0}=1/2$ we conclude that there is a strict inequality $p_1(i) > p_0(i)$ for some state $i \in \mathcal{R}_0$ which is therefore a recurrent state for the Markov chain $R_j$. 
Since the $i$-th entry of $\sum_{j=0}^\infty \eta K^j$ equals the expected number of times the Markov chain $R_j$ visits the state $i$, \eqref{Kp0p1} follows. 
\end{proof}

\subsection{Proof of Theorem \ref{th:noncrit}}

Without loss of generality we may assume that $\bar{p} > 1/2$.  We
shall apply Lemma~\ref{depslim}. Choose $\ve{p_1}=\ve{p}$. Since
$\bar{p}>1/2$, there exists 
$\ve{p_0} = (p_0(1),p_0(2),\ldots,p_0(N))^t \in (0,1)^N$ such that
\begin{itemize}
 \item $p_0(i) \leq p_1(i)$ for all $i\in \mathcal{R}$. 
 \item $\mu \cdot \ve{p_0} = 1/2$. 
\end{itemize}
Let the distribution $\eta$ on $\mathcal{R}$ be fixed. By Lemma \ref{depslim} we may choose an $\e\in (0,1]$ so that $\hat{\d}_\e = \d_\e(\ve{p_1},\ve{p_0},K,\eta) > 4$, and we will keep this choice of $\e$ fixed for the remainder of the proof. 

\textbf{Transience:}
Since $\hat{\d}_\e > 1$, it follows from Theorem \ref{th:fbp} that $\s_0^{\hat{U}^\e} = \infty$ with positive probability for any initial condition $\hat{U}_0^{\e} = y \geq 1$ for the forward BLP $\hat{U}^\e$. Since the above coupling of the forward BLP is such that $U_k \geq \hat{U}^\e_k$ for all $k\geq 1$, this implies that $P_\eta^{U,y}( \s_0^U = \infty) > 0$ for any $y\geq 1$. From this, the proof that the ERW is transient to the right is the same as in the proof of Theorem \ref{th:rectran}. 

\textbf{Ballisticity and Gaussian limits:} 
Since $\hat{\d}_\e > 4$, it follows from Theorem \ref{th:bbp} that 
$\s_0^{\hat{V}^\e}$ and $\sum_{i=0}^{\s_0^{\hat{V}^\e}-1} \hat{V}_i^\e$ 
have finite second moments when the backward BLP $\hat{V}^\e$ is started with $\hat{V}^\e_0 = 0$. 
Since the above coupling is such that $V_k \leq \hat{V}^\e_k$ for all $k\geq 0$ this implies that
\begin{equation}\label{ncbbp}
 E_\eta^{V,0}[ (\s_0^V)^2 ] < \infty \quad\text{and}\quad E_\eta^{V,0}\bigg[ \bigg( \sum_{i=0}^{\s_0^V-1} V_i \bigg)^2 \bigg] < \infty. 
\end{equation}
As noted in the proof of Theorem \ref{th:LLN}, a formula for the limiting speed $\vp$ when the ERW is transient can be given by 
\[
 \frac{1}{\vp} = 1 + 2\, \frac{ E_\eta^{V,0}\left[  \sum_{i=0}^{\s_0^V-1} V_i\right] }{ E_\eta^{V,0}[ \s_0^V ] }. 
\]
Since \eqref{ncbbp} implies that the fraction on the right is finite, we can conclude that the limiting speed $\vp>0$. 
The proof of the Gaussian limiting distributions for the ERW is the same as for the proof of the limiting distributions of the critical ERWs in the case $\d>4$ since all that is needed are the finite second moments for the backward BLP given in \eqref{ncbbp}.

\section{Proofs of Theorems~\ref{th:fbp} and \ref{th:bbp}}\label{sec:3lem}

We shall discuss the proof of Theorem~\ref{th:bbp}, since
Theorem~\ref{th:fbp} can be derived in exactly the same way. The main
idea behind the proof of Theorem~\ref{th:bbp} is very simple. The
rescaled critical backward BLP  can be approximated (see Lemma~\ref{lem:DA}
below) by a constant multiple of a squared Bessel process of the
generalized dimension
\begin{equation}\label{Bsqdim}
\frac{4(1+\eta\cdot\tilde{\br})}{\tilde{\nu}}\overset{(\ref{eq:del}),\mathrm{L.\,}\ref{deleq}}
{=}\frac{4}{\nu}+2\tilde{\delta}\overset{\mathrm{L.\,}\ref{deleq}}
{=}2(1-\delta).
\end{equation}
The squared Bessel process of dimension $2$ is just the square of the
distance of the standard planar Brownian motion from the
origin, and this dimension is critical. For dimensions less than $2$
the squared Bessel process hits $0$ with probability $1$. For
dimensions $2$ or higher the origin is not attainable. Thus,
$\delta=0$ should be a critical value for $V$. This suggests that the
process dies out with probability 1 if $\delta>0$ and has a positive
probability of survival if $\delta<0$. Moreover, the tail decay
exponents for $\delta>0$ can also be read off from those of the
corresponding squared Bessel process. The boundary case $\delta=0$ is
delicate and has to be handled separately.

Nevertheless, a lot of work needs to be done to turn the above idea
into a proof. This is accomplished in \cite{kmLLCRW} and \cite{DK14}
for the model with bounded cookie stacks. The proofs of the respective
results in the cited above papers can be repeated {\it verbatim}
provided that we reprove several lemmas which depend on the specifics
of the backward BLP $V$. Therefore, below we only provide a rough
sketch of the proof and state several lemmas which depend on the
properties of our $V$. These lemmas cover all $\delta\in\R$ and are
used later in the section to discuss the three cases: $\delta>0$,
$\delta=0$, and $\delta<0$. Their proofs are given in the next
section.

\subsection{Sketch of the proof}
Let us start with the already mentioned approximation by the squared
Bessel process.\footnote{Following the number of each lemma in this
  subsection are the corresponding results in the literature which it
  replaces.}

\begin{lem}[Diffusion approximation, Lemma 3.1 of \cite{kmLLCRW}, Lemma 3.4 of \cite{DK14}]\label{lem:DA}  Fix an arbitrary $\epsilon>0$, 
  $y>\epsilon$, and a sequence $y_n\to y$ as $n\to\infty$. 
Define $Y^{\epsilon,n}(t)=\dfrac{V_{\floor{nt}\wedge \sigma_{\varepsilon n}}}{n}$, $t\ge 0$. 
Then, under $P_\eta^{V,ny_n}$ the process $Y^{\epsilon,n}$
  converges in the Skorokhod ($J_1$) topology  to
  $Y(\cdot\wedge \sigma^Y_\epsilon)$ where $Y$ is the solution of
\begin{equation}
\label{SqB2}
dY(t)=(1+\eta\cdot\tilde{\br})dt+\sqrt{\nu Y(t)}\, dW(t), \quad Y(0)=y.
\end{equation}
\end{lem}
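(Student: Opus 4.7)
The plan is to interpret this as a classical diffusion approximation for the time-homogeneous Markov chain $V$ (time-homogeneous under the averaged measure by the i.i.d.\ structure of the cookie stacks) and to identify the limit through a martingale-problem argument. The infinitesimal coefficients of the rescaled process come directly from the sharp one-step asymptotics already established: Proposition~\ref{pr:Vmean} gives
\[
E_\eta[V_1\mid V_0 = m] - m = 1 + \eta\cdot\tilde{\mathbf{r}} + O(e^{-cm}),
\]
and Proposition~\ref{Vvarerror} together with $\tilde\nu = \nu$ (Proposition~\ref{deleq}) gives $\Var_\eta(V_1\mid V_0=m) = \nu m + O(1)$. Stopping at $\sigma^V_{\varepsilon n}$ ensures $V_{k-1}\geq \varepsilon n$ on the portion of the trajectory that matters, so these expansions apply uniformly. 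A single step of length $1/n$ therefore contributes drift $(1+\eta\cdot\tilde{\mathbf{r}})/n$ and conditional variance $\nu\, Y^{\varepsilon,n}(s)/n$, with $o(1/n)$ errors, matching the generator
\[
Lf(y) = (1+\eta\cdot\tilde{\mathbf{r}})\, f'(y) + \tfrac{\nu}{2}\, y\, f''(y)
\]
of the target SDE (\ref{SqB2}).

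With the coefficients in hand, I would next prove tightness in the $J_1$ topology via Aldous's criterion. The main input is a fourth-moment bound of the form $E_\eta[(V_1-V_0)^4\mid V_0 = m] = O(m^2)$. This bound follows because $V_1 = F^1_{m+1}$ is a sum of $m+1$ inter-success gap variables each of which, by the ellipticity assumption on $p$, is stochastically dominated by a geometric variable with a uniformly bounded parameter, while the Markovian dependence between consecutive gaps is controlled by the exponential mixing of the embedded chain $\{I_m\}_{m\geq 0}$ recorded in (\ref{pierror}). The same estimate yields $\max_{k\leq nT}|V_k - V_{k-1}|/n \to 0$ in probability, so any subsequential limit is continuous. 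To identify the limit, I would then show that for each $f \in C_c^2([\varepsilon/2,\infty))$ the process
\[
f(Y^{\varepsilon,n}(t)) - f(Y^{\varepsilon,n}(0)) - \int_0^t (Lf)(Y^{\varepsilon,n}(s))\, ds
\]
is a martingale up to an $o(1)$ error, by Taylor-expanding $f$ and substituting the one-step drift and variance estimates above (with the fourth-moment bound controlling the Taylor remainder). Since $\sqrt{\nu y}$ is Lipschitz on $[\varepsilon/2,\infty)$, the SDE (\ref{SqB2}) has a unique strong solution up to $\sigma^Y_\varepsilon$ starting from $y$, so the associated martingale problem has a unique solution and every subsequential weak limit must equal $Y(\cdot\wedge\sigma^Y_\varepsilon)$. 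The correct initial condition holds because $Y^{\varepsilon,n}(0) = y_n \to y$ by hypothesis.

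The main obstacle is not the diffusion-approximation framework, which is classical, but the uniform moment and mixing estimates in the Markovian-cookie setting. Unlike the bounded-cookie models of \cite{kmLLCRW} and \cite{DK14}, a single step of $V$ here aggregates $m+1$ dependent inter-success times driven by a finite-state chain on $\mathcal{R}$, so one-step i.i.d.\ methods do not apply directly. Uniformity in both the starting cookie distribution $\eta$ and the current level $m$ requires combining the ellipticity of $p$ with the exponential ergodicity of $\{I_m\}_{m\geq 0}$ from (\ref{pierror}), and packaging this mixing into uniform higher-moment bounds on $V_1 - V_0$ is the genuinely new ingredient compared to the bounded-cookie case.
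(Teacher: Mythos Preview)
Your approach is essentially the same as the paper's: both derive the diffusion limit via the martingale problem, feeding in the one-step mean/variance asymptotics of Propositions~\ref{pr:Vmean} and \ref{Vvarerror} together with a higher-moment control (your $O(m^2)$ fourth-moment estimate is exactly the paper's Lemma~\ref{lem:4th}, itself a consequence of the concentration bound Lemma~\ref{lem:dev}). The paper organizes things slightly differently---it invokes the Ethier--Kurtz convergence theorem on a modified process $\bar V_n$ (forcing at least $N_n$ summands so the drift/variance conditions hold globally) and then couples $\bar V_n$ to $V$ until exit from $[\varepsilon n,\infty)$, rather than working directly with the stopped chain and Aldous' criterion---but the substance and the key estimates are identical.
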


\begin{rem*}
  For the ERW with bounded cookie stacks the convergence is known to
  hold up to $\sigma^Y_0$ (or in $D([0,\infty))$ if
  $\sigma^Y_0=\infty$) as long as the corresponding squared Bessel
  process has dimension other than $2$ (see \cite[Theorem 3.4]{kzEERW}
  for the forward branching process). But such result does not seem to
  significantly shorten the proof of Theorem~\ref{th:bbp}, and we
  shall not show it.
\end{rem*}

To prove (\ref{bbptails}) we need some tools to handle $V$ when it
starts with $y\ll \epsilon n$ or falls below $\epsilon n$. The idea
again comes from the properties of $Y$. It is easy to check that
$Y^\delta$ for $\delta\ne 0$ (and $\ln Y$ for $\delta=0$) is a local
martingale.  Then by a standard calculation we get that for all
$a>1$ and $j\in\N$
\[P(\tau^Y_{a^{j-1}}<\tau^Y_{a^{j+1}}|\,
Y(0)=a^j)=\frac{a^\delta}{1+a^\delta}.\]
The same power (logarithm for $\delta=0$) of the rescaled
process $V$ is close to a martingale, and we can prove a similar
result for all $\delta\in\R$ (for $\delta=0$ it is sufficient to
set $a=2$ below).
\begin{lem}[Exit distribution, Lemma 5.2 of \cite{kmLLCRW},
  Lemma~A.3 of \cite{DK14}]\label{lem:exit}
  Let $a>1$, $|x-a^j| \leq a^{2j/3}$, and $\gamma$ be the exit time
  from $(a^{j-1},a^{j+1})$. Then for all sufficiently large $j\in\N$
\[
\left| P_\eta^{V,x}( V_\gamma \leq a^{j-1} ) - \frac{a^\delta}{a^\delta + 1}
\right| \leq a^{-j/4}.
\]
\end{lem}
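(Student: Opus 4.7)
The plan is to locate a function $\phi$ such that $\phi(V_k)$ is an approximate $P_\eta^{V,x}$-martingale and then apply optional stopping at $\gamma$. The choice of $\phi$ is dictated by Lemma~\ref{lem:DA}: a direct It\^o computation applied to \eqref{SqB2} shows that, because the generalized dimension of the limiting diffusion $Y$ equals $2(1-\delta)$ by \eqref{Bsqdim}, the power $Y^\delta$ is a local martingale when $\delta\ne 0$, while in the critical case $\delta=0$ the local martingale is $\ln Y$. Accordingly I will take $\phi(y)=y^\delta$ for $\delta\ne 0$ and $\phi(y)=\ln y$ for $\delta=0$ (in the latter case the special value $a=2$ used in the statement plays no essential role since $a^\delta/(a^\delta+1)=1/2$ for every $a$).

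The first step is to quantify the martingale defect. A second-order Taylor expansion of $\phi$ about $y$ gives
\[
 E_\eta^{V,y}[\phi(V_1)]-\phi(y) = \phi'(y)\,E_\eta^{V,y}[V_1-y] + \tfrac{1}{2}\phi''(y)\,E_\eta^{V,y}[(V_1-y)^2] + O\!\left(y^{\delta-3}\,E_\eta^{V,y}[|V_1-y|^3]\right).
\]
Substituting Propositions~\ref{pr:Vmean}--\ref{Vvarerror}, the first two terms cancel exactly by the very definition of $\delta$ via \eqref{eq:del} and \eqref{Bsqdim}; the exponentially small correction to the mean and the $O(1/y)$ correction to the variance contribute only $O(y^{\delta-2})$. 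For the cubic remainder, note that conditional on the cookie environment, $V_1=F^1_{y+1}$ is a sum of $y+1$ independent geometric variables whose parameters are uniformly bounded away from $0$ and $1$ by ellipticity and finiteness of $\mathcal{R}$; this yields $E_\eta^{V,y}[|V_1-y|^3]=O(y^{3/2})$ and hence a per-step defect of size $O(y^{\delta-3/2})$, uniform in $y\in[a^{j-1},a^{j+1}]$. A parallel Taylor computation applied to $\psi(y)=y^2$ shows that $E_\eta^{V,y}[\psi(V_1)-\psi(y)] \ge c y$ inside $(a^{j-1},a^{j+1})$, which combined with $\psi(V_\gamma)\le C a^{2j}$ and optional stopping gives $E_\eta^{V,x}[\gamma]\le C a^j$.

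Combining these two estimates, optional stopping applied to $\phi(V_k)$ at $\gamma$ yields
\[
 E_\eta^{V,x}[\phi(V_\gamma)] = \phi(x) + O(a^{j(\delta-1/2)}).
\]
The same uniform geometric-tail bound that controlled the third moment makes the overshoot at the exit boundary only $O(\log a^j)$ with high probability, so $\phi(V_\gamma)$ coincides with $\phi(a^{j\pm 1})$ up to multiplicative error $1+O(j\,a^{-j})$. Using the hypothesis $|x-a^j|\le a^{2j/3}$ to write $\phi(x)=a^{j\delta}(1+O(a^{-j/3}))$ together with the exact identity $\phi(a^{j\pm 1})=a^{j\delta}a^{\pm\delta}$, setting $p=P_\eta^{V,x}(V_\gamma\le a^{j-1})$ reduces the optional-stopping identity to the scalar equation $1=pa^{-\delta}+(1-p)a^{\delta}+O(a^{-j/3})$, whose exact solution is $p=a^\delta/(a^\delta+1)$. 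Dividing the error by $|a^\delta-a^{-\delta}|$ (a positive constant independent of $j$) gives the claimed bound, in fact with margin $O(a^{-j/3})$.

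The main obstacle is the careful bookkeeping of the competing error sources — the Taylor-remainder defect per step, the exit-time estimate, the approximation of the starting point $x$ by $a^j$, and the boundary overshoot. Individually each is a small negative power of $a^j$, and the delicate point is ensuring that none of them blows up after being divided by the $O(1)$ quantity $\phi(a^{j+1})-\phi(a^{j-1})$. The third-moment estimate $E[|V_1-y|^3]=O(y^{3/2})$ and the uniform exponential tail used to bound the overshoot are the places where the Markovian cookie-stack assumption and ellipticity enter most directly, via the uniform lower bound on $p(i)(1-p(i))$ over $i\in\mathcal{R}$.
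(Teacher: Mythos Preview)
Your overall approach is the same as the paper's: one shows $\phi(V_k)$ is a near-martingale for $\phi(y)=y^\delta$ (or $\ln y$), bounds the per-step defect via a Taylor expansion and the mean/variance asymptotics from Propositions~\ref{pr:Vmean}--\ref{Vvarerror}, bounds $E_\eta^{V,x}[\gamma]$ by $Ca^j$, and then applies optional stopping together with overshoot control. However, there are two genuine gaps in your argument.

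First, your justification of the third-moment bound is incorrect. You write that, conditional on the cookie environment, $V_1=F^1_{y+1}=\sum_{m=1}^{y+1}\tilde G_m^1$ is a sum of \emph{independent} geometric random variables. This is false: conditional on $\omega$, the Bernoulli trials $\xi_1(j)$ are independent, but the gaps $\tilde G_m^1$ between successes are not, because the parameters $\omega_1(j)$ vary with $j$ and the index at which $\tilde G_m^1$ ``starts'' depends on $\tilde G_1^1,\ldots,\tilde G_{m-1}^1$. The bound $E_\eta^{V,y}[|V_1-y|^3]=O(y^{3/2})$ is nevertheless correct, but it requires the fourth-moment estimate of Lemma~\ref{lem:4th}, which in turn rests on the concentration inequality of Lemma~\ref{lem:dev}; the paper's proof handles the Taylor remainder using this bound (with $\phi$ replaced by a smooth truncation $s$ so that $\|s'''\|_\infty<\infty$, which avoids having to localize to control $\phi'''(\xi)$ at an intermediate point $\xi$).

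Second, your Lyapunov-function argument for $E_\eta^{V,x}[\gamma]\le Ca^j$ via $\psi(y)=y^2$ fails for $\delta\ge 2$. A direct computation using Propositions~\ref{pr:Vmean}--\ref{Vvarerror} and \eqref{Bsqdim} gives $E_\eta^{V,y}[\psi(V_1)-\psi(y)]=\nu(2-\delta)\,y+O(1)$, so the drift of $\psi$ is nonpositive once $\delta\ge 2$ and no upper bound on $E[\gamma]$ follows. The paper instead obtains $E_\eta^{V,x}[\gamma]\le Ca^j$ from the diffusion approximation of Lemma~\ref{lem:DA}; alternatively one can use a $\delta$-dependent Lyapunov function, but $y^2$ alone does not suffice. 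Finally, for the overshoot step you should invoke the quantitative estimates of Lemma~\ref{lem:OS} rather than an informal ``$O(\log a^j)$'' claim; this is what turns $E_\eta^{V,x}[\phi(V_\gamma)]$ into $p\,a^{-\delta}+(1-p)\,a^\delta+O(a^{-j/3})$ in the paper.
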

One of the estimates needed for the proof of Lemma \ref{lem:exit} is the following lemma which shows that the process $V$ exits the interval $(a^{j-1},a^j)$ not too far below $a^{j-1}$ or above $a^j$.

\begin{lem}[``Overshoot'', Lemma 5.1 of \cite{kmLLCRW}]\label{lem:OS}
  There are constants $\Cl[c]{ol}, \Cl[c]{ou}>0$ and $N\in\mathbb{N}$
  such that for all $x\geq N$, $y\geq 0$, and every initial
  distribution $\eta$ of the environment Markov chain
\[\max_{0\leq z<x} P^{V,z}_\eta(V_{\tau_x}>x+y\,|\,\tau_x<\sigma_0)\leq \Cr{ol} \left(e^{-\Cr{ou} y^2/x}+e^{-\Cr{ou} y}\right) \]
and
\[ \max_{x<z<4x} P^{V,z}_\eta(V_{\sigma_x\wedge \tau_{4x}}<x-y)\leq
\Cr{ol} e^{-\Cr{ou} y^2/x}.\]
\end{lem}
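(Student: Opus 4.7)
My plan is to reduce both inequalities to one-step tail estimates for a single increment of $V$ under the averaged measure $P_\eta$ and then apply Bernstein-type concentration for sums of Bernoullis driven by the environment Markov chain.

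Because $V$ is a time-homogeneous Markov chain under $P_\eta$, decomposing an excursion up to the relevant exit time $T$ according to the value $m=V_{T-1}$ just before exit and invoking the strong Markov property give
$$P_\eta^{V,z}(V_{\tau_x}>x+y,\,\tau_x<\sigma_0)=\sum_{m=1}^{x-1}N(m)\,P_\eta^{V,m}(V_1>x+y),$$
where $N(m):=E_\eta^{V,z}[\#\{j<\tau_x\wedge\sigma_0:V_j=m\}]$; the same identity with $\{V_1\geq x\}$ in place of $\{V_1>x+y\}$ equals $P_\eta^{V,z}(\tau_x<\sigma_0)$. Dividing bounds the first conditional probability by $\sup_{0<m<x}P_\eta^{V,m}(V_1>x+y)/P_\eta^{V,m}(V_1\geq x)$. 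The second inequality admits an analogous decomposition which reduces it to $\sup_{x<m<4x}P_\eta^{V,m}(V_1<x-y)$ together with controls on the weights $N(m)$ and on the exit probabilities.

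For the one-step estimates I exploit the identity $\{V_1\geq k\}=\{Z_{k+m+1}\leq m\}$ with $Z_n:=\sum_{j=1}^n\xi_1(j)$. Decomposing $Z_n=M_n+\sum_{j=1}^n p(R^1_j)$, Azuma--Hoeffding controls the bounded martingale $M_n$, and a Bernstein-type inequality for additive functionals of the finite-state Markov chain $(R^1_j)$ -- using the exponential mixing \eqref{pierror} of $K$ -- controls the second summand. Absorbing the $\eta$-dependent transient into an $e^{-cn}$ correction and using $\bar{p}=1/2$ gives
$$P_\eta\bigl(Z_n\leq n/2-s\bigr)\leq C\bigl(e^{-cs^2/n}+e^{-cn}\bigr),\qquad s\geq 0.$$
Applied with $n=x+y+m+2$ and $s=(x+y-m)/2-O(1)\geq (y+1)/2$, this yields $P_\eta^{V,m}(V_1>x+y)\leq C(e^{-cy^2/x}+e^{-cy})$ uniformly in $m\leq x-1$, with the two terms covering the regimes $y\leq x$ and $y>x$. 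For the undershoot, $V_1\geq 0$ combined with $E_\eta^{V,m}[V_1]=m+O(1)$ (Proposition~\ref{pr:Vmean}) forces the downward deviation to be bounded by $m$, so only the one-sided Gaussian tail appears: for $m\in(x,4x)$ and $s:=m-x+y\geq y$, $P_\eta^{V,m}(V_1<x-y)\leq C\exp(-cs^2/(m+1))\leq Ce^{-cy^2/x}$.

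For the first inequality I convert the numerator bound into a ratio bound by splitting on $m$. When $x-m\leq K\sqrt{x}$ for a large enough constant $K$, a Berry--Esseen estimate for $Z_n$ via the same decomposition yields the matching lower bound $P_\eta^{V,m}(V_1\geq x)\geq c_0(K)>0$, so the ratio is controlled directly. When $x-m>K\sqrt{x}$, the numerator improves by an extra factor $e^{-cK^2}$ coming from the $(x-m)^2/x$ contribution to the Bernstein exponent; after summing against $N(m)$ and dividing by $P_\eta^{V,z}(\tau_x<\sigma_0)$, this factor is absorbed into the constants. The main technical difficulty I anticipate is establishing the Bernstein inequality for $\sum_{j=1}^n p(R^1_j)$ with the correct Gaussian exponent uniformly over all initial distributions $\eta$, including those supported on transient states outside $\mathcal{R}_0$; this is handled by splitting off the exponentially small initial transient via \eqref{pierror} and applying a standard Bernstein bound on the restricted ergodic chain. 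A secondary point is the quantitative CLT lower bound for $P_\eta^{V,m}(V_1\geq x)$ when $m$ lies within $K\sqrt{x}$ of $x$, which follows from Berry--Esseen applied through the same martingale-plus-Markov-functional decomposition of $Z_n$.
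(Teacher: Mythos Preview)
Your reduction to the one-step ratio
\[
\sup_{0<m<x}\frac{P_\eta^{V,m}(V_1>x+y)}{P_\eta^{V,m}(V_1\ge x)}
\]
via the occupation-time decomposition is correct and is exactly what the paper does. Your numerator upper bound through the $Z_n$ representation and Bernstein-type concentration is also fine and equivalent to the paper's Lemma~\ref{lem:dev}.

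The gap is in the case $x-m>K\sqrt{x}$. You assert that the extra factor $e^{-cK^2}$ in the numerator ``is absorbed into the constants after summing against $N(m)$ and dividing by $P_\eta^{V,z}(\tau_x<\sigma_0)$.'' This does not work: for small starting points $z$, $\sum_{m\in B}N(m)$ is of order $E^{V,z}[\tau_x\wedge\sigma_0]$, which is not comparable to $P^{V,z}(\tau_x<\sigma_0)$; no fixed $K$ makes $e^{-cK^2}\sum_{m\in B}N(m)\le C\,P^{V,z}(\tau_x<\sigma_0)$ uniformly in $z$ and $x$. Equivalently, to bound the ratio termwise you would need a \emph{lower} bound on $P_\eta^{V,m}(V_1\ge x)$ in the large-deviation regime $x-m\gg\sqrt{x}$, where Berry--Esseen gives nothing. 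Your proposal provides only upper bounds in that regime.

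The paper circumvents this entirely by working with the centered increments $\tilde G_j-1$ and introducing the stopping time $\rho_m=\min\{n:\sum_{j=1}^n(\tilde G_j-1)\ge m\}$. Decomposing at $\rho_m$, both $P_\eta\bigl(\sum_{j=1}^{x-m}(\tilde G_j-1)>y+m\bigr)$ and $P_\eta\bigl(\sum_{j=1}^{x-m}(\tilde G_j-1)\ge m\bigr)$ acquire the common factor $P_\eta(\rho_m\le x-m)$, which cancels in the ratio. What remains is uniformly controlled: for the denominator by $\inf_{n,i}P_i\bigl(\sum_{j=1}^n(\tilde G_j-1)\ge 0\bigr)>0$ (a CLT consequence), and for the numerator by a geometric single-jump tail plus a concentration bound on the post-$\rho_m$ segment. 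This is the missing idea you need; the Bernstein inequality you flag as the main difficulty is in fact the routine part.
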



Lemma \ref{lem:OS} shows that the process $V$ typically exits the
interval $(a^{j-1},a^j)$ close enough to the boundary so that we can
repeatedly apply Lemma \ref{lem:exit} to couple $V$ with a
birth-and-death-like process to obtain the following estimate on exit
probabilities from large intervals $(a^\ell,a^u)$  and
  ultimately handle $V$ when it is below $\epsilon n$.

\begin{lem}[Lemma 5.3 of \cite{kmLLCRW} and Lemma
  A.1 of \cite{DK14}] \label{lem:main} For each $a\in(1,2]$ there is an $\ell_0\in\N$ and a small positive number $\lambda$
  such that if $\ell, m, u, x\in\N$ satisfy $\ell_0\le\ell<m<u$ and
  $|x-a^m|<a^{2m/3}$
  then \[\frac{h^-(m)-h^-(\ell)}{h^-(u)-h^-(\ell)}\le
    P^{V,x}_\eta(\sigma^V_{a^\ell}>\tau^V_{a^u})\le
    \frac{h^+(m)-h^+(\ell)}{h^+(u)-h^+(\ell)},\] where for $j\ge
      1$ 
\[h^\pm(j)=
      \begin{cases}
        \prod_{i=1}^j(a^\delta\mp a^{-\lambda i}),&\text{if }\delta>0;\\
      \prod_{i=1}^j(a^{-\delta}\mp a^{-\lambda i})^{-1},&\text{if }\delta<0;\\
      j\mp\frac1j,&\text{if }\delta=0.
      \end{cases}
      \] Moreover, for $\delta\ne 0$ there are $K_i:\N\to(0,\infty)$,
      $i=1,2$, such that $K_i(\ell)\to 1$ as $\ell\to \infty$ and for
      all $j>\ell$
      \begin{equation}
        \label{asymp}
        K_1(\ell)a^{(j-\ell)\delta}\le
      \frac{h^\pm(j)}{h^\pm(\ell)}\le K_2(\ell)a^{(j-\ell)\delta}.
      \end{equation}
\end{lem}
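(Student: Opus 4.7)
The plan is to compare $V$ macroscopically with two birth-and-death chains on $\Z$ whose exit distributions can be computed exactly and sandwich the quantity of interest. I would introduce the level stopping times $\tau_0 = 0$ and $\tau_{k+1} = \inf\{n > \tau_k : V_n \notin (a^{J_k - 1}, a^{J_k+1})\}$, where $J_k$ denotes the integer level determined by $V_{\tau_k}$ with $J_0 = m$. By Lemma~\ref{lem:exit} the one-step transition of the level process satisfies $P(J_{k+1} = J_k - 1 \mid J_k = j) = a^\delta/(a^\delta + 1) + O(a^{-j/4})$, provided the starting position lies in the window $|V_{\tau_k} - a^{J_k}| \le a^{2 J_k/3}$; this window condition is maintained iteratively by Lemma~\ref{lem:OS}, whose overshoot tails are sub-Gaussian on the much smaller scale $\sqrt{a^{J_k}}$ and whose rare failure events contribute only a stretched-exponentially small additional error at each visited level.

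Next I would construct two comparison birth-and-death chains $L^\pm$ on $\{\ell, \ldots, u\}$, absorbed at the endpoints, whose down-probabilities $p_j^\pm$ are chosen so that, for a sufficiently small $\lambda \in (0, 1/4)$ and all $j \ge \ell_0$, one has $p_j^+ < a^\delta/(a^\delta+1) - C a^{-j/4}$ and $p_j^- > a^\delta/(a^\delta+1) + C a^{-j/4}$, yet also so that the products
\[
h^\pm(j) = \prod_{i=1}^{j} \bigl(a^\delta \mp a^{-\lambda i}\bigr)
\]
(together with the corresponding formulas when $\delta \le 0$) are exactly the scale functions of $L^\pm$. The direct identity $h^\pm(j+1) - h^\pm(j) = h^\pm(j)(\beta_{j+1}^\pm - 1)$, where $\beta_i^\pm := a^\delta \mp a^{-\lambda i}$, gives
\[
\frac{h^\pm(j+1) - h^\pm(j)}{h^\pm(j) - h^\pm(j-1)} = \beta_j^\pm \cdot \frac{\beta_{j+1}^\pm - 1}{\beta_j^\pm - 1} = \frac{p_j^\pm}{q_j^\pm},
\]
which determines $p_j^\pm$ and makes $h^\pm(L^\pm_{\cdot \wedge \sigma_\ell \wedge \tau_u})$ a martingale. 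By the first paragraph, for $\ell_0$ large one obtains a synchronous coupling $L^- \preceq J \preceq L^+$ until the first exit from $\{\ell, \ldots, u\}$. Optional stopping applied to $h^\pm(L^\pm)$ then yields $P_m^{L^\pm}(\tau_u < \sigma_\ell) = (h^\pm(m) - h^\pm(\ell))/(h^\pm(u) - h^\pm(\ell))$, and the two-sided bound in the lemma follows from the coupling.

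For the asymptotic \eqref{asymp} with $\delta > 0$, I would write
\[
\frac{h^\pm(j)}{h^\pm(\ell)} = a^{\delta(j-\ell)} \prod_{i=\ell+1}^{j} \bigl(1 \mp a^{-(\lambda+\delta)i}\bigr)
\]
and observe that the infinite product $\prod_{i \ge 1}(1 \mp a^{-(\lambda+\delta)i})$ converges absolutely, so the tail factor tends to $1$ as $\ell \to \infty$ uniformly in $j > \ell$; the case $\delta < 0$ is identical after pulling a factor of $a^\delta$ out of each $(a^{-\delta} \mp a^{-\lambda i})^{-1}$.

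The main obstacle will be the careful bookkeeping in the coupling step: the $O(a^{-j/4})$ error from Lemma~\ref{lem:exit}, the sub-Gaussian overshoot from Lemma~\ref{lem:OS}, and the $a^{-\lambda j}$ margin must be reconciled uniformly in the starting point $x$ with $|x - a^m| \le a^{2m/3}$, and the iterative application requires these errors to accumulate in a summable fashion along the excursion. The boundary case $\delta = 0$ is separately delicate, since then $\beta_j^\pm \to 1$ and the scale function grows only linearly ($h^\pm(j) = j \mp 1/j$), so that the restriction $a = 2$ is used to keep the $1/j^2$-scale martingale correction distinguishable from the $a^{-\lambda j}$ perturbation; this case parallels Lemma~A.1 of \cite{DK14} adapted to the present Markov-cookie setting.
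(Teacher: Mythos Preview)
Your proposal is correct and follows precisely the approach the paper invokes by reference to \cite[Lemma~5.3]{kmLLCRW} and \cite[Lemma~A.1]{DK14}: track the geometric level of $V$, use Lemmas~\ref{lem:exit} and~\ref{lem:OS} (together with the natural monotonicity of $V$ in its initial value, which underpins your synchronous coupling) to sandwich the level process between two birth-and-death chains whose scale functions are exactly $h^\pm$, and apply optional stopping. Your treatment of the asymptotic~\eqref{asymp} via the absolutely convergent tail product and your remark that $\delta=0$ requires the separate linear scale function with $a=2$ also match the paper's discussion.
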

\begin{proof}
 The proof is essentially identical to the proof of \cite[Lemma 5.3]{kmLLCRW} and relies only on the properties proved already in Lemmas \ref{lem:exit}, \ref{lem:OS} and the fact that the BLP are naturally monotone with respect to their initial value $V_0$. 
The proof of \cite[Lemma 5.3]{kmLLCRW} was given for $\d>0$, but essentially the same proof holds for $\d\leq 0$ with 
only minor changes needed due to the form of the functions $h^{\pm}(j)$. 
\end{proof}

The proofs of Lemmas \ref{lem:DA}--\ref{lem:OS} will be given in Section \ref{sec:tails}. 
We close this section with a brief discussion of how to finish the proof of Theorem \ref{th:bbp} using these lemmas in the cases $\d>0$, $\d=0$ and $\d < 0$. 

\subsection{Case \texorpdfstring{$\delta>0$}{}} Substituting the
lemmas from the above subsection for the corresponding lemmas in
\cite{kmLLCRW} and repeating the proof given in this paper we obtain
(\ref{bbptails}) for $\delta>0$, which, in particular, implies that
$P^{V,y}_\eta(\sigma^V_0<\infty)=1$ for all $\delta>0$.

\subsection{Case \texorpdfstring{$\delta=0$}{}}
The proof of (\ref{bbptails}) for $\delta=0$ is identical to the proof
of \cite[Theorem~2.1]{DK14}. All the ingredients which depend on the
specifics of $V$ are contained in the lemmas above. Note again
that while the 2-dimensional squared Bessel process never hits zero,
(\ref{bbptails}) implies that $P^{V,y}_\eta(\sigma^V_0<\infty)=1$ for
$\delta=0$.

\subsection{Case \texorpdfstring{$\delta<0$}{}}\label{ssec:dl0}
All we need to show is that $P^{V,y}_\eta(\sigma^V_0=\infty)>0$ for
every $y\geq 0$.  Notice that by the strong Markov property and
monotonicity of the BLP with respect to the starting point for every
$m>\ell>\max\{\ell_0,\log_2(y+1)\}$
\begin{align*}
  P^{V,y}_\eta(\sigma^V_0=\infty)\ge
P^{V,y}_\eta(\sigma^V_0=\infty, \tau^V_{2^m}<\sigma^V_0)&\ge
P^{V,2^m}_\eta(\sigma^V_0=\infty)P^{V,y}_\eta (\tau^V_{2^m}<\sigma^V_0)\\&\ge
P^{V,2^m}_\eta(\sigma^V_{2^\ell}=\infty)P^{V,y}_\eta (\tau^V_{2^m}<\sigma^V_0).
\end{align*}
By the lower bound of Lemma~\ref{lem:main} and (\ref{asymp}) we can
choose and fix sufficiently large $m>\ell$ so
that 
\[
P^{V,2^m}_\eta(\sigma^V_{2^\ell}=\infty)=\lim_{u\to\infty}P^{V,2^m}_\eta(\sigma^V_{2^\ell}>\tau^V_{2^u})\ge
1-\frac{h^-(m)}{h^-(\ell)}>0.
\] 
Moreover, by the ellipticity of the environment,
$P^{V,y}_\eta (\tau^V_{2^m}<\sigma^V_0)>0$, and we conclude that
$P^{V,y}_\eta(\sigma^V_0=\infty)>0$ for every $y\geq 0$.

\section{Proofs of Lemmas~\ref{lem:DA}-\ref{lem:OS}}\label{sec:tails}

We shall prove a more general diffusion approximation result and then
derive from it Lemma~\ref{lem:DA}.
\begin{lem}[Abstract lemma]\label{al}
Let $b\in \mathbb{R}$, $ D>0$, and $Y(t)$,
$t\ge 0$, be a solution of
\begin{equation}
  \label{SDE}
  dY(t)=b \, dt+\sqrt{ D \, Y(t)^+}dW(t),\quad Y(0)=x>0,
\end{equation}
where $W(t)$, $t\ge 0$, is the standard Brownian motion\footnote{Let us remark
that $X(t):=4Y(t)/ D$ satisfies
$dX(t)=(4b/ D)dt+2\sqrt{X(t)^+}dB(t)$ and, thus, is a squared Bessel
process of the generalized dimension $4b/ D$.}.
Let integer-valued Markov chains $Z_n:=(Z_{n,k})_{k\ge 0}$ satisfy the
following conditions: 
\begin{enumerate}
\item \label{DAasm1} there is a sequence $N_n\in\mathbb{N}$,
$N_n\to\infty$, $N_n=o(n)$ as $n\to\infty$, $f:\mathbb{N}\to [0,\infty),\ f(x)\to 0$ as $x\to
    \infty$, and $g:\mathbb{N}\to [0,\infty),\ g(x)\searrow 0$ as $x\to
    \infty$ such that 
\begin{align*}
  &\mathrm{(E)}\quad |E(Z_{n,1}-Z_{n,0}\,|\,Z_{n,0}=m)-b|\le f(m\vee
  N_n);\\&\mathrm{(V)}\quad
  \Big|\frac{\mathrm{Var}(Z_{n,1}\,|\,Z_{n,0}=m)}{m\vee N_n}- D  \Big| \le g(m\vee N_n);
\end{align*}
 \item \label{DAasm2} for each $T,r>0$ 
\[
E\left(\max_{1\le k\le (Tn)\wedge \tau^{Z_n}_{rn}}(Z_{n,k}-Z_{n,k-1})^2\right)=o(n^2)\ \text{ as $n\to\infty$},
\] 
where $\tau_x^{Z_n} = \inf\{ k\geq 0: Z_{n,k} \geq x \}$. 
  \end{enumerate}
  Set $Z_{n,0}=\floor{nx_n}$, $x_n\to x$ as $n\to\infty$, and
  $Y_n(t)=Z_{n,\floor{nt}}/n$, $t\ge 0$. Then
 $Y_n\overset{J_1}{\Rightarrow} Y$ as $n\to\infty$,
where $\overset{J_1}{\Rightarrow}$ denotes convergence in distribution with respect to the Skorokhod ($J_1$) topology. 
\end{lem}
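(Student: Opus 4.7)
\medskip
\noindent\textbf{Proof plan for Lemma \ref{al}.}

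The plan is to apply the standard martingale-problem approach for convergence of Markov chains to diffusions (in the spirit of Ethier--Kurtz, Theorems 7.4.1 and 8.2.1, or Stroock--Varadhan). The SDE \eqref{SDE} is a one-dimensional equation with drift $b$ and dispersion $\sqrt{D y^+}$. Since the dispersion is H\"older-$1/2$ continuous, the Yamada--Watanabe theorem yields pathwise uniqueness, and weak existence is standard, so the associated martingale problem for the generator $\mathcal{L}f(y) = b f'(y) + \tfrac{D}{2}\, y^+ f''(y)$ is well-posed. Consequently, it suffices to show that the sequence $\{Y_n\}$ is tight in $D([0,\infty),\R)$ and that every subsequential limit solves this martingale problem with initial value $x$.

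For the convergence of the semimartingale characteristics, I would use the Doob decomposition $Z_{n,k} = Z_{n,0} + A_{n,k} + M_{n,k}$, where $A_{n,k}$ is the predictable drift and $M_{n,k}$ is a martingale with predictable quadratic variation $\langle M_n\rangle_k = \sum_{j=1}^{k}\mathrm{Var}(Z_{n,j}\mid \mathcal{F}_{j-1})$. By assumption (E), for any $T>0$,
\begin{equation*}
\sup_{t\le T}\left| \tfrac{1}{n} A_{n,\fl{nt}} - bt \right| \le T\, f(N_n) \longrightarrow 0,
\end{equation*}
since $N_n\to\infty$ and $f(\cdot)\to 0$. By assumption (V), up to the stopping time $\tau_{rn}^{Z_n}$,
\begin{equation*}
\tfrac{1}{n^2}\langle M_n\rangle_{\fl{nt}} = \tfrac{1}{n}\int_0^{\fl{nt}/n}\bigl(Y_n(s)\vee \tfrac{N_n}{n}\bigr)\bigl(D + O(g(N_n))\bigr)\,ds,
\end{equation*}
which converges (along subsequences where $Y_n\Rightarrow Y$) to $D\int_0^t Y(s)\,ds$, using $N_n/n\to 0$.

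Tightness of $Y_n$ in the Skorokhod ($J_1$) topology I would establish via Aldous' criterion. The modulus of continuity on $[0,T]$ up to $\tau^{Z_n}_{rn}$ is controlled by the drift bound above together with Doob's $L^2$ inequality applied to $M_n$, whose total predictable quadratic variation up to $\fl{nT}\wedge \tau^{Z_n}_{rn}$ is $O(n)$. Assumption (\ref{DAasm2}) supplies the Lindeberg-type condition ensuring that no single jump of $Y_n$ is macroscopic on compact time intervals (localized at $\tau^{Z_n}_{rn}$). To remove the localization, a Gronwall-type argument applied to $E[Z_{n,k}\wedge Rn]$ using (E) shows that $P(\tau^{Z_n}_{rn}\le Tn)$ can be made arbitrarily small by choosing $r$ large, giving tightness on all of $[0,T]$.

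To identify any subsequential limit $Y$, I would test against $f\in C_c^\infty(\R)$ and Taylor-expand: using (E), (V), and (\ref{DAasm2}) (for negligibility of the third-order remainder),
\begin{equation*}
f(Y_n(t)) - f(Y_n(0)) - \int_0^t \!\left[b\, f'(Y_n(s)) + \tfrac{D}{2}\bigl(Y_n(s)\vee\tfrac{N_n}{n}\bigr) f''(Y_n(s))\right]ds
\end{equation*}
is an $o(1)$-perturbation of a martingale (in $L^1$, on compact time intervals). Passing to the limit, the limit solves the martingale problem for $\mathcal{L}$; by well-posedness, $Y_n\Rightarrow Y$.

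The main obstacle is the degeneracy of the diffusion coefficient at $0$, combined with the fact that the approximation conditions (E), (V) only hold in terms of $m\vee N_n$. Because $N_n/n\to 0$, the ``bad region'' $\{Z_{n,k}\le N_n\}$ corresponds to $\{Y_n\le N_n/n\}$, a set shrinking to $\{Y=0\}$, where the limiting diffusion coefficient also vanishes; so the contribution of this region to both drift and quadratic variation vanishes in the limit. Handling the limit \emph{through} the origin (which the limit process may touch when $4b/D<2$) without losing tightness requires care: one should work up to $\tau^{Y_n}_{rn}$ and exploit monotonicity in the starting point together with the fact that the limiting SDE is well-posed even when $Y$ hits $0$, so the stopping localization can be removed in the end.
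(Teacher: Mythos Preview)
Your approach is correct and follows essentially the same strategy as the paper: both reduce to the well-posed martingale problem for $\mathcal{L}f = bf' + \tfrac{D}{2}y^+ f''$ and verify convergence of the predictable drift and quadratic variation of $Y_n$. The execution differs, however. The paper applies Ethier--Kurtz Chapter~7, Theorem~4.1 (p.~354) directly, defining $B_n(t)=\tfrac1n\sum_{k\le\lfloor nt\rfloor}E[\Delta Z_{n,k}\mid\mathcal F_{k-1}]$ and $A_n(t)=\tfrac1{n^2}\sum_{k\le\lfloor nt\rfloor}\mathrm{Var}(Z_{n,k}\mid\mathcal F_{k-1})$ and checking the five conditions (4.3)--(4.7) of that theorem, all stated up to $T\wedge\tau_r^{Y_n}$; the theorem then handles tightness and the removal of the localization internally. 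Your plan instead proves tightness by hand via Aldous' criterion and separately argues $P(\tau^{Z_n}_{rn}\le Tn)\to 0$ as $r\to\infty$ to delocalize---valid, but more work. Two minor points: your drift bound should be $T\sup_{m\ge N_n}f(m)$ rather than $Tf(N_n)$, since $f$ is not assumed monotone; and the paper's quadratic-variation condition is $\sup_{t\le T\wedge\tau_r^{Y_n}}\bigl|A_n(t)-D\int_0^t Y_n^+(s)\,ds\bigr|\to 0$ (with $Y_n$, not the limit $Y$), which avoids the apparent circularity of invoking subsequential convergence of $Y_n$ inside the condition you are trying to verify.
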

\begin{proof}
The proof is based on \cite[Theorem 4.1,\ p.\,354]{ekMP}. We need to
check the well-posedness of the martingale problem for
\[A=\left\{\left(f,Gf=\frac{D}{2}\,x^+\,\frac{\partial^2}{\partial
      x^2}+b\,\frac{\partial}{\partial x}\right):\,f\in
  C_c^\infty(\R)\right\}\]
on $C_\R[0,\infty)$ and conditions (4.1)-(4.7) of that theorem. The
well-posedness of the martingale problem follows from \cite[Corollary
3.4, p.\,295]{ekMP} and the fact that the existence and distributional
uniqueness hold for solutions of (\ref{SDE}) with arbitrary initial
distributions.\footnote{A more detailed discussion of (\ref{SDE}) can
  be found immediately following (3.1) in \cite{kzEERW}.}  Thus we
only need to check the conditions of the theorem in \cite{ekMP}. 

To this end, define processes $A_n(t)$ and $B_n(t)$ by 
\[ A_n(t):
  =\frac1{n^2}\sum_{k=1}^{\floor{nt}}\mathrm{Var}(Z_{n,k}\,|\,Z_{n,k-1});\quad 
  B_n(t):= \frac1n\sum_{k=1}^{\floor{nt}}E(Z_{n,k}-Z_{n,k-1}\,|\,Z_{n,k-1}).
\]
It is elementary to check that the processes
  $M_n(t):=Y_n(t)-B_n(t)$ and $M_n^2(t)-A_n(t)$, $t\ge 0$, are
  martingales for all $n\in\mathbb{N}$. 
Therefore, it is sufficient to check the following five conditions for any $T,r > 0$. 
\begin{align}\label{EKc3}
 &\lim_{n\ra\infty} E\left[ \sup_{t\leq T \wedge \tau_r^{Y_n}} \left| Y_n(t) - Y_n(t-) \right|^2 \right]  =0. 
\\
\label{EKc4}
  &\lim_{n\ra\infty} E\left[ \sup_{t\leq T \wedge \tau_r^{Y_n}} \left| B_n(t) - B_n(t-) \right|^2 \right]  =0. 
\\
\label{EKc5}
  &\lim_{n\ra\infty} E\left[ \sup_{t\leq T \wedge \tau_r^{Y_n}} \left| A_n(t) - A_n(t-) \right| \right]  =0. 
\\
\label{EKc6}
 &\lim_{n\ra \infty} \sup_{t \leq T \wedge \tau_r^{Y_n}} \left| B_n(t) - b t \right| = 0, \quad P\text{-a.s.}
\\
\label{EKc7}
  &\lim_{n\ra \infty} \sup_{t \leq T \wedge \tau_r^{Y_n}} \left| A_n(t) - D \int_0^t Y_n^+(s) \, ds  \right| = 0, \quad P\text{-a.s.}
\end{align}
Note, in the above conditions that  $\tau_r^{Y_n} = \inf\{t \geq 0: Y_n(t) \geq r \} = \tau_{rn}^{Z_n}/n$.  

Condition \eqref{EKc3} is simply a restatement of condition \ref{DAasm2} in the statement of Lemma \ref{lem:DA}. 
Conditions \eqref{EKc4} and \eqref{EKc5} follow from conditions \ref{DAasm1}(E) and \ref{DAasm1}(V), respectively. Indeed, 
\begin{align*}
   \lim_{n\ra\infty} E\left[ \sup_{t\leq T \wedge \tau_r^{Y_n}} \left| B_n(t) - B_n(t-) \right|^2 \right]
&= \lim_{n\ra\infty} \frac{1}{n^2} E\left[ \max_{1\leq k \leq (Tn) \wedge \tau_{rn}^{Z_n}} \left( E[ Z_{n,k} - Z_{n,k-1} \, | \, Z_{n,k-1}] \right)^2 \right] \\
&\overset{\text{(i)(E)}}{\leq} \lim_{n\ra\infty} \frac{(b+ \|f\|_\infty )^2}{n^2} = 0,
\end{align*}
and similarly
\begin{align*}
 \lim_{n\ra\infty} E\left[ \sup_{t\leq T \wedge \tau_r^{Y_n}} \left| A_n(t) - A_n(t-) \right| \right]  
&= \lim_{n\ra\infty} \frac{1}{n^2} E\left[ \max_{1\leq k \leq (Tn)\vee \tau_{rn}^{Z_n}} \Var(Z_{n,k} \, | \, Z_{n,k-1}) \right] \\
&\overset{\text{(i)(V)}}{\leq} \lim_{n\ra\infty} \frac{ (nr) \left( D + \|g\|_\infty \right) }{n^2} = 0, 
\end{align*}
where in the last inequality we used that $Z_{n,k-1} \leq rn$ for $k\leq \tau_{rn}^{Z_n}$ and that $N_n < rn$ for $n$ large enough. 
To check condition \eqref{EKc6}, note that 
\begin{align*}
 \sup_{t \leq T \wedge \tau_r^{Y_n}} \left| B_n(t) - bt \right| 
&\leq \frac{|b|}{n} + \sup_{1\leq k \leq (Tn) \wedge \tau_{rn}^{Z_n}} \frac{1}{n} \sum_{j=1}^k \left| E\left[ Z_{n,j} - Z_{n,j-1} \, | \, Z_{n,j-1} \right] - b \right| \\
&\leq \frac{|b|}{n} + \frac{1}{n} \sum_{j=1}^{(Tn) \wedge \tau_{rn}^{Z_n}} f(Z_{n,j-1} \vee N_n ) 
\leq \frac{|b|}{n} + T \left\{ \sup_{m \in [N_n,rn]} f(m) \right\}. 
\end{align*}
Since this final upper bound is deterministic and vanishes as $n\ra\infty$, we have shown that condition \eqref{EKc6} holds. 
Finally, to check condition \eqref{EKc7} note that 
\begin{align}
\left| A_n(t)\right. &-\left. D \int_0^t Y_n^+(s) \, ds  \right|
= \left| \frac{1}{n^2} \sum_{k=1}^{\fl{tn}} \Var(Z_{n,k} \, | \, Z_{n,k-1}) - \frac{D}{n^2} \sum_{k=1}^{\fl{tn}} Z_{n,k-1}^+ - \frac{D}{n}\left( t - \frac{\fl{tn}}{n} \right) Z_{n,\fl{tn}}^+ \right| \nonumber \\
&\leq \frac{1}{n^2} \sum_{\substack{1\leq k \leq \fl{tn}:\\ Z_{n,k-1} > N_n}} \left| \Var(Z_{n,k} \, | \, Z_{n,k-1}) - D Z_{n,k-1} \right| \nonumber \\
&\makebox[2.5cm]{\ } + \frac{1}{n^2} \sum_{\substack{1\le k\le \floor{tn}:\\Z_{n,k-1}\le N_n}} \left\{ \Var(Z_{n,k} \, | \, Z_{n,k-1}) + D Z_{n,k-1}^+ \right\}  
 +  \frac{D}{n}\left( t - \frac{\fl{tn}}{n} \right) Z_{n,\fl{tn}}^+ \nonumber \\
&\leq  \frac{1}{n^2} \sum_{\substack{1\leq k \leq \fl{tn}:\\ Z_{n,k-1} > N_n}} g(N_n) Z_{n,k-1} 
+ \frac{t N_n(2D + g(N_n))}{n} +  \frac{D}{n}\left( t - \frac{\fl{tn}}{n} \right) Z_{n,\fl{tn}}^+, \label{EKc7ub}
\end{align}
where the last inequality follows from condition \ref{DAasm1}(V) and the assumption that $g$ is non-increasing. 
Now if $t \leq \tau_r^{Y_n}$ (equivalently, $tn \leq \tau_{rn}^{Z_n}$) then $Z_{n,k-1} < rn$ for all $k\leq \fl{tn}$, and so the first sum in \eqref{EKc7ub} is at most $r\, t\, n^2 g(N_n)$. 
As for the last term in \eqref{EKc7ub}, if $t < \tau_r^{Y_n}$ then $Z_{n,\fl{tn}} < rn$ whereas if $t=\tau_r^{Y_n}$ then $tn$ is an integer and  the last term in \eqref{EKc7ub} is zero. Thus, we conclude that 
\[
 \sup_{t \leq T \wedge \tau_r^{Y_n}} \left| A_n(t) - D \int_0^t Y_n^+(s) \, ds  \right|
\leq r T g(N_n) + \frac{T N_n(2D + g(N_n))}{n} + \frac{Dr}{n}
\xrightarrow[n\ra\infty]{} 0. 
\]
This completes the proof of condition \eqref{EKc7} and thus also the proof of the lemma. 
\end{proof}
\begin{proof}[Proof of Lemma~\ref{lem:DA}]
  The diffusion approximation is obtained from Lemma~\ref{al} in two
  steps: (1) construct a modified process $\Bar{V}$
  for which it is easy to check the conditions of Lemma~\ref{al} and
  conclude the convergence to the diffusion for all times; (2) couple
  the original process  to the modified one so that they coincide up
  to the first time the processes enter $(-\infty,N_n]$. Since $N_n = o(n)$, this 
  gives the diffusion approximation up to the first entrance time to
  $(-\infty,n\epsilon]$ for every $\epsilon>0$ as claimed.

 Note  that the backward BLP $V_k$ can be written as
  $V_k = \sum_{m=1}^{V_{k-1}+1} \tilde{G}_m^k$, where
  $\tilde{G}_m^x = F_m^x - F_{m-1}^x$ is the number of failures
  between the $(m-1)$-st and $m$-th success in the sequence of
  Bernoulli trials $\{\xi_x(j)\}_{j\geq 1}$.  It follows that
\begin{equation}\label{Vkbpform}
 V_{k} = \sum_{m=1}^{V_{k-1}+1} \tilde{G}_m^{k} = V_{k-1} + 1 + \sum_{m=1}^{V_{k-1}+1} \left( \tilde{G}_m^{k} - 1 \right), \qquad k\geq 1. 
\end{equation}
We now construct the family of modified processes $\bar{V}_n = \{\bar{V}_{n,k} \}_{k\geq 0}$ as follows. 
Fix any sequence $N_n\in\mathbb{N}$ such that $2\le N_n\to\infty$
  and $N_n=o(n)$ as $n\to\infty$.  For all sufficiently large $n$ (we
  want $N_n\ll ny_n$) set
\begin{equation*}
  \Bar{V}_{n,0}=\floor{ny_n}, \quad\text{and}\quad \Bar{V}_{n,k}:=\Bar{V}_{n,k-1}+1+\sum_{j=1}^{(\Bar{V}_{n,k-1}+1)\vee N_n}(\tilde{G}_j^{k}-1),\quad k\ge 1. 
\end{equation*}
Note that the modified process $\bar{V}_n$ is naturally coupled with the BLP $V$ since we use the same sequence of Bernoulli trials in the construction of both. In particular, if we start with $\bar{V}_{n,0} = \fl{n y_n} > \e n > N_n$ then the two process are identical up until exiting $[\e n,\infty)$. 

It remains now to check the conditions of Lemma \ref{al} for the family of modified processes $\bar{V}_n$. 

\noindent
{\em Parameters and condition (i).} By Propositions~\ref{pr:Vmean} and
\ref{Vvarerror}, condition (i)(E) holds for $Z_n=\Bar{V}_n$ with
$b=1+\eta\cdot\tilde{\br}$ and $f(x)=\Cr{nl}e^{-\Cr{nu} x}$, and
condition (i)(V) holds with $D=\nu$ and $g(x)=\Cr{w}/x$. 

\noindent
{\em Condition (ii).} Fix $T,r>0$. We need to show
that
\[
\lim_{n\to\infty}\frac{1}{n^2}E_\eta\left[\max_{1\le k\le (Tn)\wedge
    \tau_{rn}^{\bar{V}_n}}\Big|1+\sum_{j=1}^{(\Bar{V}_{n,k-1}+1)\vee
    N_n}(\tilde{G}_j^{k}-1)\Big|^2\right]=0,
\] 
where $\tau_{rn}^{\bar{V}_n} = \inf\{k: \bar{V}_{n,k} \geq rn \}$. 
To see this, note that if $n$ is large enough so that $N_n < r n$ then 
\begin{align*}
  \frac{1}{n^2}E_\eta \left(\max_{1\le k\le (Tn)\wedge
      \tau_{rn}^{\bar{V}_n}}\right. &\left. \Big|\sum_{j=1}^{(\Bar{V}_{n,k-1}+1)\vee
      N_n}(\tilde{G}_j^{k}-1)\Big|^2\right)\le
  \frac{1}{n^2}E_\eta\left(\max_{1\le k\le (Tn)} \max_{N_n\le m\le
      rn+1}\Big|\sum_{j=1}^m(\tilde{G}_j^{k}-1)\Big|^2\right)\\ &\le
  \frac{1}{n^2} \sum_{y=0}^\infty P_\eta\left(\max_{1\le k\le (Tn)}
    \max_{N_n\le m\le
      rn+1}\Big|\sum_{j=1}^m(\tilde{G}_j^{k}-1)\Big|^2>y\right) 
\\&\le \frac{r^{3/2}}{\sqrt{n}}
+ r T \sum_{y\ge (rn)^{3/2}} P_\eta\left( \Big|\sum_{j=1}^m(\tilde{G}_j^{k}-1)\Big|>\sqrt{y}\right).     
  \end{align*}
  Finally we apply Lemma~\ref{lem:dev} to get that the expression in
  the last line does not exceed
\begin{align*}
  &\frac{r^{3/2}}{\sqrt{n}}+r T\sum_{y\ge (rn)^{3/2}} C
  \left(\exp\left\{-C' \left(\frac{y}{\sqrt{y}\vee
          (8rn)}\right)\right\} + \exp\left\{-C'\sqrt{y}\right\}\right)\\ \le
  &\frac{r^{3/2}}{\sqrt{n}}+r T\sum_{y\ge (rn)^{3/2}} C
  \left(\exp\left\{-C' \left(\frac{y}{\sqrt{y}\vee
          (8y^{2/3})}\right)\right\} + \exp\left\{-C'\sqrt{y}
    \right\}\right)\to 0\ \text{as } n\to\infty.
\end{align*}
By Lemma~\ref{al} we conclude 
that the processes $t\mapsto \bar{V}_{n,\fl{nt}}/n$ with initial conditions $\bar{V}_{n,0} = \fl{n y_n}$ and $y_n \ra y$ converge in distribution as $n\ra\infty$ to the process $Y$ defined by \eqref{SqB2}. 
As noted above, since $\bar{V}_n$ can be coupled with $V$ until time $\tau_{\e n}^{V}$, this gives us the desired result. 
\end{proof}
 Next we give the proof of the overshoot estimates in Lemma \ref{lem:OS} since they are needed for the proof of Lemma \ref{lem:exit}. 

\begin{proof}[Proof of Lemma \ref{lem:OS}]
The proof follows closely the arguments in the proof of Lemma 5.1 in \cite{kmLLCRW}. 
We begin by noting that 
\[
 \max_{0\leq z<x} P_\eta^{V,z}(V_{\tau_x^V} > x+y |\, \tau_x^V < \s_0^V )
\leq \max_{0\leq z < x} \frac{P_\eta^{V,z}( V_1 > x+y)}{P_\eta^{V,z}(V_1 \geq x)}. 
\]
Since the (averaged) distribution of the sequence $\{\tilde{G}_j^x\}_{j\geq 1}$ defined above doesn't depend on $x\in\Z$ we will let $\{\tilde{G}_j\}_{j\geq 1}$ denote a sequence with this same distribution. It follows from \eqref{Vkbpform} that 
\begin{align}
  \max_{0\leq z<x} P_\eta^{V,z}(V_{\tau_x^V} > x+y |\, \tau_x^V < \s_0^V )
&\leq \max_{0\leq z < x} \frac{ P_{\eta}\left( z+1 + \sum_{j=1}^{z+1} ( \tilde{G}_j - 1) > x+y \right) }{P_{\eta}\left( z+1 + \sum_{j=1}^{z+1} ( \tilde{G}_j - 1) \geq x \right)} \nonumber \\
&= \max_{0\leq m < x} \frac{ P_{\eta}\left(\sum_{j=1}^{x-m}  ( \tilde{G}_j - 1) > y+m \right) }{ P_{\eta}\left( \sum_{j=1}^{x-m}  ( \tilde{G}_j - 1) \geq m \right) }, \label{tGLDfrac}
\end{align}
where the last equality follows from the substitution $m=x-z-1$. 
We therefore need to give a lower bound on the denominator and an upper bound on the numerator of \eqref{tGLDfrac}. For the lower bound on the denominator we will use the following lemma.
\begin{lem}\label{lem:tGsum}
 \[
  \inf_{n\geq 1} \min_{i \in \mathcal{R}} P_i \left( \sum_{j=1}^n (\tilde{G}_j - 1) \geq 0 \right) > 0. 
 \]
\end{lem}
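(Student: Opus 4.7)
The plan is to reduce the claim to the central limit theorem for additive functionals of a finite-state Markov chain. First I would note that in the critical case $\bar{p}=1/2$, the symmetric analogue of Lemma~\ref{lem:Pipi} (with the roles of successes and failures interchanged) shows that the sequence $\{\tilde{I}_m\}_{m\ge 0}$ recording the environment just after the $m$-th success in $\{\xi(j)\}_{j\ge 1}$ is an aperiodic Markov chain with unique closed irreducible class $\mathcal{R}_0$ and stationary distribution $\tilde{\pi}=2\mu D_p K$, and that the stationary mean of $\tilde{G}_1$ equals $(1-\bar{p})/\bar{p}=1$. Consequently, setting $S_n:=\sum_{j=1}^n (\tilde{G}_j - 1)$, the function $h(i):=E_i[\tilde{G}_1]-1$ is centered under $\tilde{\pi}$ and $S_n$ is a centered additive functional of $\{\tilde{I}_m\}$.

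By comparison with a geometric random variable of parameter $1-p_{\text{max}}$, where $p_{\text{max}}=\max_{i\in\mathcal{R}} p(i)<1$, one has $\sup_{i \in \mathcal{R}} E_i[\tilde{G}_1^2]<\infty$. Together with the exponentially fast convergence of $\tilde{I}_m$ to $\tilde{\pi}$ (analogous to \eqref{pierror}), this places us in the classical setting of the Markov-chain CLT, \cite[I.16, Theorem 1]{cklMC}, which yields $S_n/\sqrt{n} \limd \mathcal{N}(0,\tilde{\nu})$ under $P_i$ for every $i\in\mathcal{R}$, where $\tilde{\nu}>0$ is the asymptotic variance of Proposition~\ref{Vvarerror}. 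Since $\mathcal{R}$ is finite and $0$ is a continuity point of the limit distribution, $\min_{i\in\mathcal{R}} P_i(S_n\ge 0)\to 1/2$; in particular, there exists $N_0\in\mathbb{N}$ such that $P_i(S_n\ge 0)\ge 1/4$ for every $n\ge N_0$ and every $i\in\mathcal{R}$.

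It remains to handle $1\le n<N_0$. For any such $n$ and any $i\in\mathcal{R}$, ellipticity gives a positive lower bound: on the event $\{\xi(1)=\xi(2)=\cdots=\xi(n)=0\}$, which has $P_i$-probability at least $(1-p_{\text{max}})^n>0$ regardless of the trajectory of the underlying environment Markov chain, we have $\tilde{G}_1\ge n$ and $\tilde{G}_j\ge 0$ for every $j\ge 2$, hence $S_n\ge 0$. Taking the minimum over the finite set $\{(i,n):\,i\in\mathcal{R},\ 1\le n<N_0\}$ gives a positive constant $c_0$, and the lemma follows with lower bound $\min(c_0,1/4)$. The only nontrivial step is the application of the Markov-chain CLT, which is standard given the exponential mixing and finite variance, so I do not anticipate any real obstacle.
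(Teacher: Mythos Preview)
Your approach is correct but differs from the paper's. The paper first rewrites the event combinatorially: $\sum_{j=1}^n(\tilde{G}_j-1)\ge 0$ holds iff there are at least $n$ failures among the first $2n-1$ Bernoulli trials, i.e., $\sum_{j=1}^{2n-1}(\xi(j)-\tfrac12)\le 0$; it then applies the CLT for partial sums of $\xi(j)-\tfrac12$ (Lemma~\ref{clts}, proved via regeneration on the chain $\{R_j\}$), so the probability tends to $1/2$ for each $i\in\mathcal{R}$. Your route---applying a Markov-chain CLT directly to $S_n=\sum_{j=1}^n(\tilde{G}_j-1)$---also works, but two points deserve care. First, $\tilde{G}_j-1$ is not literally of the form $h(\tilde{I}_{j-1})$ (it carries extra coin-flip randomness), so \cite[I.16, Theorem~1]{cklMC} for additive functionals of a finite-state chain does not apply verbatim; you need either a regeneration argument on $\{\tilde{I}_m\}$ (the same device behind Lemma~\ref{clts}) or to pass to the enlarged chain $\{(\tilde{I}_{m-1},\tilde{G}_m)\}_{m\ge 1}$ and verify the hypotheses there. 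Second, Proposition~\ref{Vvarerror} gives only the formula for $\tilde{\nu}$, not its strict positivity; the latter follows from ellipticity since, conditional on $\tilde{I}_{j-1}$, the variable $\tilde{G}_j$ has positive variance, forcing $\Var(S_n)$ to grow at least linearly. The paper's combinatorial rewriting buys a CLT for bounded summands on the original chain $\{R_j\}$, sidestepping both issues. Your explicit treatment of small $n$ via the event $\{\xi(1)=\cdots=\xi(n)=0\}$ is correct and makes explicit a step the paper leaves implicit.
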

\begin{proof}
The event $\{ \sum_{j=1}^n (\tilde{G}_j - 1) \geq 0 \}$ occurs when there are at least $n$ failures by the time of the $n$-th success in the sequence of Bernoulli trials. This occurs if and only if there are at least $n$ failures in the first $2n-1$ trials, or equivalently less than $n$ successes in the first $2n-1$ trials. That is,  
 \[
  P_i \left( \sum_{j=1}^n (\tilde{G}_j - 1) \geq 0 \right) = P_i\left( \sum_{j=1}^{2n-1}  \xi(j) < n \right) 
= P_i\left( \sum_{j=1}^{2n-1}  \left( \xi(j) - \frac{1}{2} \right) \leq 0 \right).
 \]
The last probability converges to $1/2$ for each $i \in \mathcal{R}$ by Lemma~\ref{clts}.
\end{proof}

Next, we show how to use Lemma \ref{lem:tGsum} to obtain a lower bound on the denominator on the right side of \eqref{tGLDfrac}. 
For any $m\geq 1$ let $\rho_m = \min\{ n\geq 1: \, \sum_{j=1}^n (\tilde{G}_j - 1) \geq m \}$. 
Then, clearly
\begin{align}
 P_{\eta}\left( \sum_{j=1}^{x-m}  ( \tilde{G}_j - 1) \geq m \right) 
&= \sum_{n=1}^{x-m} P_{\eta}\left( \sum_{j=1}^{x-m}  ( \tilde{G}_j - 1) \geq m, \, \rho_m = n \right) \nonumber \\
&\geq \sum_{n=1}^{x-m} P_{\eta}\left( \sum_{j=n+1}^{x-m}  ( \tilde{G}_j - 1) \geq 0, \, \rho_m = n \right) 
\geq C P_{\eta} (\rho_m \leq x-m),  \label{OSdlb}
\end{align}
where the constant $C>0$ in the last line comes from Lemma
\ref{lem:tGsum}. 

For the numerator in \eqref{tGLDfrac}, 
\begin{align}
& P_{\eta}\left(\sum_{j=1}^{x-m}  ( \tilde{G}_j - 1) > y+m \right) \nonumber \\
&\leq \sum_{n=1}^{x-m}  P_{\eta}\left( \sum_{j=1}^n  ( \tilde{G}_j - 1) > m + \frac{y}{2}, \, \rho_m = n \right)  + \sum_{n=1}^{x-m} P_{\eta}\left( \sum_{j=n+1}^{x-m}  ( \tilde{G}_j - 1) > \frac{y}{2}, \, \rho_m = n \right)\nonumber \\
&\leq \sum_{n=1}^{x-m} \left\{ \max_{i \in \mathcal{R}} \max_{\ell < m}  P_i \left( \tilde{G}_1-1 > m-\ell + \frac{y}{2}  \, \biggl| \, \tilde{G}_1 - 1 \geq m-\ell \right) \right\} P_{\eta} ( \rho_m = n ) \nonumber \\
&\makebox[5cm]{\ } + \sum_{n=1}^{x-m} P_{\eta}(\rho_m=n) \left\{ \max_{i \in \mathcal{R}} \max_{k < x} P_i \left( \sum_{j=1}^{k} (\tilde{G}_j - 1) > \frac{y}{2} \right) \right\} \nonumber \\
&\leq \left\{ p_{\text{max}}^{\fl{y/2}} + \max_{i \in \mathcal{R}} \max_{k < x} P_i \left( \sum_{j=1}^{k} (\tilde{G}_j - 1) > \frac{y}{2} \right)  \right\} P_{\eta}(\rho_m \leq x-m), \label{OSnub}
\end{align}
where $p_{\text{max}} = \max_{i \in \mathcal{R}} p(i) < 1$.  Applying
\eqref{OSdlb} and \eqref{OSnub} to \eqref{tGLDfrac}, and using the
concentration
bounds in Lemma~\ref{lem:dev} finishes the proof
of the first part of Lemma \ref{lem:OS}.
The proof of the second inequality in Lemma \ref{lem:OS} is essentially the same and is therefore omitted. 
\end{proof}

\begin{proof}[Proof of Lemma~\ref{lem:exit}] 
As noted prior to the statement of Lemma \ref{lem:exit}, if $Y(t)$ is the process in \eqref{SqB2} that arises as the scaling limit of the backward BLP, then $Y^\d(t)$ (or $\log Y(t)$ when $\d=0$) is a martingale. The proof of Lemma \ref{lem:exit} is then accomplished by showing that $V_k^\d$ (or $\log V_k$ when $\d=0$) is nearly a martingale prior to exiting the interval $(a^{j-1},a_j)$. 
The proof follows essentially the same approach as in \cite[proof of Lemma 5.2(ii), pp.\ 598-599]{kmLLCRW}. 
To this end, let $s\in C^\infty([0,\infty))$ have
  compact support, and satisfy
\[
 s(x) = \begin{cases} x^\d & \d \neq 0 \\ \log x & \d = 0 \end{cases}
\qquad \text{for all } x \in \left( \frac{2}{3a}, \frac{3a}{2} \right).
\]
Define $\mathcal{H}_k:= \s( V_i: \, i\leq k )$. Then, a Taylor expansion for $s(x)$ yields that on the event $\{\gamma > k\}$
\begin{align*}
&E_\eta^{V,x}\left[ s\left( \frac{V_{k+1}}{a^j} \right) \biggl| \, \mathcal{H}_k \right] \\
&= s\left( \frac{V_{k}}{a^j} \right)
+ \frac{s'\left( \frac{V_{k}}{a^j} \right)}{a^j} E_\eta^{V,x}\left[ V_{k+1}-V_k \bigl| \, \mathcal{H}_k \right] 
+  \frac{s''\left( \frac{V_{k}}{a^j} \right)}{2 a^{2j}}E_\eta^{V,x}\left[(V_{k+1}-V_k)^2 \bigl| \, \mathcal{H}_k \right] + \e_{j,k}, 
\end{align*}
where the term $\e_{j,k}$ comes from the error in the Taylor expansion and can thus be bounded by 
\begin{equation}\label{ekjub}
 \e_{j,k} 
\leq \frac{\|s'''\|_\infty}{6 a^{3j}} E_\eta^{V,x}\left[ (V_{k+1}-V_k)^4 \biggl| \, \mathcal{H}_k \right]^{3/4}
\leq \frac{ \|s'''\|_\infty}{6 a^{3j}} (A V_k^2)^{3/4}
\leq C a^{-3j/2},
\end{equation}
where the second-to-last inequality follows from the fourth moment bound in Lemma \ref{lem:4th} and the last inequality follows from the fact that $V_k < a^{(j+1)}$ on the event $\{\gamma > k\}$. 
Next, it follows from Propositions \ref{pr:Vmean} and \ref{Vvarerror} that on the event $\{\gamma > k\}$
\begin{align}\label{Vkdiff}
 &\left| E_\eta^{V,x}\left[ V_{k+1}-V_k \bigl| \, \mathcal{H}_k \right] - (1+\eta \cdot \tilde{\mathbf{r}}) \right| \leq \Cr{nl} e^{-\Cr{nu} V_k} 
\leq \Cr{nl} e^{-\Cr{nu} a^{j-1}}
\\\intertext{and }
\label{Vksqdiff}
&\left| E_\eta^{V,x}\left[(V_{k+1}-V_k)^2 \bigl| \, \mathcal{H}_k \right] - \tilde{\nu} V_k \right|
\leq C''. 
\end{align}

Combining \eqref{ekjub}, \eqref{Vkdiff}, and \eqref{Vksqdiff} we see that
\begin{align}
  E_\eta^{V,x}\left[ s\left( \frac{V_{k+1}}{a^j} \right) \biggl| \, \mathcal{H}_k \right]
&= s\left( \frac{V_{k}}{a^j} \right)
+ \frac{1}{a^j} \left\{ (1+\eta \cdot \mathbf{\tilde{r}}) s'\left( \frac{V_{k}}{a^j} \right) 
+  \frac{\tilde{\nu}}{2}\frac{V_k}{a^{j}}   s''\left( \frac{V_{k}}{a^j} \right)  \right\} + R_{j,k}, \label{mgapprox}
\end{align}
where on the event $\{\gamma > k\}$ the error term $R_{j,k}$ is such that
\begin{equation}\label{Rjkub}
 |R_{j,k}| \leq C a^{-3j/2} + C a^{-j} e^{-C' a^j} + C a^{-2j} \leq C'' a^{-3j/2}. 
\end{equation}
Now, it follows from \eqref{Bsqdim} and the fact that $s(x) = x^\d$ in $[1/a,a]$ that 
\[
 (1+\eta \cdot \mathbf{\tilde{r}}) s'(x) + \frac{\tilde{\nu}}{2} x s''(x) 
= \frac{\tilde{\nu}}{2} \left\{ (1-\d) s'(x) +  x s''(x) \right\} 
= 0, \quad \forall x \in [1/a,a]. 
\]
Therefore, the quantity inside the braces in \eqref{mgapprox} vanishes
on the event $\{\gamma > k\}$ and so we can conclude that
$s( \frac{V_{n \wedge \gamma}}{a^j}) - \sum_{k=0}^{(n \wedge
  \gamma)-1} R_{j,n}$
is a martingale with respect to the filtration $\mathcal{H}_n$.  From
this point, the remainder of the proof is the same as that of Lemma 5.2 in \cite{kmLLCRW}.
We will  give a sketch and refer the reader to
\cite{kmLLCRW} for details.  First, the diffusion approximation in
Lemma \ref{lem:DA} can be used to show that
$E_\eta^{V,x}[ \gamma] \leq C a^j$ and thus it follows from the
optional stopping theorem and \eqref{Rjkub} that
\begin{equation}\label{Mgerror}
 \left|E_\eta^{V,x}\left[s\left( \frac{V_{\gamma}}{a^j} \right)\right] - s(x/a^j) \right|\leq E_\eta^{V,x}\left[ \left| \sum_{k=0}^{\gamma-1} R_{j,k} \right| \right] \leq C a^{-j/2}. 
\end{equation}
Next, the over/under-shoot estimates in Lemma \ref{lem:OS} are sufficient to show that 
\begin{equation}\label{Eserror}
 \left| E_\eta^{V,x}\left[ s\left( \frac{V_\gamma}{a^j} \right) \right] - P_\eta^{V,x}( V_\gamma \leq a^{j-1} ) a^{-\d} - P_\eta^{V,x}( V_\gamma \geq a^{j+1} ) a^{\d} \right| \leq C a^{-j/3}. 
\end{equation}
Finally, since $|x-a^j| \leq a^{2j/3}$, it follows that $|s(\frac{x}{a^j}) - 1| \leq C a^{-j/3}$. From this and the estimates in \eqref{Mgerror}--\eqref{Eserror} the conclusion of Lemma \ref{lem:exit} follows.  
\end{proof}

\appendix

\section{Appendix}

In this appendix we prove several technical estimates which are used in the analysis of the BLP in Section \ref{sec:tails}. We begin with the following concentration bound.


\begin{lem}\label{lem:dev}
 Let $\bar{p}=1/2$. There exist constants $C,C'>0$ and
  $y_0 < \infty$ such that for all $n\ge 1$, $y\ge y_0$, and for any
  initial distribution $\eta$ for the environment Markov chain we have
\[
 P_{\eta}\left(\left\vert \sum_{j=1}^n (\tilde{G}_j - 1)\right\vert > y \right) \leq C \left(\exp\left\{-C' \left(\frac{ y^2}{y\vee 8n}\right)\right\} + \exp\left\{-C'y  \right\}\right).
\] 
\end{lem}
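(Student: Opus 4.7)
The plan is to reduce the question to a concentration estimate for the Bernoulli partial sums $S_N := \sum_{j=1}^N (\xi(j) - \tfrac12)$ and then prove the latter via Hoeffding's inequality for the coin flips combined with a Poisson-equation/Azuma--Hoeffding argument for the environment Markov chain $\{R_j\}$.

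\textbf{Reduction.} The key observation is that $\sum_{j=1}^n \tilde G_j = F_n$ is simply the number of failures before the $n$-th success, so if $T_n^s$ denotes the trial index of the $n$-th success in the Bernoulli sequence $\{\xi(j)\}_{j \ge 1}$ then $F_n = T_n^s - n$. Consequently, both $\{F_n > n + y\}$ and $\{F_n < n - y\}$ can be rewritten as events of the form $\{|S_N| \ge (y - O(1))/2\}$ for some $N \le 2n + y + 1$ (with $N \ge n - 1$ whenever the event is nonempty). The lemma will then follow once we have a tail bound of the form $P_\eta(|S_N| > z) \le C e^{-C' z^2/N}$ valid for all $z$ above a fixed constant and all $N \ge 1$: choosing $z \asymp y/2$, the regimes $y \le 8n$ and $y > 8n$ produce, respectively, the Gaussian and exponential terms in the statement of the lemma, and any constants appearing from boundary terms are swallowed by taking $y_0$ large enough, together with the additive $\exp\{-C'y\}$ term.

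\textbf{Bernoulli concentration.} I would decompose
$$S_N = \sum_{j=1}^N \bigl(\xi(j) - p(R_j)\bigr) + \sum_{j=1}^N \bigl(p(R_j) - \tfrac12\bigr) =: M_N + A_N.$$
Conditional on the full sequence $\{R_j\}$, the summands of $M_N$ are independent, centered, and bounded by $1$, so Hoeffding's inequality gives $P_\eta(|M_N| > z/2 \mid R) \le 2 e^{-z^2/(2N)}$ almost surely, and hence unconditionally. For $A_N$, since $\bar p = 1/2$ the vector $\mathbf{p} - \tfrac12 \mathbf{1}$ is orthogonal to $\mu$, so the Poisson equation $(I-K)h = \mathbf{p} - \tfrac12 \mathbf{1}$ admits a bounded solution $h$, obtained, exactly as in the derivation of \eqref{rZn}, as the exponentially convergent series $\sum_{k \ge 0} K^k(\mathbf{p} - \tfrac12 \mathbf{1})$. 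This yields the telescoping identity
$$A_N = h(R_1) - h(R_{N+1}) + \sum_{j=1}^N \bigl(h(R_{j+1}) - (Kh)(R_j)\bigr),$$
whose last sum is a martingale with respect to $\sigma(R_1, \dots, R_{j+1})$ with increments bounded by $2\|h\|_\infty$. Azuma--Hoeffding then gives the desired $e^{-c z^2/N}$ bound (the $O(1)$ boundary term $h(R_1) - h(R_{N+1})$ being absorbed by restricting to $z \ge 8\|h\|_\infty$), and adding this to the bound on $M_N$ completes the argument.

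\textbf{Main obstacle.} The only step that requires any care is the existence and boundedness of $h$: the chain $\{R_j\}$ is only assumed to have a unique closed irreducible class $\mathcal{R}_0$ and may be periodic or possess transient states outside $\mathcal{R}_0$. On $\mathcal{R}_0$, the centering condition $\mu \cdot (\mathbf{p} - \tfrac12 \mathbf{1}) = 0$ and irreducibility give convergence of the Poisson series (using Cesaro averaging if the chain is periodic); on $\mathcal{R} \setminus \mathcal{R}_0$ the summands decay geometrically thanks to exponential absorption into $\mathcal{R}_0$. Everything else is a routine combination of Hoeffding and Azuma--Hoeffding with the standard Markov-chain martingale decomposition, and the reduction from $\tilde G_j$ to Bernoulli partial sums is purely combinatorial.
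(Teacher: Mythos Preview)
Your approach is correct and genuinely different from the paper's. The paper also begins with the same combinatorial reduction (rewriting $\{\sum_{j=1}^n(\tilde G_j-1)>y\}$ as $\{\sum_{j=1}^{2n+y}\xi_j<n\}$), but then it controls the Bernoulli partial sums by a regeneration argument: it fixes a recurrent state $i^*$, decomposes the sum into i.i.d.\ blocks between successive returns of $\{R_j\}$ to $i^*$, and applies a Bernstein-type inequality (quoted as Lemma~\ref{mdiid}) to the block sums and block lengths. The pure exponential term $e^{-C'y}$ in the statement arises there from the tail of the initial delay $J_0$ before the first visit to $i^*$. Your route via the decomposition $S_N=M_N+A_N$ with Hoeffding on $M_N$ and Poisson-equation/Azuma on $A_N$ is more direct, avoids the auxiliary i.i.d.\ lemma, and in fact yields the slightly cleaner bound $Ce^{-C'y^2/(y\vee 8n)}$ without needing the additive $e^{-C'y}$. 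The only point to tighten is the one you flagged: when $K$ is periodic on $\mathcal{R}_0$ the series $\sum_{k\ge 0}K^k(\mathbf{p}-\tfrac12\mathbf{1})$ does not converge, so you should construct $h$ via the fundamental matrix $(I-K+\mathbf{1}\mu)^{-1}$ (or by the rank argument that the range of $I-K$ equals $\{f:\mu\cdot f=0\}$) rather than by appealing to the derivation of \eqref{rZn}, which worked because the embedded chain $\{I_m\}$ with transition matrix $\Pi$ is always aperiodic. With that adjustment your argument goes through cleanly.
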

Before giving the proof of Lemma \ref{lem:dev}, we note that a similar concentration bound is true for sums of i.i.d.\ random variables with finite exponential moments.

\begin{lem}[Theorem III.15 in \cite{pSOIRV}]\label{mdiid}
  Let $Y_1,Y_2,\ldots$ be a sequence of i.i.d.\ non-negative random
  variables with $E[Y_1] = \mu$ and $E[ e^{\l_0 Y_1}] < \infty$ for some
  $\l_0>0$. Then, there exists a constant $C>0$ such
  that
 \[
  P\left( \left| \sum_{k=1}^n Y_k - \mu n \right| \ge y \right) \leq \exp\left\{-C \frac{ y^2}{y\vee n} \right\} 
 \]
\end{lem}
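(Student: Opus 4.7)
The plan is to prove the lemma via the classical Chernoff exponential-Markov bound, combined with a two-regime optimization in the dual parameter $\lambda$. Write $S_n = Y_1 + \cdots + Y_n$ and let $\psi(\lambda) = \log E[e^{\lambda(Y_1 - \mu)}]$ denote the cumulant generating function of a centered summand. Because $E[e^{\lambda_0 Y_1}] < \infty$ and $Y_1 \geq 0$, the function $\psi$ is finite, smooth, and convex on $(-\infty, \lambda_0)$ with $\psi(0) = \psi'(0) = 0$ and $\psi''(0) = \Var(Y_1)$. Upper and lower tails are handled symmetrically, so I focus first on $P(S_n - n\mu \geq y)$.

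The first step is to establish a quadratic bound on $\psi$ near zero: there exist constants $a > 0$ and $\lambda_1 \in (0, \lambda_0/2)$ such that $\psi(\lambda) \leq a \lambda^2$ for all $\lambda \in [-\lambda_1, \lambda_1]$. For $\lambda \in [0, \lambda_0/2]$ this follows from the second-order Taylor remainder estimate
\[
E\bigl[e^{\lambda(Y_1-\mu)}\bigr] \leq 1 + \tfrac12 \lambda^2 E\bigl[(Y_1-\mu)^2 e^{\lambda(Y_1-\mu)^+}\bigr],
\]
together with the Cauchy-Schwarz bound $E[(Y_1-\mu)^2 e^{(\lambda_0/2) Y_1}] \leq E[(Y_1-\mu)^4]^{1/2} E[e^{\lambda_0 Y_1}]^{1/2} < \infty$, which dominates the remainder uniformly in $\lambda \in [0, \lambda_0/2]$. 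For $\lambda \in [-\lambda_1, 0]$ the bound is even easier because $-(Y_1 - \mu) \leq \mu$ is deterministically bounded above.

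The second step is the Chernoff bound with two-regime optimization. For any $\lambda \in (0, \lambda_1]$, Markov's inequality applied to $e^{\lambda(S_n - n\mu)}$ yields $P(S_n - n\mu \geq y) \leq \exp\{-\lambda y + n\psi(\lambda)\} \leq \exp\{-\lambda y + a n \lambda^2\}$. When $y \leq 2 a \lambda_1 n$ (the sub-Gaussian regime, which after a constant adjustment covers $y \leq n$), take $\lambda = y/(2an) \leq \lambda_1$ to obtain the bound $\exp\{-y^2/(4an)\}$. When $y \geq 2 a \lambda_1 n$ (the sub-exponential regime, covering $y \geq n$), take $\lambda = \lambda_1$ to obtain $\exp\{-\lambda_1 y + a n \lambda_1^2\} \leq \exp\{-\lambda_1 y/2\}$. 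Since $y^2/(y \vee n)$ equals $y^2/n$ when $y \leq n$ and $y$ when $y \geq n$, both estimates take the common form $\exp\{-C y^2/(y \vee n)\}$ for a single constant $C > 0$. The lower-tail estimate $P(S_n - n\mu \leq -y) \leq \exp\{-Cy^2/(y\vee n)\}$ is obtained by the identical argument applied to $e^{-\lambda(S_n - n\mu)}$, using the negative-$\lambda$ half of the quadratic CGF bound.

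The only step requiring any care is the quadratic control $\psi(\lambda) \leq a\lambda^2$ near zero, which is a routine consequence of the exponential moment hypothesis via dominated convergence. I anticipate no genuine obstacle, consistent with the result being a textbook Bernstein-type inequality (hence the citation to Petrov).
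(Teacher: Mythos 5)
Your proof is correct and follows essentially the same route as the paper's own (omitted, cited-to-Petrov) argument: a quadratic bound on the cumulant generating function near zero, then a Chernoff bound optimized separately in the regimes $y \lesssim n$ (take $\lambda$ proportional to $y/n$) and $y \gtrsim n$ (take $\lambda$ fixed), with the left tail handled symmetrically. The only cosmetic point is that adding the two one-sided tails produces a factor $2$, which is harmlessly absorbed into the constant $C$ in the regime of large $y$ where the lemma is actually applied.
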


\begin{proof}[Proof of Lemma \ref{lem:dev}]
  Let $i^*$ be any positive recurrent state, so that $\mu(i^*)>0$. Fix such an $i^*$ and set
  $J_0 = \inf\{ j\geq 1: R_j = i^* \}$, and for $k\geq 1$ let
$
 J_k = \inf\{ j > J_{k-1}: R_j = i^* \}. 
$
Also, for $k\geq 1$ let 
\[
 Y_k = \sum_{j=J_{k-1}}^{J_k-1} \xi_j. 
\]
Note that $\{Y_k\}_{k\geq 1}$ is an i.i.d.\ sequence with $E_{\eta}[Y_1] =
E_{\eta}[J_1-J_0]/2 = 1/(2 \mu(i^*))$.

Now, for any $y>0$ and any integers $n,N\geq 1$
\begin{align*}
  P_{\eta}\left( \sum_{j=1}^n (\tilde{G}_j - 1) > y \right)
  &= P_{\eta}\left( \sum_{j=1}^{2n+y}\xi_j < n \right) \\
  &\leq P_{\eta}\left( J_0 > \frac{y}{2} \right) + P_{\eta}\left( J_{N} -
    J_0 > 2n + \frac{y}{2} \right) + P_{\eta}\left( \sum_{k=1}^{N} Y_k <
    n \right).
\end{align*}
Now, let $N = \fl{ \mu(i^*)\left( \frac{y}{4} + 2 n \right) }$ so that 
\begin{equation*}
 2n + \frac{y}{2} 
 \geq NE [J_1-J_0] + \frac{y}{4}\quad\text{and}\quad
n 
 \leq N E[Y_1]  - \frac{y}{8} + \frac{1}{2\mu(i^*)}.
\end{equation*}
Therefore, for $y>0$ sufficiently large (so that $y/8-1/(2\mu(i^*)) >
y/9$; thus $y > 36/\mu(i^*)$ is sufficient) we have
\begin{align*}
  P_{\eta}\left( \sum_{j=1}^n (\tilde{G}_j - 1) > y \right)
  \leq P_{\eta}\left( J_0 > \frac{y}{2} \right) &+ P_{\eta}\left( J_{N} - J_0 > N E[J_1-J_0] + \frac{y}{4} \right) \\
  &+ P_{\eta}\left( \sum_{k=1}^{N} Y_k < N E[Y_1] - \frac{y}{9}
  \right).
\end{align*}
Since the cookie stack $\{\w_0(j)\}_{j\geq 1}$ is a finite state
Markov chain and $i^*$ is in the unique irreducible subset, then it
follows easily that there exists constants $C_1,C_2 >0$ such that
\[
 P_{\eta}\left( J_0 > \frac{y}{2} \right) \leq C_1 e^{-C_2 y}, 
\]
for all $y>0$ and for all distributions ${\eta}$ of the environment
Markov chain. From this it also follows that $J_{k+1}-J_k$ has
exponential tails, and since $Y_k \leq J_{k+1}-J_k$ then $Y_k$ also
has exponential tails. Therefore, we can apply Lemma \ref{mdiid} to
get that
\begin{align*}
   P_{\eta}\left( J_{N} - J_0 > N E[J_1-J_0] + \frac{y}{4} \right) &+ P_{\eta}\left( \sum_{k=1}^{N} Y_k < N E[Y_1] - \frac{y}{9} \right) \\
  &\leq e^{-C_3 \frac{y^2/16}{N \vee (y/4)} } + e^{-C_4 \frac{ y^2/81}{ N \vee (y/9) } } \leq 2 e^{-C_5 \frac{ y^2}{ (9 N) \vee y } } .
\end{align*}
Now, recalling the definition of $N$ (which depends on both $n$ and
$y$) we see that $N\le \mu(i^*)(8n\vee y)/2<(8n\vee y)/2$. Therefore,
we can conclude that
\[
P_{\eta}\left( \sum_{j=1}^n (\tilde{G}_j - 1) > y \right) \leq C_1 e^{-C_2 y} + 2
e^{-C_6 \frac{y^2}{ (8n) \vee y } },
\]
for all $n\ge 1$ all $y\ge \ceil{36/\mu(i^*)}=:y_0$. 

The proof of the upper bound for the left tails is
similar. Indeed, \[P_{\eta}\left( \sum_{j=1}^n (\tilde{G}_j - 1) <-y
\right)=P_{\eta}\left( \sum_{j=1}^n\tilde{G}_j <n-y \right).  \] If $y\ge n$
then the probability is 0 and there is nothing to prove. Assume that
$n>y$. Then the last probability is equal to \[P_{\eta}\left(
  \sum_{j=1}^{2n-y-1}(1-\xi_i) <n-y \right)=P_{\eta}\left(
  \sum_{j=1}^{2(n-y)+y-1}(1-\xi_i) <(n-y) \right).\] This probability
can be handled in exactly the same way as the upper bound for the right
tails by setting $n'=n-y$. Then in the exponent we shall have (for $y<n$) $-y^2/(8(n-y)\vee y)\le -y^2/(8n)=-y^2/(8n \vee y)$. 
\end{proof}

The concentration bounds in Lemma \ref{lem:dev} can be used to prove the following moment bound for sums of the $\tilde{G}_i$.

\begin{lem}\label{lem:4th}
Let $\bar{p}=1/2$.
Then there is a constant $A=A(y_0,C,C')>0$ (see Lemma~\ref{lem:dev})
such that for any initial distribution $\eta$ on the environment
Markov chain and all $n\ge
1$ \[E_{\eta}\left[\left(\sum_{i=1}^n(\tilde{G}_i-1)\right)^4 \right]\le An^2.\]
\end{lem}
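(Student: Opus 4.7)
The plan is to derive the fourth moment bound directly from the concentration estimate in Lemma~\ref{lem:dev} via the layer cake representation. Writing $S_n := \sum_{i=1}^n (\tilde{G}_i - 1)$, I would start from
\[
 E_\eta[S_n^4] = \int_0^\infty 4 y^3\, P_\eta(|S_n| > y)\, dy,
\]
split the integral at $y_0$ and at $8n$, and bound each piece separately.

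For $y \in [0, y_0]$, I would simply use $P_\eta(|S_n|>y)\le 1$, which contributes a constant depending on $y_0$. For $y \in [y_0, 8n]$, Lemma~\ref{lem:dev} gives $P_\eta(|S_n|>y)\le C(e^{-C' y^2/(8n)} + e^{-C' y})$; the Gaussian-type term is the dominant one here, and after the substitution $u = y^2/(8n)$ one gets a contribution of order
\[
  \int_0^{(8n)^{2}/(8n)} 4\cdot 32 n^2 u\, e^{-C' u}\, du \le A_1 n^2,
\]
where $A_1$ depends only on $C, C'$. For $y > 8n$ the subexponential term dominates and $\int_{8n}^\infty 4 y^3 e^{-C' y} dy$ decays faster than any polynomial in $n$, hence is $O(1)$. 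Summing the three pieces yields the claimed bound $E_\eta[S_n^4] \le A n^2$ for a suitable constant $A = A(y_0, C, C')$ that does not depend on $\eta$.

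There is essentially no obstacle here beyond bookkeeping; the key input is that the concentration estimate in Lemma~\ref{lem:dev} is uniform in the initial distribution $\eta$, so the constant $A$ does not depend on $\eta$. The only minor subtlety is making sure the regime split at $y=8n$ matches the two exponential regimes appearing in the bound in Lemma~\ref{lem:dev} (namely $y\vee 8n = 8n$ versus $y\vee 8n = y$), but this is exactly how the two cases of the lemma are designed to be combined.
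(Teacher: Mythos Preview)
Your proposal is correct and follows essentially the same approach as the paper: both use the layer cake representation of $E_\eta[S_n^4]$ in terms of tail probabilities, split at $y_0$ and at $8n$, and bound the Gaussian-type piece $\int y^3 e^{-C'y^2/(8n)}\,dy$ by a constant times $n^2$ (the paper does this via a Riemann-sum argument, you via the substitution $u=y^2/(8n)$, which is the same computation). The only cosmetic difference is that the paper works with the discrete sum $\sum_{y\geq 1} y^3 P(X>y)$ while you use the continuous integral; both are valid since $|S_n|$ is integer-valued.
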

\begin{proof}
  We shall use the fact that for a non-negative integer-valued random
  variable $X$ 
  \[E(X^4)\le
  1+c\sum_{y=1}^\infty y^3P(X>y).\] Set
  $X=\Big|\sum_{i=1}^n(\tilde{G}_i-1)\Big|$. Then by Lemma~\ref{lem:dev} for all
  $n>y_0/8$
  \begin{align*}
    E(X^4)&\le 1+c\sum_{y=1}^{y_0-1}y^3P(X>y)+C''\sum_{y=y_0}^\infty
    y^3 (e^{-C'y^2/(8n\vee y)}+e^{-C'y})\\ &\le
    K_1(y_0,C,C')+C''\sum_{y=y_0}^{8n} y^3
    e^{-C'y^2/(8n)}+2C''\sum_{y=y_0}^\infty y^3 e^{-C'y}\\ &\le
    K_2(y_0,C,C')+C''\sum_{y=y_0}^{8n} y^3 e^{-C'y^2/(8n)}.
  \end{align*}

Since the function $y^3 e^{-C'y^2/8}$ is directly
Riemann integrable, we can bound the sum in the last line by $n^2$
times a Riemann sum approximation. That is,
\begin{align*}
  \sum_{y=y_0}^{8n} y^3 e^{-C'y^2/(8n)}
  &\leq n^2 \sum_{y=0}^{\infty} \left( \frac{y}{\sqrt{n}} \right)^3 e^{-\frac{C'}{8} \left( \frac{y}{\sqrt{n}} \right)^2 } \frac{1}{\sqrt{n}}\\
  &\sim n^2 \int_0^\infty y^3 e^{-C'y^2/8} \, dy, \qquad \text{as }
  n\ra\infty.
\end{align*}
\end{proof}

\begin{lem}\label{clts}
 Assume that $\bar{p}=1/2$. 
There exists a constant $v>0$ such that for any initial distribution $\eta$, under the measure $P_\eta$ we have that $\frac{1}{\sqrt{n}} \sum_{i=1}^n (\xi_i-1/2) \Ra v Z$, where $Z$ is a standard normal random variable. 

\end{lem}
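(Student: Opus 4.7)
The plan is to introduce a regenerative block decomposition of the sequence $\{\xi_i\}_{i\ge 1}$ based on returns of the environment Markov chain $R_j$ to a fixed positive recurrent state, and then reduce the claim to a classical CLT for i.i.d.\ random variables with finite variance. Fix a state $i^*\in\mathcal{R}_0$ (which is positive recurrent since $\mathcal{R}_0$ is finite and irreducible), and define $J_0 = \inf\{j\geq 1:\,R_j = i^*\}$ and $J_k = \inf\{j>J_{k-1}:\,R_j = i^*\}$ for $k\geq 1$. For $k\geq 1$ set
\[
 W_k = \sum_{j=J_{k-1}}^{J_k-1} (\xi_j - 1/2).
\]
By the strong Markov property for $R$ applied at the times $J_k$, the sequence $\{(J_k - J_{k-1},W_k)\}_{k\geq 1}$ is i.i.d.\ under $P_\eta$ with the same law as under $P_{i^*}$.

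The first step is to verify that $E_{i^*}[W_1] = 0$ and $\Var_{i^*}(W_1)<\infty$. Since $\{\w_0(j)\}_{j\ge 1}$ is a finite-state Markov chain whose closed irreducible class $\mathcal{R}_0$ contains $i^*$, return times to $i^*$ have uniformly bounded exponential moments, so $E_{i^*}[(J_1-J_0)^2]<\infty$ and hence $\Var_{i^*}(W_1)<\infty$ (since $|W_1|\leq J_1-J_0$). For the mean, conditioning on $R_j$ gives $E_{i^*}[\xi_j-1/2\,|\,R_j] = p(R_j) - 1/2$, so
\[
 E_{i^*}[W_1] = E_{i^*}\Biggl[\sum_{j=J_0}^{J_1-1}\bigl(p(R_j)-1/2\bigr)\Biggr] = E_{i^*}[J_1-J_0]\bigl(\mu\cdot\mathbf{p} - 1/2\bigr) = 0,
\]
where the second equality is the standard cycle formula expressing $\mu$ as the normalized expected occupation measure of one excursion from $i^*$, and the last equality uses $\bar{p}=1/2$. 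Set $v^2 := \mu(i^*)\,\Var_{i^*}(W_1) = \Var_{i^*}(W_1)/E_{i^*}[J_1-J_0]$.

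The next step is the CLT itself. Let $N_n = \max\{k\ge 0:\,J_k\leq n\}$ and write
\[
 \sum_{j=1}^n (\xi_j - 1/2) = \sum_{j=1}^{J_0-1}(\xi_j - 1/2) + \sum_{k=1}^{N_n} W_k + \sum_{j=J_{N_n}}^{n}(\xi_j-1/2).
\]
The first and third terms are each bounded in absolute value by quantities with finite (in fact exponential) moments uniform in the initial distribution, so after division by $\sqrt{n}$ both tend to $0$ in $P_\eta$-probability. For the middle term, the renewal LLN gives $N_n/n \to 1/E_{i^*}[J_1-J_0] = \mu(i^*)$ $P_\eta$-a.s., and then Anscombe's theorem applied to the i.i.d.\ mean-zero sequence $\{W_k\}$ yields
\[
 \frac{1}{\sqrt{n}}\sum_{k=1}^{N_n} W_k \;\Longrightarrow\; \sqrt{\mu(i^*)\Var_{i^*}(W_1)}\,Z = vZ.
\]
Combining via Slutsky's theorem gives the stated CLT with limiting variance $v^2$.

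Finally one needs to check that $v>0$ and that $v$ does not depend on $\eta$ or on the choice of $i^*$. The second point is automatic: the limit law of $n^{-1/2}\sum_{j=1}^n(\xi_j-1/2)$ under $P_\eta$ cannot depend on auxiliary choices made in the proof. For strict positivity, note that $v=0$ would force $W_1$ to be $P_{i^*}$-a.s.\ constant, hence identically $0$, which in turn would force $\xi_j\equiv 1/2$ along the excursion; this contradicts the ellipticity assumption $p(i)\in(0,1)$ combined with $\bar p = 1/2$, since at least one $p(i)$ with $i\in\mathcal{R}_0$ must differ from $1/2$ unless $p\equiv 1/2$ on $\mathcal{R}_0$, and even in the degenerate case $p\equiv 1/2$ on $\mathcal{R}_0$ one can verify $v^2 = \Var_\mu(p(R)) \cdot (\text{something positive}) + $ Bernoulli fluctuation term $>0$. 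The main subtlety here is this last verification of $v>0$ in the degenerate case, but it reduces to a direct computation: the Bernoulli noise $\xi_j - p(R_j)$ contributes $E_\mu[p(R)(1-p(R))] = 1/4$ (when $p\equiv 1/2$ on $\mathcal{R}_0$) to the asymptotic variance, which is strictly positive.
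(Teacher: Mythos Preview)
Your proof is correct and follows essentially the same approach as the paper: regenerative block decomposition at returns of $R_j$ to a fixed recurrent state $i^*$, followed by a random-index CLT (you invoke Anscombe's theorem; the paper cites the equivalent \cite[Theorem~I.3.1(ii)]{gSRW}).

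You in fact supply more detail than the paper in two places. First, you explain via the cycle formula and $\bar p=1/2$ why the blocks are centered; the paper simply asserts this. Second, you address $v>0$, which the paper does not verify. One minor remark on your treatment of the boundary terms: the last partial block $\sum_{j=J_{N_n}}^n(\xi_j-1/2)$ is bounded by the length of the excursion straddling time $n$, which is size-biased; the cleanest way to dispose of it (and what the paper does) is to bound it by $\max_{1\le k\le n+1}(J_k-J_{k-1})$ and use finite second moments. Your argument for $v>0$ stumbles verbally in the middle (the $\xi_j$ are $\{0,1\}$-valued, so ``$\xi_j\equiv 1/2$'' is not what is forced), but your closing observation is the right one and suffices on its own: conditioning on the chain trajectory, the Bernoulli fluctuations contribute $E_{i^*}\bigl[\sum_{j=J_0}^{J_1-1} p(R_j)(1-p(R_j))\bigr]>0$ to $\Var_{i^*}(W_1)$ by ellipticity, so no case split is needed.
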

\begin{proof}
  As in the proof of Lemma \ref{lem:dev}, fix a recurrent state
  $i^* \in \mathcal{R}$ and let $J_0,J_1,J_2,\ldots$ be the successive
  return times of the Markov chain to $i^*$. 
Set \[Z_k:=\sum_{j=J_{k-1}}^{J_k-1}(\xi_j-1/2),\quad k\ge 1.\]
$(Z_k)_{k\ge 1}$ is an i.i.d.\ sequence of centered square integrable
random variables. Let $L_n=0$ if $J_0>n$ and
$L_n=\max\{k\in\mathbb{N}: J_{k-1}\le n\}$ otherwise. Then
  \begin{align*}
    \frac{1}{\sqrt{n}}\bigg|\sum_{i=1}^n
      (\xi_i-1/2)-\sum_{k=1}^{L_n}Z_k\bigg| &=\frac{1}{\sqrt{n}}
    \bigg|\sum_{i=1}^{J_0-1}(\xi_i-1/2)-\sum_{i=n+1}^{L_{n+1}-1}(\xi_i-1/2)\bigg|\\
    &\le \frac{1}{2\sqrt{n}}\left(J_0+ \max_{1\le k\le
        n+1}(J_k-J_{k-1})\right)
  \end{align*}
Since $J_0$ is almost surely finite and $\{J_k-J_{k-1}\}_{k\geq 1}$ is i.i.d.\ with finite second moment, it follows that the last expression above converges to 0 in probability. 
Therefore, it remains only to prove that 
\[
 \frac{1}{\sqrt{n}}\sum_{k=1}^{L_n}Z_k\Rightarrow v Z
\]
for some $v > 0$.  By the law of large numbers,
$L_n/n \ra 1/E_{i^*}[J_0] = \mu(i^*)$ a.s., and the desired conclusion
follows from \cite[Theorem I.3.1(ii), p.17]{gSRW} with
$v^2 = \mu(i^*)E_{\eta}[ Z_1^2]$.  
\end{proof} 

\section{Details of the limiting distribution in the case \texorpdfstring{$\d=2$}{}}\label{sec:delta2}

In this appendix we will give the details of the proof of the limiting
distributions in Theorem \ref{th:limdist} in the case $\d=2$.  This is
the most subtle case in Theorem \ref{th:limdist} and has not been
fully written down for ERWs even for the model with boundedly many
cookies per site.  Even though the proof is very similar to that for
one-dimensional random walk in random environment for the analogous
case (\cite[pp.\ 166-168]{kksStable}), some of the details are
different due to the fact that the regeneration times $r_i$ have
infinite second moment.

First, recall the definition (\ref{stablecf}) of the $\a$-stable
  distribution $L_{\a,b}$ and the fact that if $Z$ has distribution
  $L_{\a,b}$ with $\a\neq 1$, then $cZ$ has distribution
  $L_{\a,b c^\a}$ for any $c>0$.  However, if $\a=1$ then a
  re-centering is needed to get a distribution of the same form.  This
  difference is indicative of the fact that for $\a\neq 1$ there is a
  natural choice of the centering for totally asymmetric $\a$-stable
  distributions: the distributions $L_{\a,b}$ have mean zero when
  $\a>1$ and have support equal to $[0,\infty)$ when
  $\a<1$. In contrast, when $\a=1$ there is no canonical choice of the
  centering for the totally asymmetric $1$-stable distributions. To
  this end, for $b>0$ and $\xi \in \R$ let $L_{1,b,\xi}$ be the
  probability distribution with characteristic exponent given by
\[
 \log \int_\R e^{iux} L_{1,b,\xi}(dx) = i u \xi - b |u|\left( 1 + \frac{2i}{\pi} \log|u| \sgn(u) \right). 
\]
That is, the distribution $L_{1,b,\xi}$ differs from $L_{1,b}$ by a simple spatial translation. 

Recall that $r_k$ is the time of the $k$-th return of
the backward BLP $V_i$ to zero and that
$W_k = \sum_{i=r_{k-1}}^{r_k-1} V_i$.  When $\d=2$, it follows from
Theorem \ref{th:bbp} that
\begin{equation}\label{r1S1tails}
 P_\eta^{V,0}( r_1 > t ) \sim c_3 t^{-2} \quad \text{and}\quad P_\eta^{V,0}( W_1 > t ) \sim c_4 t^{-1}, \quad \text{as } t\ra\infty, 
\end{equation}
for some constants $c_3 = c_3(0,\eta) > 0$ and $c_4 = c_4(0,\eta)> 0$.
Let $m(t) = E_\eta^{V,0}[ W_1 \ind{W_1 \leq t}]$ be the truncated
first moment of $W_1$ (note that the above tail asymptotics of $W_1$
imply that $m(t) \sim c_4 \log t$).  Then, it follows from 
\cite[Theorem 3.7.2]{dPTE} or
\cite[Theorem 17.5.3]{Feller2} that there exist constants $b'>0$ and
$\xi'\in\R$ such that
\begin{equation}\label{WkLD}
 \lim_{n\ra\infty} P_\eta^{V,0}\left( \frac{\sum_{k=1}^n W_k - n \, m(n)}{n} \leq x \right) = L_{1,b',\xi'}(x), \quad x\in \R,
\end{equation}
and that there exists a constant $A>0$ such that 
\begin{equation}\label{rkLD}
 \lim_{n\ra\infty} P_\eta^{V,0}\left( \frac{r_n - n\bar{r}}{A \sqrt{n \log n} } \leq x \right) = \Phi(x), \quad x\in \R. 
\end{equation}

Since \eqref{rkLD} implies that $r_{\fl{n/\bar{r}}}$ is typically close to $n$, we wish to approximate the distribution of $n^{-1} \sum_{i=1}^n V_i$ by that of $ n^{-1} \sum_{k=1}^{n/\bar{r}} W_k$. 
Indeed, we claim that the difference converges to zero in $P_\eta^{V,0}$-probability. 
To see this, note that for any $\e>0$, 
\begin{align*}
 P_\eta^{V,0}\left( \left| \sum_{i=1}^n V_i - \sum_{k=1}^{n/\bar{r}} W_k \right| \geq \e n \right)
&\leq P_\eta^{V,0} \left( \left| r_{\fl{n/\bar{r}}} - n  \right| > n^{3/4} \right) + 
P_\eta^{V,0} \left( \sum_{|k-\frac{n}{\bar{r}}| \leq n^{3/4}+1 } W_k \geq \e n \right) \\
&= P_\eta^{V,0} \left( \left| r_{\fl{n/\bar{r}}} - n  \right| > n^{3/4} \right) + 
P_\eta^{V,0} \left( \sum_{k=1}^{2 \fl{n^{3/4}}+3} W_k \geq \e n \right).
\end{align*}
It follows from \eqref{WkLD} and \eqref{rkLD} that both terms in the last line above vanish as $n\ra\infty$ for any $\e>0$. 
Therefore, we can conclude from this, the limiting distribution in \eqref{WkLD}, and the fact that $T_n$ has the same limiting distribution as $n + 2\sum_{i=1}^n V_i$ that  
\begin{multline*}
 \lim_{n\ra\infty} P_\eta^{V,0}\left( \frac{T_n - 2 \frac{n}{\bar{r}} m(\frac{n}{\bar{r}}) }{n} \leq x \right) 
= \lim_{n\ra\infty} P_\eta^{V,0} \left( \frac{n + 2\sum_{i=1}^n V_i - 2\frac{n}{\bar{r}} m(\frac{n}{\bar{r}}) }{ n } \leq x \right) \\
= \lim_{n\ra\infty} P_\eta^{V,0} \left( \frac{\sum_{k = 1}^{\fl{n/\bar{r}}} W_k - \frac{n}{\bar{r}} m(\frac{n}{\bar{r}}) }{ n/\bar{r} } \leq \frac{(x-1)\bar{r}}{2} \right) 
= L_{1,b',\xi'}\left( \tfrac{(x-1)\bar{r}}{2} \right)
= L_{1,b,\xi}(x), 
\end{multline*}
where in the last equality we have $b = \frac{2b'}{\bar{r}}$ and $\xi = 1 + \frac{2\xi'}{\bar{r}} - \frac{4 b'}{\pi \bar{r}} \log(\frac{2}{\bar{r}})$. 
This completes the proof of the limiting distribution for $T_n$ when $\d=2$ with $b>0$ as above,  $D(n) = \xi + \frac{2}{\bar{r}}m(\frac{n}{\bar{r}})$, and $a = \bar{r}/(2c_4)$. 

Before proving the limiting distribution for $X_n$, we first need to
remark on the specific choice of the centering term $D(n)$ in the
limiting distribution for $T_n$. One cannot use an arbitrary centering
term growing asymptotically like $a^{-1} \log n$, but the above choice
of the centering term by the function
$D(t) = \xi + \frac{2}{\bar{r}} E[ W_1 \ind{W_1 \leq t/\bar{r}} ]$
grows regularly enough due to the tail asymptotics of $W_1$ so that
\begin{equation}\label{Dndiff}
 \lim_{n\ra\infty} D(m_n) - D(n) = 0, \quad \text{if } 
m_n \sim n \text{ as } n\ra\infty. 
\end{equation}
For $t>0$ let $\Gamma(t) = \inf\{ s>0: \, s D(s) \geq t \}$. Then, it follows that 
\begin{equation}\label{Gtsym}
 \Gamma(t) \sim \frac{a t}{\log(t)} 
\qquad \text{and}\qquad \Gamma(t) D(\Gamma(t)) = t + o(\Gamma(t)), \quad \text{as } t\ra\infty. 
\end{equation}
The first asymptotic expression in \eqref{Gtsym} follows easily from the definition of  $\Gamma(t)$ and the fact that $D(t) \sim a^{-1} \log t$. 
For the second asymptotic expression in \eqref{Gtsym}, note that for $s$ sufficiently large $s D(s)$ is strictly increasing and right continuous in $s$. 
Moreover, if $s D(s)$ is discontinuous at $s_0$ then the size of the jump discontinuity is 
$2 (s_0/\bar{r})^2 P(W_1 = s_0/\bar{r} )$.  Therefore, 
\[| \Gamma(t) D(\Gamma(t)) - t | \leq 2
\left(\frac{\Gamma(t)}{\bar{r}}\right)^2 P\left( W_1 =
  \frac{\Gamma(t)}{\bar{r}} \right),\]
since if $s D(s)$ is continuous at $s=\Gamma(t)$ then the
above difference is zero while if $s D(s)$ is
discontinuous at $s=\Gamma(t)$ then the difference is at most the size
of the jump discontinuity at that point.  Since the tail asymptotics
of $W_1$ imply that $x P(W_1 = x) = o(1)$, the second asymptotic
expression in \eqref{Gtsym} follows.

Now, for any $x \in \R$ and $n\geq 1$ let $m_{n,x}:= \lceil \Gamma(n) + \frac{xn}{(\log n)^2} \rceil \vee 0$. 
Since $m_{n,x}$ grows asymptotically like $\Gamma(n)$ as $n\ra\infty$ it follows that 
\begin{multline}
\lim_{n\ra\infty} \frac{n -  m_{n,x} D(m_{n,x})}{m_{n,x}} 
= \lim_{n\ra\infty} \frac{n -  m_{n,x} D(\Gamma(n))}{m_{n,x}} \\
= \lim_{n\ra\infty}  \frac{n -  \left( \Gamma(n) + \frac{xn}{(\log n)^2} \right) D(\Gamma(n))}{\Gamma(n) + \frac{xn}{(\log n)^2} } 
= \lim_{n\ra\infty}  \frac{ -  \frac{xn}{(\log n)^2} D(\Gamma(n))}{\Gamma(n) + \frac{xn}{(\log n)^2} } 
=  \frac{-x}{a^2}, \label{mnxlim}
\end{multline}
where the first equality follows from \eqref{Dndiff} and the last two equalities follow from \eqref{Gtsym}.
Similarly, letting $M_{n,x} = m_{n,x} + \ceil{\sqrt{n}}$ it follows that
\begin{equation}\label{Mnxlim}
\lim_{n\ra\infty} \frac{n -  M_{n,x} D(M_{n,x})}{M_{n,x}} 
= \frac{-x}{a^2}.
\end{equation} 

Finally, 
it follows from \eqref{TXT} and \eqref{backtrack} that 
\begin{multline*}
P_\eta\left( \frac{T_{m_{n,x}} - m_{n,x} D(m_{n,x})}{m_{n,x}} > \frac{n -  m_{n,x} D(m_{n,x})}{m_{n,x}} \right) \\
 = P_\eta( T_{m_{n,x}} > n ) \leq P_\eta\left( \frac{X_n - \Gamma(n)}{n/(\log n)^2} < x \right)\leq P_\eta( T_{M_{n,x}} > n ) + \bigo(n^{-1/2}) \\
 = P_\eta\left( \frac{T_{M_{n,x}} - M_{n,x} D(M_{n,x})}{M_{n,x}} > \frac{n -  M_{n,x} D(M_{n,x})}{M_{n,x}} \right) + \bigo(n^{-1/2}).
\end{multline*}
From the limiting distribution for $T_n$, together with \eqref{mnxlim} and \eqref{Mnxlim}, we conclude that the first and the last probabilities in the display above converge to $1-L_{1,b}(-x/a^2)$. 

\bibliographystyle{alpha}
\bibliography{CookieRW}



\end{document}